\newcommand{\N}{\mathbb{N}}
\newcommand{\Z}{\mathbb{Z}}
\newcommand{\C}{C_{\mathrm{M}}}
\newcommand{\D}{D_{\mathrm{M}}}
\newcommand{\setC}{\mathcal{C}}
\newcommand{\setD}{\mathcal{D}}
\newcommand{\X}{\mathcal{X}}
\newcommand{\setI}{\mathcal{I}}
\newcommand{\setO}{\mathcal{O}}
\newcommand{\setS}{\mathcal{S}}
\newcommand{\setJ}{\mathcal{J}}
\newcommand{\norm}[1]{\left\lVert#1\right\rVert}
\newcommand{\ceil}[1]{\left\lceil#1\right\rceil}
\newcommand{\floor}[1]{\left\lfloor#1\right\rfloor}
\newcommand{\x}{\bm{x}}
\newcommand{\y}{\bm{y}}
\newcommand{\z}{\bm{z}}
\newcommand{\tb}{\textcolor{blue}}
\newcommand{\tr}{\textcolor{red}}
\newcommand{\tg}{\textcolor{ForestGreen}}
\DeclareMathOperator*{\argmin}{arg\,min}
\crefname{hypothesis}{Hypothesis}{Hypotheses}
\title{Heterogeneous Mixed Populations of Coordinating, Anticoordinating, and Imitating Individuals\thanks{
%
Submitted to the editors DATE.
\funding{This project was partly funded by Alberta Environment and Parks. 
We are also grateful for the generous funding provided by Prof. Russell Greiner.}}}
\author{Hien Le\thanks{Faculty of Science, University of Alberta, Canada  
  (\email{hienthut@ualberta.ca}).}
  \and Mohaddeseh Rajaee\thanks{Department of Electrical and Computer Engineering, Isfahan University of Technology, Iran(\email{mohaddesehrajaee@gmail.com}).}
\and Pouria Ramazi\thanks{Department of Mathematics and Statistics, Brock University, Canada 
  (\email{p.ramazi@gmail.com}, \email{pramazi@brocku.ca}).}}
\newcommand*{\addFileDependency}[1]{
  \typeout{(#1)}
  \@addtofilelist{#1}
  \IfFileExists{#1}{}{\typeout{No file #1.}}
}
\newcommand*{\myexternaldocument}[1]{%
    \externaldocument{#1}%
    \addFileDependency{#1.tex}%
    \addFileDependency{#1.aux}%
}
\begin{document}
\nolinenumbers
\maketitle

\begin{abstract}
  Decision-making individuals are typically either an \emph{imitator}, who mimics the action of the most successful individual(s), 
    a \emph{conformist} (or \emph{coordinating individual}), who chooses an action if enough others have done so, or a \emph{nonconformist} (or \emph{anticoordinating individual}), who chooses an action if few others have done so. 
    Researchers have studied the asymptotic behavior of populations comprising one or two of these types of decision-makers, but not altogether, which we do for the first time. 
    We consider a population of heterogeneous individuals, each either \emph{cooperates} or \emph{defects}, and earns payoffs according to their possibly unique payoff matrix and the total number of cooperators in the population.
    Over a discrete sequence of time, the individuals revise their choices asynchronously based on the \emph{best-response} or \emph{imitation} update rule.
    Those who update based on the best-response are a conformist (resp. nonconformist) if their payoff matrix is that of a coordination (resp. anticoordination) game.
    We take the distribution of cooperators over the three types of individuals with the same payoff matrix as the state of the system.
    First, we provide our simulation results, showing that a population may admit zero, one or more equilibria at the same time, and several non-singleton minimal positively invariant sets. 
    Second, we find the necessary and sufficient condition for equilibrium existence.
    Third, we perform stability analysis and find that only those equilibria where the imitators either all cooperate or all defect are likely to be stable. 
    Fourth, we proceed to the challenging problem of characterizing the minimal positively invariant sets and find conditions for the existence of such sets. Finally, we study the stochastic stability of the states under the perturbed dynamics, where the agents are allowed to make mistakes in their decisions with a certain small probability.  
\end{abstract}

\begin{keywords}
    Heterogeneous population, evolutionary game theory, population dynamics, best-response, imitation, anticoordinating, coordinating, convergence analysis\end{keywords}

\begin{AMS}
  91A06, 91A22, 91A26, 91A50, 91B50, 91B69, 74G10, 93C55, 93C10, 93D05
\end{AMS}

\section{Introduction}
Decision-making individuals typically take action based on either the success or frequency of their fellows' choices \cite{van2015focus}.
The first group simply imitates the choice of successful others, referred to as \emph{imitators}. 
The second, decide based on how many others have chosen a particular action:
some choose an action if enough others have already taken that action, referred to as \emph{conformists} or \emph{coordinating individuals}, whereas some choose an action if few others have done so, referred to as \emph{nonconformists} or \emph{anticoordinating individuals}.
Whether others choosing a particular action are ``enough'' or ``few'' for the individual to choose that action is determined by her \emph{threshold} or the so called \emph{temper}.
All three types are common in both human societies and nature \cite{cunningham2002empirical,smith1974theory,zhou2006innovation}.
In technology markets, conformists invest in common products to avoid risk, nonconformists develop rare products to benefit from a monopoly \cite{collins2015}, and imitators follow highest-earning firms \cite{bursztyn2014understanding}.
When deciding on whether to vaccine their infants, some mothers (conformists) promote vaccination as they do not want to diverge from the cultural norm; others (nonconformists) inhibit vaccination as they feel that their child is not at risk since other children are vaccinated; and some (imitators) may be in touch with an influential homeopath/naturopath \cite{allen2010parental}.
The fundamental questions regarding populations consisting of these types of decision-makers are concerned with their asymptotic collective behavior.
More specifically, is it possible for the population to eventually settle and reach an equilibrium where all individuals are satisfied with their decisions? If yes, how stable is the equilibrium to changes in the decisions?
More importantly, if the population never settles, and undergoes perpetual fluctuations, can we characterize the fluctuations, e.g., find their lengths?

\emph{Evolutionary game theory} models a decision-making process by a population of interactive agents who choose between typically two strategies \emph{cooperation} and \emph{defection}, accordingly earn payoffs based on their payoff matrices, and revise their strategies based on some update rule \cite{sandholm2010population}.
Imitators update based on the \emph{imitation update rule} and choose the strategy of the individual(s) with the highest payoff, and the other two types update based on the \emph{best-response update rule} and choose the strategy that maximizes their payoffs.
Based on their payoff matrices, best-responders are either a conformist or nonconformist, and are modeled by \emph{threshold models}, where a conformist chooses cooperation if the total number of cooperators in the population exceeds her threshold and \emph{vice versa} \cite{riehl2018survey}.
Populations of homogeneous individuals \cite{ramazi2014stability}, who have the same payoff matrix or thresholds, are analyzed under both best-response  \cite{lelarge2012diffusion,young2011dynamics,oyama2015sampling,alos2003finite, kreindler2013fast} and imitation update rules \cite{cimini2015dynamics, govaert2017convergence}. 
Heterogeneous population of conformists, and heterogeneous population of nonconformists are both known to equilibrate and the same holds for imitators with the \emph{coordination} payoff matrix \cite{ramazi2016networks,riehl2018survey, ramazi2018asynchronous}.
However, imitators earning according to the \emph{anticoordination} payoff matrix are reported and proven to oscillate in their strategies under a variety of conditions \cite{imitation,henderson2016alternative}.
Recently, mixed populations of nonconformists and imitators who earn based on the anticoordination payoff matrix are shown to equilibrate if and only if they admit an equilibrium \cite{le2020heterogeneous}.
We have performed stability analysis for population dynamics in \cite{ramazi2018asynchronous} and later \cite{le2020heterogeneous}.
No study has reported the asymptotic behavior of mixed-populations of conformists and nonconformists. 
More generally, it remains open whether mixed-populations of all three types of decision-makers, and under both the coordination and anticoordination payoff matrices, admit a stable or unstable equilibrium.
Moreover, none of the above studies have addressed the challenging problem of characterizing the oscillatory behavior of the population dynamics. 

Another branch of research in evolutionary game theory considers agents that are prone to tremble randomly in decision making. 
This perturbation changes the chance of different outcomes to be observed in the long run. 
Those states that are robust to the perturbation in the dynamics when the trembling rate vanishes are called \emph{stochastically stable} states. T
he notion of stochastic stability, first studied in~\cite{kandori1993learning,foster1990stochastic,young1993evolution}, provides also a refinement tool for dynamics with multiple minimal invariant sets. 
The stochastic stability analysis for a certain type of homogeneous population of conformists is performed in~\cite{kandori1993learning}, where authors provide conditions under which the only stochastically stable state is the risk dominant equilibrium. 
Ellison generalizes this result to homogeneous populations with arbitrary number of strategies for agents to play~\cite{ellison2000basins}. 
The stochastic stability of mixed populations of imitators and best-responders remains concealed.

We consider a mixed-population of conformists, nonconformists, and imitators who decide between cooperation and defection and revise their choices over a discrete sequence of time. 
We follow the evolutionary game theory framework where individuals earn according to their payoff matrices, which can be unique to each individual, resulting in a heterogeneous population, and can be both of the coordination and anticoordination game. 
First, we provide numerical examples and show that the resulting population dynamics can admit zero or more equilibria as well as a set where the dynamics perpetually fluctuate between several states, which turns out to be a non-singleton minimal positively invariant set.
Then we find the necessary and sufficient condition for equilibrium existence and identify all possible equilibria that the dynamics may possess.
We interpret the results by introducing the notion of \emph{cooperation-preserving} groups who tend to cooperate if they are the only cooperators in the population.
Next, we adjust the notion of equilibrium stability for general discrete population dynamics and find the necessary and sufficient condition for an equilibrium to be stable.
We find that only those extreme equilibria where all imitators defect or all cooperate are likely to be stable. 
Next, we proceed to characterize the minimal positively invariant sets.
We approximate the sets by finding necessary conditions for a general set to be positively invariant. 
We also introduce a set that can be invariant, which helps in limiting our search when looking for a minimal positively invariant set.  
Finally, we study the situation when the agents may tremble and make errors in their decisions with a certain small probability, resulting in perturbed dynamics. We investigate the stochastically stable states and find that, under specific conditions, for mixed binary-type populations, if there exist stochastically stable equilibria, an extreme equilibrium has to be one.

Our results reveal the possible outcomes of one of the most general population dynamics considered in the literature. According to the stability analysis, we may not expect a population to both settle and preserve the diversity of strategies among the imitators, unless small deviations in the imitators' strategies at that settling state result in another settling state. The stochastic stability analysis on the other hand, imply that if an equilibrium consisting of both imitating cooperators and defectors is persistently visited under agents' trembles, so does an extreme equilibrium. The results, therefore, highlights the``robustness" of extreme equilibria under perturbations.  
Moreover, the analysis on the invariant sets allows us to estimate their cardinality and other properties such as the minimum and the maximum number of cooperators in the set. Our results open the door to future convergence analysis of the population dynamics. 

\section{Model}\label{sec:model}
We consider a well-mixed population of $n$ agents playing 2-player games over a discrete time sequence.
At each time $t= 0, 1,\ldots$, every agent $i\in \{1, \ldots, n\}$ plays either \emph{cooperation} ($C$) or \emph{defection} ($D$) and accordingly earns a payoff against each of her opponents based on the payoff matrix
\begin{equation} \label{pmatrix}
\pi_{i}=
\begin{blockarray}{ccc}
& C & D\\
\begin{block}{c(cc)}
  C & R_{i} & S_{i}\\
  D & T_{i} & P_{i}, \\
\end{block}
\end{blockarray}, \qquad (R_i+ P_i) \neq (T_i+ S_i),
\end{equation}
where $R_{i}, S_{i}, T_{i}$ ad $P_{i}$ are the payoffs of $C$-against-$C$, $C$-against-$D$, $D$-against-$C$ and $D$-against-$D$. 
Denote the strategy of agent $i$ by $s_i\in\{C, D\}$
and the total number of cooperators in the population by $n^C$.
Then the total payoff (utility) of agent $i$ against the population is
\begin{equation*}
    \begin{cases}
        u_{i}^{C} = n^{C}(R_{i}-S_{i})+nS_{i} &\text{if } s_i = C \\
        u_{i}^{D} = n^{C}(T_{i}-P_{i})+nP_{i} &\text{if }
        s_i = D
    \end{cases},
\end{equation*}
where $u^C_i$ and $u^D_i$ are the utility of agent $i$ upon playing cooperation and defection.
At each time $t$, an agent becomes active to revise her strategy at time $t+1$ based on either the \emph{best-response} or \emph{imitation update rule}.
Each agent follows only one of these two over time, and is correspondingly called a \emph{best-responder} or an \emph{imitator}.
If agent $i$ is a best-responder, she chooses the strategy that maximizes her utility; should cooperation and defection provide her the same utility, she would stick to her current strategy.
Namely, upon activation at time $t$, agent $i$ updates her strategy as
\begin{equation*} \label{general_br_update_rule}
    s_{i}^B\left(t+1\right)=
    \begin{cases}
        C
        &\text{if } u_{i}^{C}\left(t\right) > u_{i}^{D}\left(t\right)\\
        D
        &\text{if } u_{i}^{D}\left(t\right) > u_{i}^{C}\left(t\right)\\
        s_{i}^B\left(t\right) &\text{if } u_{i}^{D}\left(t\right) = u_{i}^{C}\left(t\right)
\end{cases},
\end{equation*}
where the superscript $B$ in $s_i^B$ indicates that agent $i$ is a best-responder.
By defining the \emph{temper} of agent $i$ as $\tau_i \triangleq n(P_i-S_i)/(R_i+P_i-S_i-T_i)$, the above equation can be simplified as follows when $R_i+P_i-S_i-T_i > 0$:
\begin{equation} \label{abr_update_rule}
    s_{i}^B\left(t+1\right)=
    \begin{cases}
        C
        &\text{if } n^C(t) > \tau_i\\
        D
        &\text{if } n^C(t) < \tau_i\\
        s_{i}^B\left(t\right) &\text{if } n^C(t) = \tau_i
\end{cases}.
\end{equation}
We call an agent following this update rule a {conformist} or coordinating agent as she coordinates her strategy with `those considerable in number'; that is, she cooperates (resp. defects) if the number of cooperators (resp. defectors) is greater (resp. less) than her temper.
Similarly, if $R_i+P_i-S_i-T_i < 0$, the update rule is simplified to
\begin{equation} \label{cbr_update_rule}
    s_{i}^B\left(t+1\right)=
    \begin{cases}
        C
        &\text{if } n^C(t) < \tau_i\\
        D
        &\text{if } n^C(t) > \tau_i\\
        s_{i}^B\left(t\right) &\text{if } n^C(t) = \tau_i
\end{cases}.
\end{equation}
We refer to an agent following this rule a {nonconformist} or anticoordinating agent as she chooses the opposite of the strategy of `those considerable in number'.
So a best-responder is either a conformist or an nonconformist\footnote{
If the equality in \eqref{pmatrix} takes place, then either $u_i^D(t) = u_i^C(t)$ for all $t$, or one of the strategies becomes dominant and is always favored. 
In either case, agent $i$ fixes her strategy the first time she updates and can be modelled by both a conformist and an nonconformist with $\tau\in\{-0.5,n+0.5\}$.}.
An imitator, on the other hand, ignores her own utility and copies the strategy of the agent earning the highest utility; if there are two highest-earners with two different strategies, she will not switch strategies.
So upon activation at time $t$, agent $i$ who is an imitator updates her strategy as
\begin{equation} \label{general_im_update_rule}
    s_{i}^I(t+1)=
    \begin{cases}
        C &\text{if } \C(t) > \D(t) \\
        D & \text{if } \C(t) < \D(t) \\
        s_{i}^I(t) &\text{if } \C(t) = \D(t)
    \end{cases}, 
\end{equation}
where the superscript $I$ in $s_i^I$ indicates that agent $i$ is an imitator, and $\C(t)$ and $\D(t)$ are the utility of the highest-earning cooperator and defector at time $t$:
\begin{gather*}
    \C(t) = \sup_{i \in \setC(t)} u_i^C(t)\quad \setC(t) = \{j \in \{1, \ldots, n\}\,|\,s_j(t) = C\}, \\
    \D(t) = \sup_{i \in \setD(t)} u_i^D(t) \quad\!\! \setD(t) = \{j \in \{1, \ldots, n\}\,|\, s_j(t) = D\},
\end{gather*}
where given a set $\setS$, $\sup \mathcal{S}$ is the \emph{supremum of $\mathcal{S}$}.
By defining, $\sup \setS \triangleq -\infty$ when $\setS = \emptyset$, we correctly include the case $\setC(t) = \emptyset$ (resp. $\setD(t)= \emptyset$) when all agents including the highest earner are defecting (resp. cooperating) since the active imitator then tends to defect (resp. cooperate): $\D(t) > \C(t) = -\infty$ (resp. $\C(t) > \D(t) = -\infty$).
Each of the above three update rules are commonly used in the literature, either in their current or similar forms \cite{tsakas2014imitating,li2019control,swenson2017robustness,bravo2015reinforcement,chapman2013convergent, wang2020evolution}, under stochastic setups \cite{tan2016emerging, montanari2010spread, sandholm2010population}, under different games \cite{etesami2019simple, govaert2017convergence}, or when the population dynamics are approximated by continuous mean field dynamics \cite{laraki2015inertial, diekmann2009cyclic}.  


The agents can be thought of as investors in the stock market who either \emph{buy} or \emph{sell}.
Some simply follow the strategy of the most successful investors.
Some choose to buy when they observe enough others doing so, because they believe that the market price will then increase.
Others choose to sell when enough others are buying since they can then sell at high prices.
These three types of investors are common in the stock market, and their coexistence is evident.
The three types of individuals also coexist when people decide whether or not to get vaccinated \cite{bodine2013conforming}.
When the vaccination coverage is high, some believe that they are relatively safe and decide not to have the vaccination.
Meanwhile, under the same situation, some choose to get vaccinated because they worry that the danger of the disease is the reason people get vaccinated.
Others do not put much thought into the problem and just follow the most healthy individuals.

While all imitators behave in the exact same way, best-responders with different payoff matrices may choose different strategies upon activation.
We, thus, group the best-responders with the same payoff matrix into the same \emph{type} and assume that there are all together $b$ types of nonconformists, which we refer to as the \emph{anticoordinating types}, and $b'$ types of conformists, which we refer to as the \emph{coordinating types}.
Now note that best-responders of the same type share the same temper, and unlike conformists, nonconformists' tendency to cooperate increases with their tempers.
Therefore, we label the anticoordinating types in the descending order of their tempers by $1,2, \ldots, b$ and the coordinating types in the ascending order of their tempers by $1, 2, \ldots, b'$.
By denoting the temper of a type-$i$ nonconformist as $\tau_i^a$ and the temper of a type-$i$ conformist as $\tau_i^c$ and assuming distinct tempers, we obtain
\begin{equation}\label{TemperOrder}
    \tau_1^a>\tau_2^a>\ldots>\tau_b^a 
\quad
\text{and}
\quad
    \tau_1^c < \tau_2^c < \ldots < \tau_{b'}^c.
\end{equation}
We consider only the case where the tempers are not integers to avoid unnecessary complications (see \cite{ramazi2020convergence} for a framework for integer tempers).
We take the distribution of cooperators over the imitators and the two types of the best-responders as the \emph{population state}:
\begin{equation*}
    \x= (x^I, x_1^a, \ldots, x_b^a, x_{b'}^c, \ldots, x_1^c),
\end{equation*}
where $x^I$ is the number of cooperating imitators, and $x_i^a$ and $x_i^c$ denote the number of cooperating nonconformists and cooperating conformists of type-$i$.
Hence, the state space is
\begin{align*}
    \X = \Big\{(x^I, x_1^a, \ldots, x_b^a, x_{b'}^c, \ldots, x_1^c)\,|\,
    & x^I \in \{0, \ldots, m\}, \\
    & x_i^a \in \left\{0, \ldots, n_i^a\right\} \, \forall i \in \{1, \ldots, b\},\\
    & x_i^c \in \left\{0,\ldots, n_i^c\right\} \, \forall i \in \{1, \ldots, b'\}
    \Big\},
\end{align*}
where $m >0$ is the number of imitators, and $n_i^a$ and $n_i^c$ are the number of type-$i$ nonconformists and conformists. 
We assume that associated with every payoff matrix, there is at least one best-responder who earns accordingly; namely, the payoff matrices of the imitators are included in those of the best-responders.
Therefore, each imitator falls into one of the coordinating and anticoordinating types, although the imitators are neither conformists nor nonconformists.
Since agents of the same type earn the same payoff if they play the same strategy, we denote the utilities of type-$i$ anticoordinating cooperators and defectors by $C_i^a$ and $D_i^a$, and the utilities of type-$i$ coordinating cooperators and defectors by $C_i^c$ and $D_i^c$.

The agents become active according to an \emph{activation sequence}, which is an infinite sequence of agents $(i^t)_{t=0}^{\infty}$, where $i^t$ is the active agent at time $t$.
The activation sequence is \emph{asynchronous}; namely, at every time $t\geq 0$, exactly one agent becomes active to update her strategy at time $t+1$.
We do not make any assumption on the activation sequence, allowing it to be randomly generated or depend on past played actions.
Update rules \eqref{abr_update_rule}, \eqref{cbr_update_rule} and \eqref{general_im_update_rule} together with the activation sequence of the agents govern the dynamics of $\x(t)$, which we refer to as the \emph{population dynamics}.
Indeed, the dynamics can be seen as a \emph{multivalued dynamical system} as the sate $x(t)$ may end up at different states at the next time step, based on the activation sequence.
We are interested in the limit sets of the dynamics.
An \emph{equilibrium} of the population dynamics is a state where every agent is satisfied with her strategy. 
More specifically, we define an \emph{equilibrium} to be a population state $\x^*\in\X$ such that if the solution trajectory starts from that state, it remains there afterwards under any activation sequence, i.e., $\x(0)=\x^* \Rightarrow \x(t) = \x^*$ for all $t \geq 0$ and any $(i^t)_{t=0}^{\infty}$.
According to \cite{le2020heterogeneous}, mixed populations of best-responders and imitators who play anticoordination games, will reach an equilibrium if and only if the dynamics admit an equilibrium.
However, when both coordination and anticoordination games are played in the population, which is the case with our setup, the population may never settle down and exhibit perpetual fluctuations \cite{ramazi2016networks}, even if the dynamics admit an equilibrium.
We simulate some of the asymptotic outcomes of the dynamics in the following section.
The examples also provide useful hints in developing the results in the later sections.
\section{Numerical examples} \label{sec_examples}
\begin{example}
    Consider a population of $75$ agents with two anticoordinating and three coordinating types.
    The distribution of the population over the imitators and different types of best-responders is
    $(\tr{m}, \tg{n_1^a}, \tg{n_2^a}, \tb{n_3^c}, \tb{n_2^c}, \tb{n_1^c}) = (\tr{20}, \tg{9}, \tg{20}, \tb{10}, \tb{1}, \tb{15})$, 
    The payoff matrices are set so that they result in the following utility functions:
    \begin{align*}
        &C_1^a(n^C) = -\frac{16}{13} n^C+\frac{623}{13}, &&D_1^a(n^C) = \frac{36}{13} n^C-\frac{768}{13};\\
        &C_2^a(n^C) = -\frac{80}{79} n^C+\frac{4375}{79}, &&D_2^a(n^C) = 45;\\
        &C_3^c(n^C) = \frac{72}{43} n^C-\frac{1945}{43}, &&D_3^c(n^C) = -\frac{76}{43} n^C +\frac{4086}{43};\\
        &C_2^c(n^C) = \frac{4}{3} n^C-23, &&D_2^c(n^C) = -\frac{2}{3}n^C+40;\\
        &C_1^c(n^C) = \frac{48}{13} n^C - \frac{855}{13}, &&D_1^c(n^C) = \frac{24}{13} n^C-\frac{291}{13}.
    \end{align*}
    Therefore, the tempers are
    \begin{equation*}
        (\tg{\tau_1^a}, \tg{\tau_2^a}, \tb{\tau_3^c}, \tb{\tau_2^c}, \tb{\tau_1^c}) = (\tg{26.8}, \tg{10.3}, \tb{40.8}, \tb{31.5}, \tb{23.5}).
    \end{equation*}
    The population dynamics admit multiple equilibria: $(\tr{0}, \tg{9}, \tg{0},\tb{0}, \tb{0}, \tb{15})$, $(\tr{20}, \tg{0}, \tg{0}, \tb{0}, \tb{1}, \tb{15})$, $(\tr{20}, \tg{0}, \tg{0}, \tb{10}, \tb{1}, \tb{15})$, and $(\tr{15}, \tg{0}, \tg{0}, \tb{0}, \tb{0}, \tb{15})$.
    We observe that at any equilibrium, there are a benchmark type $j$ of nonconformists and a benchmark type $j'$ of conformists such that all nonconformists of types $1, \ldots, j$ (resp. $j+1, \ldots, b$) and all conformists of types $1, \ldots, j'$ (resp. $j'+1, \ldots, b'$) cooperate (resp. defect).
    At $(\tr{0}, \tg{9}, \tg{0},\tb{0}, \tb{0}, \tb{15})$, for example, the nonconformists' benchmark type is $\tg{1}$ and the conformists' benchmark type is $\tb{1}$.
    The total number of cooperators is $24$, falling short of the temper of type-$1$ nonconformists and exceeding the temper of  type-$1$ conformists.
    Hence, these agents continue cooperating upon activation.
    nonconformists of type $2$ (resp. conformists of types $2$ and $3$) also do not switch strategies, because they are defecting and the number of cooperators exceeds (resp. falls short of) their temper.
    Moreover, none of the imitators switch strategies since they are defecting and coordinating defectors of type $3$ are the highest earners.
\end{example}
\begin{example}\label{fluc_equil_eg}
    Consider a population of $75$ agents belonging to two anticoordinating and three coordinating types.
    The population is distributed among the imitators and best-responders as
    $(\tr{m}, \tg{n_1^a}, \tg{n_2^a}, \tb{n_3^c}, \tb{n_2^c}, \tb{n_1^c}) = (\tr{14}, \tg{9}, \tg{20}, \tb{10}, \tb{1}, \tb{15})$.
    Among the $14$ imitators, $4$ of them have the same payoff matrix as that of type-$1$ nonconformists.
    The payoffs are set so that they result in the following utility functions:
    \begin{align*}
        &C_1^a(n^C) = -\frac{40}{13} n^C+\frac{1655}{13}, &&D_1^a(n^C) = \frac{36}{13} n^C-\frac{378}{13};\\
        &C_2^a(n^C) = \frac{16}{79} n^C+\frac{1495}{79}, &&D_2^a(n^C) = \frac{64}{79} n^C+\frac{1003}{79};\\
        &C_3^c(n^C) = \frac{24}{43} n^C-\frac{505}{43}, &&D_3^c(n^C) = -\frac{40}{43} n^C +\frac{2103}{43};\\
        &C_2^c(n^C) = \frac{4}{3} n^C-23, &&D_2^c(n^C) = -\frac{2}{3}n^C+40;\\
        &C_1^c(n^C) = \frac{8}{13} n^C + \frac{345}{13}, &&D_1^c(n^C) = -\frac{16}{13} n^C+\frac{909}{13}.
    \end{align*}
    The tempers are, hence, 
    \begin{equation*}
        (\tg{\tau_1^a}, \tg{\tau_2^a}, \tb{\tau_3^c}, \tb{\tau_2^c}, \tb{\tau_1^c}) = (\tg{26.8}, \tg{10.3}, \tb{40.8}, \tb{31.5}, \tb{23.5}).
    \end{equation*}
    The population dynamics exhibit different asymptotic outcomes for the same initial condition where all agents defect (\cref{fig:example}). 
    Depending on the activation sequence, the population may reach an equilibrium (\cref{fig:xd}) or undergo perpetual fluctuations (\cref{fig:xf}).
    In the second case, all type-$2$ nonconformists and all conformists fix their strategies after some finite time $T$.
    This is guaranteed since the number of cooperators fluctuates between $26$ and $27$ after time $T$.
    When the number of cooperators is $26$, the agents who have not fixed their strategies, i.e., the imitators and type-$1$ nonconformists, do not switch from cooperation to defection, because $26$ falls short of the nonconformists' tempers and a cooperator of anticoordinating type $1$ is earning the highest payoff.
    Such a cooperator exists, because if all type-$1$ anticoordinating agents, i.e., the 9 nonconformists and 4 of the imitators, were defecting, 
    then the number of cooperators would be at most $25$.
    Similarly, one can verify that the number of cooperators never exceeds $27$.
    \begin{figure*}[h!]
            \begin{subfigure}[t]{0.49\textwidth}
            \centering
            \includegraphics[trim ={1.5cm 5cm 0 5cm}, clip, width=\textwidth]{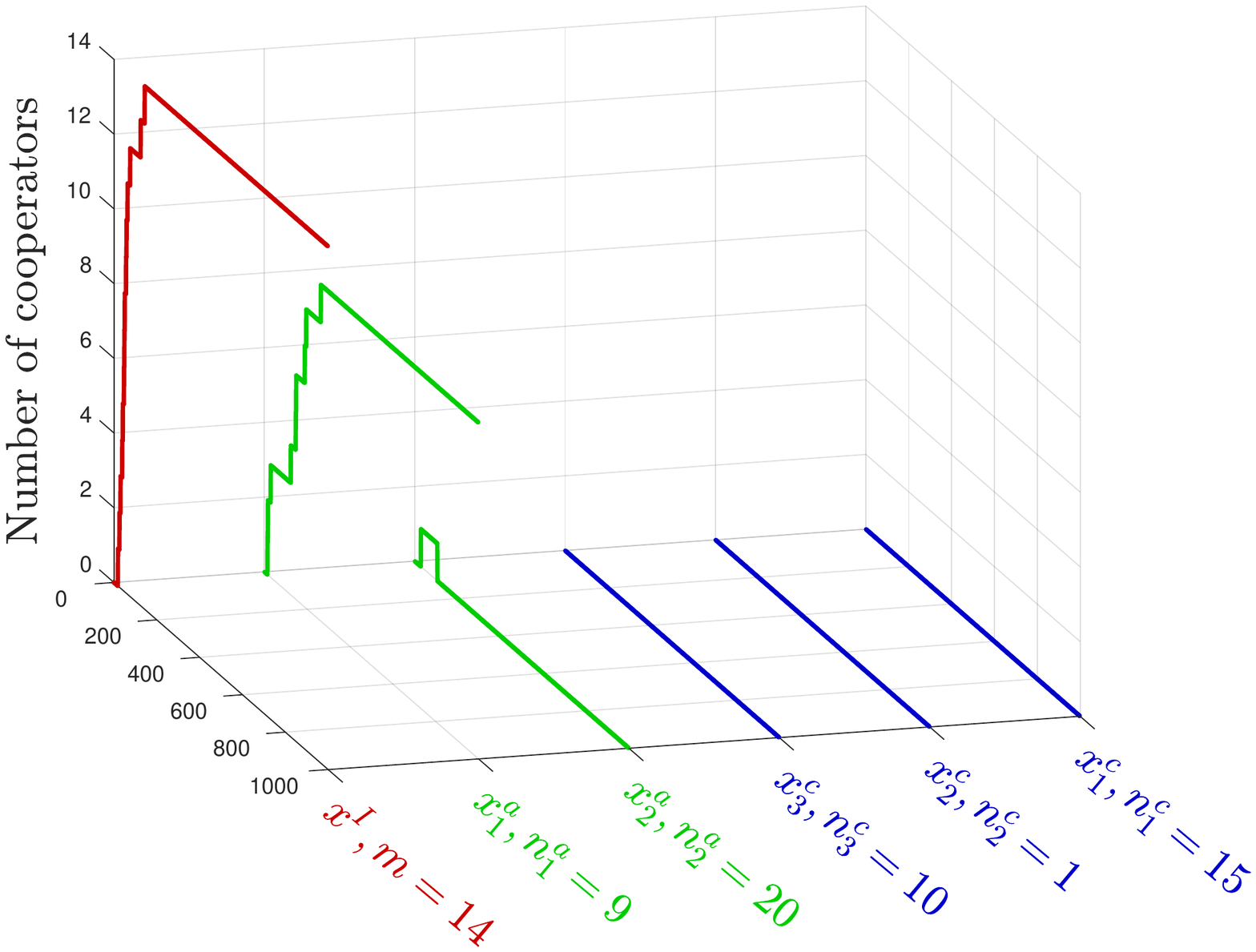}
            \caption{
            \textbf{Reaching a mixed equilibrium.}
            The population reaches the equilibrium $(14,9,0,0,0,0)$.
            The total number of cooperators at this state is $23$, which exceeds the temper of type-$2$ nonconformists and falls short of the tempers of the conformists. 
            Hence no defecting best-responder switch strategies.
            Moreover, $23$ is less than the temper of type-$1$ nonconformists, so they keep cooperating. 
            The imitators do not change their strategies either as the highest earners at this state are type-$1$ nonconformists, who are cooperating.}
            \label{fig:xd} 
        \end{subfigure}
        \hfill
        \begin{subfigure}[t]{0.49\textwidth}
            \centering
            \includegraphics[trim ={1.5cm 5cm 0 5cm}, clip, width=\textwidth]{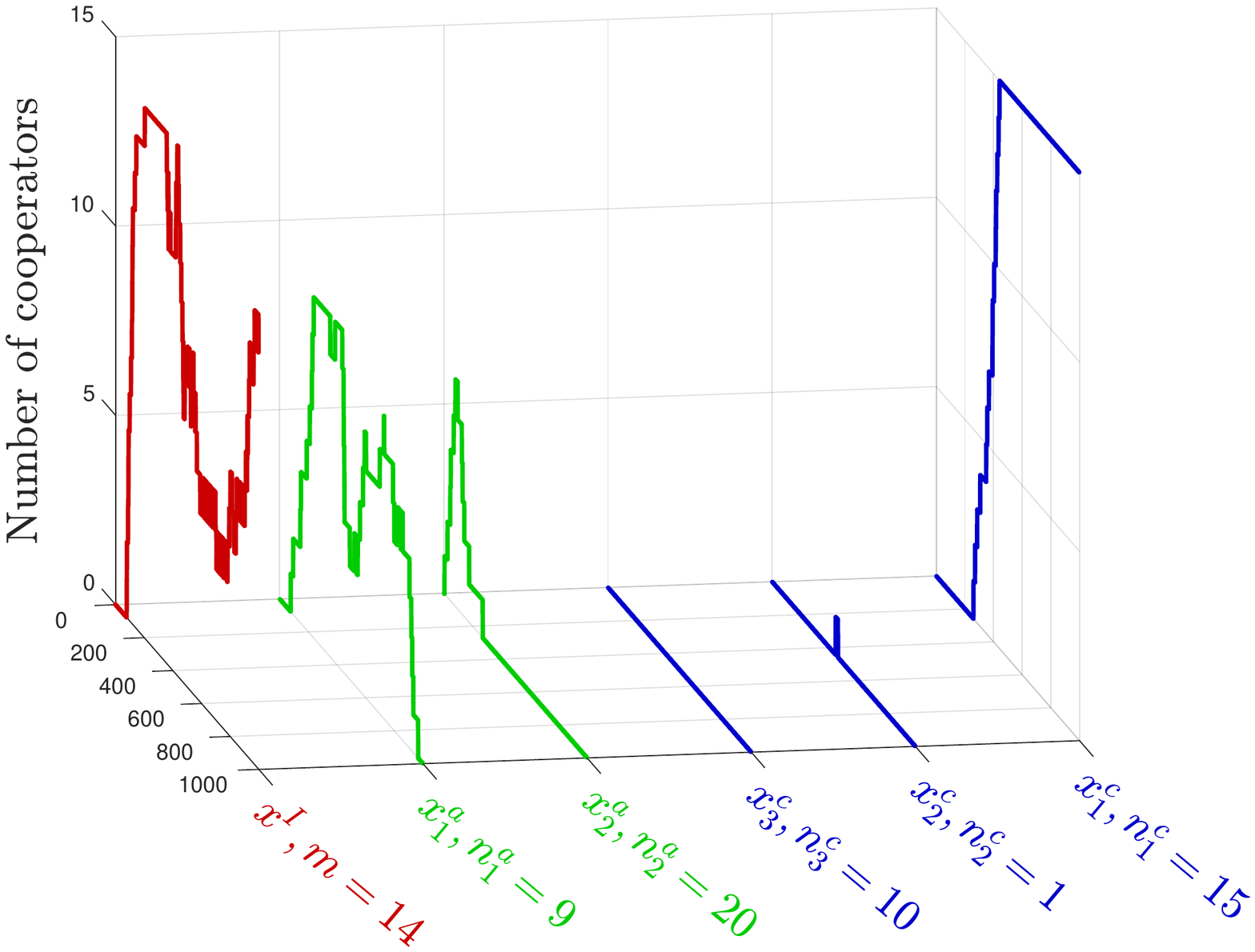}
            \caption{
            \textbf{Not reaching an equilibrium.}
            Even though all nonconformists of type $2$ and conformists of types $2$ and $3$ defect and all type-$1$ conformists cooperate in the long run, the population undergoes perpetual fluctuations as the imitators and type-$1$ nonconformists keep switching strategies. 
            }
            \label{fig:xf}
        \end{subfigure}
        \caption{The population dynamics under two different randomly generated activation sequences.}
        \label{fig:example}
    \end{figure*}
\end{example}

\begin{example}
    Consider a population of $68$ agents belonging to two anticoordinating and three coordinating types.
    The population distribution over the imitators, nonconformists, and conformists is $(\tr{m}, \tg{n_1^a}, \tg{n_2^a}, \tb{n_3^c}, \tb{n_2^c}, \tb{n_1^c}) = (\tr{14}, \tg{9}, \tg{20}, \tb{10}, \tb{5}, \tb{10})$.
    Among the $14$ imitators, $4$ of them are of type-$1$ anticoordinating.
    The payoffs are set so that they result in the following utility functions:
    \begin{align*}
        &C_1^a(n^C) = -\frac{20}{7} n^C+\frac{845}{7}, &&D_1^a(n^C) = \frac{18}{7} n^C-\frac{162}{7};\\
        &C_2^a(n^C) = \frac{8}{39} n^C+\frac{245}{13}, &&D_2^a(n^C) = \frac{16}{13} n^C+\frac{105}{13};\\
        &C_3^c(n^C) = \frac{4}{7} n^C-\frac{85}{7}, &&D_3^c(n^C) = -\frac{20}{21} n^C +\frac{347}{7};\\
        &C_2^c(n^C) = \frac{2}{3} n^C, &&D_2^c(n^C) = -\frac{4}{3}n^C+57;\\
        &C_1^c(n^C) = \frac{8}{19} n^C + \frac{615}{19}, &&D_1^c(n^C) = -\frac{16}{19} n^C+\frac{1107}{19}.
    \end{align*}
    The tempers are, hence, 
    \begin{equation*}
        (\tg{\tau_1^a}, \tg{\tau_2^a}, \tb{\tau_3^c}, \tb{\tau_2^c}, \tb{\tau_1^c}) = (\tg{26.5}, \tg{10.5}, \tb{40.5}, \tb{28.5}, \tb{20.5}).
    \end{equation*}
    This population does not admit any equilibrium, so it undergoes perpetual fluctuations (\cref{fig:xf2}) and confines
    the number of cooperators to the interval $[25,32]$ (\cref{fig:xf2_nc}).
    It is not difficult to see that the number of cooperators does not fall short of $21$ (nor exceeds $35$ in the long run).
    To see this, consider the time when the total number of cooperators is $26$. 
    Then based on the utility functions, a cooperator of anticoordinating type $1$ is earning the highest payoff. 
    Such a cooperator is guaranteed to exist since if all of the agents of anticoordinating type $1$ are defecting, the number of cooperators is at most $25$.
    This cooperator continues playing cooperation and earning the highest payoff when the number of cooperators decreases.
    So the cooperating imitators do not switch to defection.
    Type-$1$ nonconformists do not switch to defection either, because the number of cooperators is less than their tempers.
    So the cooperating conformists of type $2$ are the only ones who can switch to defection.
    However, there are only $5$ conformists of type $2$.
    On the other hand, the above argument holds when the number of cooperators is $24,23, 22$ or $21$. 
    Hence, the number of cooperators cannot decrease to $20$, after it reaches $26$.
    Similarly, one can verify that the number of cooperators cannot increase from $30$ to $35$.
    \begin{figure*}[h!]
            \begin{subfigure}[t]{0.49\textwidth}
            \centering
            \includegraphics[trim= {3cm 1cm 1cm 3cm}, clip, width=\textwidth]{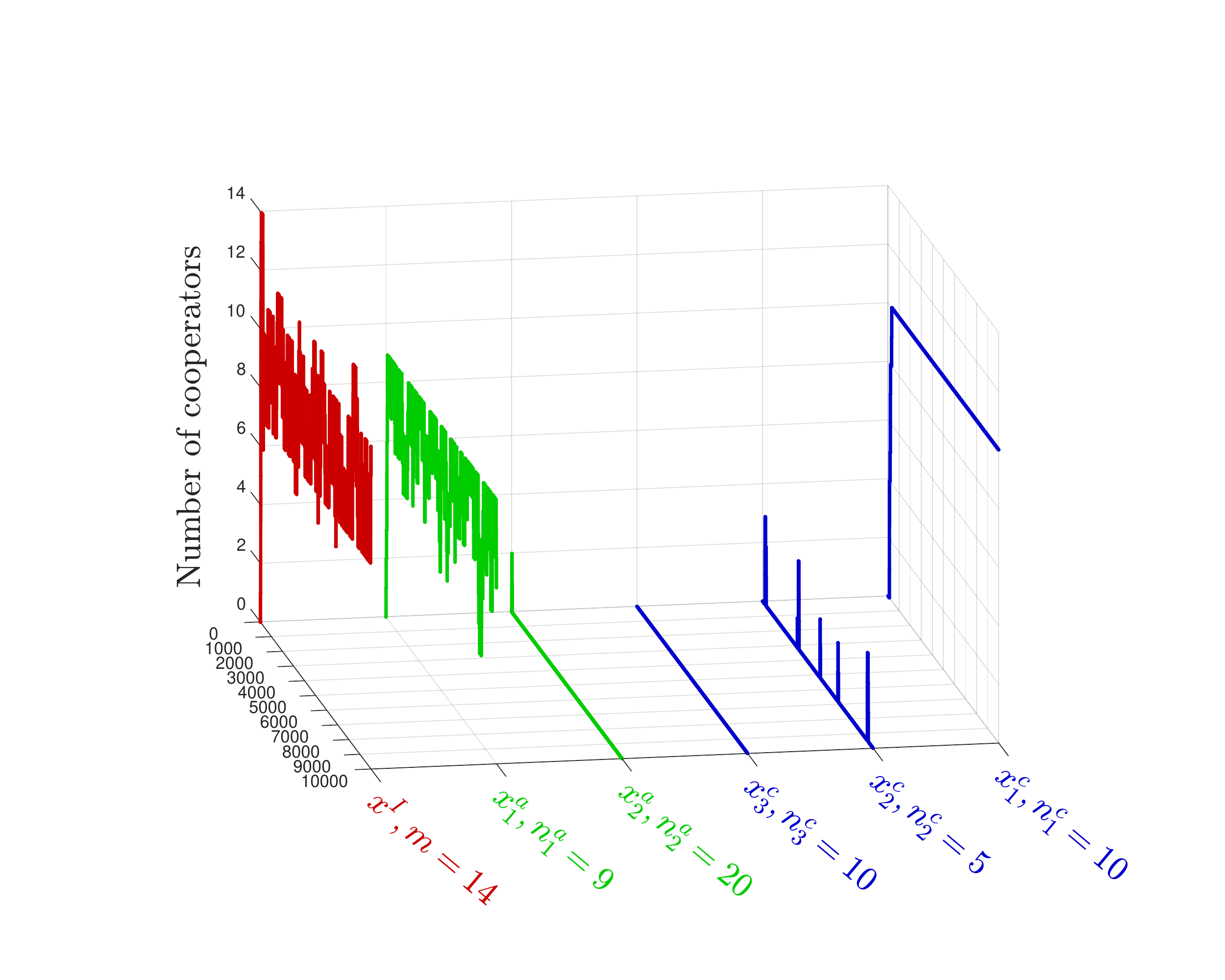}
            \caption{
            \textbf{Not reaching an equilibrium.}
            The dynamics do not possess an equilibrium. 
            The solution trajectories start from the state where all agents defect.
            All nonconformists of type $2$ and conformists of types $2$ and $3$ defect and all type-$1$ conformists cooperate in the long run, yet the population never reaches an equilibrium as the imitators and type-$1$ nonconformists keep switching strategies.}
            \label{fig:xf2} 
        \end{subfigure}
        \hfill
        \begin{subfigure}[t]{0.49\textwidth}
            \centering
            \includegraphics[trim ={1.5cm 5cm 0 5cm}, clip, width=\textwidth]{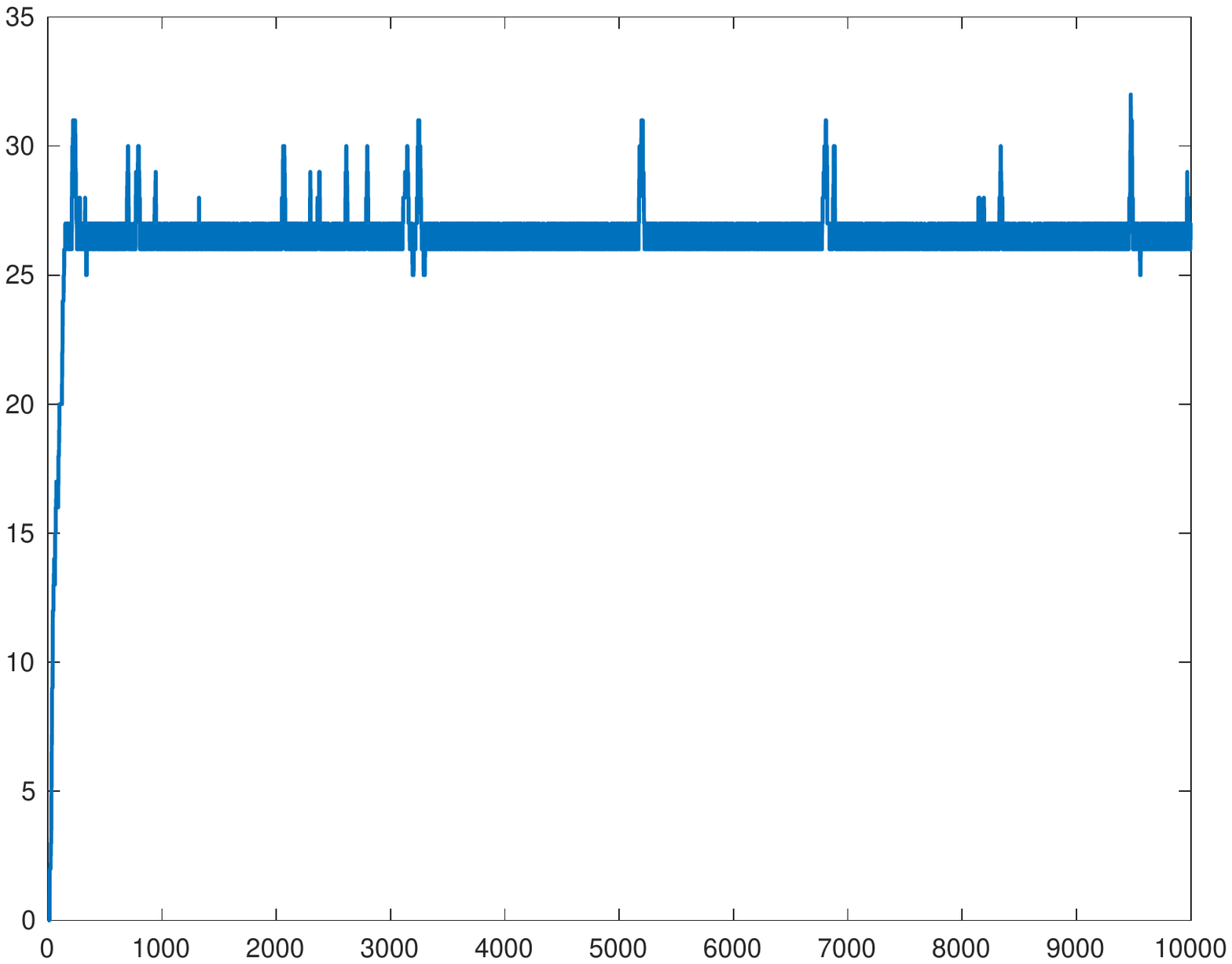}
            \caption{
            \textbf{The total number of cooperators.}
            The number fluctuates between $26$ and $30$.
            }
            \label{fig:xf2_nc}
        \end{subfigure}
        \caption{The population dynamics and the total number of cooperators under a randomly generated activation sequence.}
        \label{fig:example2}
    \end{figure*}
\end{example}

\section{Equilibria}    \label{sec:eq}
No agent would switch strategies at equilibrium.
Hence, from update rule \eqref{abr_update_rule}, if an nonconformist cooperates (resp. defects) at equilibrium, so will all others with higher (resp. lower) tempers, and according to \eqref{cbr_update_rule}, if a conformist cooperates (resp. defects), so will all others with lower (resp. higher) tempers.
Hence, we expect an equilibrium to be of the following form:
\begin{equation*}
    \x_{r, j_1, j'_1} 
    \triangleq (r, n_1^a, \ldots, n_{j_1}^a, 0,  \ldots, 0, 0,  \ldots, 0, n_{j'_1}^c, \ldots, n_1^c),
\end{equation*}
where $r \in \{0, \ldots, m\}$, $j_1\in \{0, \ldots, b\}$ and $j'_1\in \{0, \ldots, b'\}$.
Note that when $j_1 = 0$ (resp. $b$), all nonconformists defect (resp. cooperate) and when $j'_1= 0$ (resp. $b'$), all conformists defect (resp. cooperate). 
To simplify the analysis of these extreme cases, we define $\tau_{0}^a=\tau_{b'+1}^c$ as a number that is greater than $n$ and all other tempers and $\tau_{b+1}^a=\tau_{0}^c$ as a negative number that is smaller than all other tempers.
Then $\tau^a_j$ and $\tau^c_{j'}$ are well-defined for all $j\in\{0,\ldots,b+1\}$ and all $j'\in\{0,\ldots,b'+1\}$.

The goal of this section is to identify all equilibrium states that the population admits.
First, we prove in \cref{equil_form}, that all equilibria are of the form $\x_{r, j_1, j'_1}$.
Next, we find the necessary and sufficient conditions for states of the form $\x_{r, j_1, j'_1}$ to be an equilibrium in \cref{defect_equil}, \cref{coop_equil}, and \cref{mix_equil}.
Finally, we summarize the results in \cref{equil} and interpret them.

\subsection{Forms of equilibria}
\begin{lemma} \label{equil_form}
If $\x^*$ is an equilibrium with $r$ cooperating imitators, then there exist $j_1\in \{0, \ldots, b\}$ and $j'_1\in \{0, \ldots, b'\}$ such that $\x^*= \x_{r, j_1, j'_1}$.
\end{lemma}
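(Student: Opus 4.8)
The plan is to use the defining property of an equilibrium---that the state is fixed under \emph{every} activation sequence---together with the non-integrality of the tempers and the ordering \eqref{TemperOrder}. Let $n^{C*}$ denote the total number of cooperators at $\x^*$; this is simply a fixed integer determined by $\x^*$, so the argument involves no circularity.

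First I would pin down the best-responders one type at a time. Fix an anticoordinating type $i$. Its $n_i^a$ members all share the temper $\tau_i^a$, which is not an integer, so $n^{C*}\neq\tau_i^a$. If $n^{C*}<\tau_i^a$, then by \eqref{cbr_update_rule} a defecting type-$i$ nonconformist would switch to cooperation upon activation, contradicting that $\x^*$ is fixed under all activation sequences; hence $x_i^a=n_i^a$. Symmetrically, if $n^{C*}>\tau_i^a$, activating a cooperating type-$i$ nonconformist forces a switch to defection, so $x_i^a=0$. The identical dichotomy applies to each coordinating type $i$ via \eqref{abr_update_rule}: $n^{C*}>\tau_i^c$ forces $x_i^c=n_i^c$, while $n^{C*}<\tau_i^c$ forces $x_i^c=0$.

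Next I would promote these per-type statements to the claimed block structure using strict monotonicity of the tempers. Since $\tau_1^a>\tau_2^a>\cdots>\tau_b^a$ and none equals $n^{C*}$, the set $\{\,i:\tau_i^a>n^{C*}\,\}$ is an initial segment $\{1,\ldots,j_1\}$ for a unique $j_1\in\{0,\ldots,b\}$; by the previous step $x_i^a=n_i^a$ for $i\le j_1$ and $x_i^a=0$ for $i>j_1$. Likewise, since $\tau_1^c<\tau_2^c<\cdots<\tau_{b'}^c$, the set $\{\,i:\tau_i^c<n^{C*}\,\}$ is an initial segment $\{1,\ldots,j'_1\}$ for a unique $j'_1\in\{0,\ldots,b'\}$, giving $x_i^c=n_i^c$ for $i\le j'_1$ and $x_i^c=0$ for $i>j'_1$. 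Combining this with $x^I=r$ from the hypothesis yields exactly $\x^*=\x_{r,j_1,j'_1}$ as defined just above the lemma.

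I do not expect a genuine obstacle. Since the population state records only the \emph{number} of cooperating imitators and not their identities, no separate work is needed on the imitator coordinate and the imitation rule \eqref{general_im_update_rule} is never invoked. The only two points requiring care are (i) the appeal to non-integrality of the tempers, which rules out the degenerate case $n^{C*}=\tau$ in each update rule, and (ii) the ``under any activation sequence'' clause in the definition of equilibrium, which is precisely what licenses singling out a specific agent who would want to switch.
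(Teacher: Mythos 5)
Your proof is correct and takes essentially the same route as the paper's: since an equilibrium must be fixed under any activation sequence, activating a would-be switcher (possible because tempers are non-integer and ordered as in \eqref{TemperOrder}) forces each anticoordinating type to be fully cooperating when $n^{C*}<\tau_i^a$ and fully defecting when $n^{C*}>\tau_i^a$ (and symmetrically for conformists), after which monotonicity gives the block form; the paper merely reverses the bookkeeping by first locating $n^C(\x^*)$ in an interval $(\tau_{j_1+1}^a,\tau_{j_1}^a)$ using the sentinel tempers and then arguing the same contradiction. One cosmetic remark: you cite \eqref{cbr_update_rule} for the nonconformists, which matches the equations as displayed in \cref{sec:model}, while the paper's own proof cites \eqref{abr_update_rule} for the identical step; this is an internal labeling inconsistency of the paper and does not affect your argument.
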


\begin{proof}
Because $\tau_{0}^a = \tau_{b'+1}^c > n$ and $\tau_{b+1}^a = \tau_{0}^c < 0$, there exists $j_1\in [0, b]$ and $j'_1\in [0, b']$ such that $n^C(\x^*)\in (\tau_{j_1+1}^a, \tau_{j_1}^a)$ and $n^C(\x^*) \in (\tau_{j'_1}^c, \tau_{j'_1+1}^c)$.
We prove by contradiction that $x_i^a = n_i^a$ for all $i\in \left\{1, \ldots, j_1\right\}$.
Suppose on the contrary that $x_i^a < n_i^a$ for some $i \in \left\{1, \ldots, j_1\right\}$.
Let $\x(0) = \x^*$.
Under an activation sequence where the first active agent is a defecting nonconformist of type-$i$, she will switch to cooperation in view of \eqref{abr_update_rule} since 
\begin{equation*}
    n^C(0) = n^C(\x^*) < \tau_{j_1}^a \overset{\eqref{TemperOrder}}{\leq} \tau_i^a.
\end{equation*}
This contradicts $\x^*$ being an equilibrium.
Hence, $x_i^a = n_i^a$ for all $i\in \left\{1, \ldots, j_1\right\}$.
The remaining of the proof can be done similarly.
\end{proof}
\subsection{Necessary and sufficient condition}
For $\x_{r, j_1, j'_1}$ to be an equilibrium, cooperating (resp. defecting) best-responders must continue playing cooperation (resp. defection) upon activation.
Hence, it must hold that 
\begin{equation}\label{aTemperCond}
    \tau_{j_1+1}^a < n_{r, j_1, j'_1} < \tau_{j_1}^a,
\end{equation}
\begin{equation}\label{cTemperCond}
    \tau_{j'_1}^c < n_{r, j_1, j'_1} < \tau_{j'_1+1}^c,
\end{equation}
where $n_{r, j_1, j'_1}$ is the total number of cooperators at $\x_{r, j_1, j'_1}$:
\begin{equation}\label{def:n_(r, j_1, j'_1)}
    n_{r, j_1, j'_1} 
    \triangleq r+ \sum_{i=1}^{j_1} n_i^a+\sum_{i=1}^{j'_1} n_i^c.
\end{equation}
These conditions are necessary but not sufficient because of the presence of the imitators.
At equilibrium, either all imitators play the same strategy, resulting in $x^I =0$ or $m$, or there are both a defector and a cooperator earning the highest payoff.
This results in three types of equilibrium candidates:
\emph{(i) defection equilibrium candidate $\x_{0, j_1, j'_1}$}, 
\emph{(ii) cooperation equilibrium candidate $\x_{m, j_1, j'_1}$}, and
\emph{(iii) mixed equilibrium candidate $\x_{r, j_1, j'_1}$, $r\in \{1, \ldots, m-1\}$}.
For each of these candidates, we find separately the necessary and sufficient conditions to be an equilibrium. 
Denote the highest payoff earned by the cooperators of anticoordinating types $1, \ldots, j$ and coordinating types $ 1, \ldots, k$ when the total number of cooperators is $n^C$ by $C_{j, k}(n^C)$, i.e., 
$$C_{j, k}(n^C) = \sup \left\{\sup_{i\in \left\{1, \ldots, j\right\}} C_i^a(n^C), \sup_{i\in \left\{1, \ldots, k\right\}} C_i^c(n^C)\right\},$$
and denote the highest payoff earned by the defectors of anticoordinating types $j, \ldots, b$ and coordinating types $k, \ldots, b'$ when the total number of cooperators is $n^C$ by $D_{j, k} (n^C)$, i.e., 
$$D_{j, k} (n^C) = \sup \left\{\sup_{i\in \left\{j, \ldots, b\right\}} D_i^a(n^C), \sup_{i\in \left\{k, \ldots, b'\right\}} D_i^c(n^C)\right\}.$$

\begin{lemma} \label{defect_equil} 
    $\x_{0,j_1, j'_1}$ is an equilibrium if and only if \eqref{aTemperCond} and $\eqref{cTemperCond}$ hold for $r=0$, 
    and 
    \begin{equation}\label{C<D}
        C_{j_1, j'_1}(n_{0, j_1, j'_1}) \leq D_{j_1+1, j'_1+1}(n_{0, j_1, j'_1}).
    \end{equation}
\end{lemma}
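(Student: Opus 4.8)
The plan is to prove both implications directly: determine which agents at the candidate state $\x_{0,j_1,j'_1}$ would be tempted to deviate, and read off ``nobody deviates'' as the three displayed conditions. Throughout I will use the conventions $\tau_0^a=\tau_{b'+1}^c>n$ and $\tau_{b+1}^a=\tau_0^c<0$ to absorb the extreme cases $j_1\in\{0,b\}$ and $j'_1\in\{0,b'\}$, together with the fact that $n_{0,j_1,j'_1}$ is an integer while no temper is.

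For the ``if'' direction, assume \eqref{aTemperCond}, \eqref{cTemperCond} (with $r=0$) and \eqref{C<D}. Since the one-step transition is determined once the active agent is fixed, it suffices to show that whoever is active at $\x_{0,j_1,j'_1}$ keeps her strategy; the equilibrium property then follows by induction on $t$. A cooperating best-responder at this state is a nonconformist of some type $i\le j_1$ or a conformist of some type $i\le j'_1$; by \eqref{TemperOrder} we have $\tau_i^a\ge\tau_{j_1}^a$ and $\tau_i^c\le\tau_{j'_1}^c$, so \eqref{aTemperCond}--\eqref{cTemperCond} place $n_{0,j_1,j'_1}$ strictly below her temper (nonconformist) or strictly above it (conformist), and she keeps cooperating; defecting best-responders are symmetric. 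Every imitator defects, and an active one switches to cooperation iff $\C>\D$ at $\x_{0,j_1,j'_1}$. Here $\C=C_{j_1,j'_1}(n_{0,j_1,j'_1})$, because the cooperators are exactly the best-responders of anticoordinating types $1,\ldots,j_1$ and coordinating types $1,\ldots,j'_1$, while $\D\ge D_{j_1+1,j'_1+1}(n_{0,j_1,j'_1})$, because the defecting best-responders are exactly those of the complementary types; hence \eqref{C<D} gives $\C\le\D$ and no imitator switches.

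For the ``only if'' direction, suppose $\x_{0,j_1,j'_1}$ is an equilibrium. Since no best-responder switches, applying the best-response update rule to each cooperating and each defecting best-responder, and using non-integrality of the tempers, forces $n_{0,j_1,j'_1}$ to lie strictly between the temper of the last cooperating type and that of the first defecting type in each family; these sandwiches are precisely \eqref{aTemperCond} and \eqref{cTemperCond}. Since no imitator switches, $\C\le\D$, and again $\C=C_{j_1,j'_1}(n_{0,j_1,j'_1})$. It remains to upgrade $\C\le\D$ to \eqref{C<D}, that is, to $\C\le D_{j_1+1,j'_1+1}(n_{0,j_1,j'_1})$. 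Fix a defecting agent realizing $\D$ (a maximum over a finite nonempty set). If she is a best-responder, then $\D\le D_{j_1+1,j'_1+1}(n_{0,j_1,j'_1})$ and we are done. If she is an imitator, invoke the standing assumption that her payoff matrix matches some best-responder type: if that type is a defecting one (anticoordinating of index $>j_1$ or coordinating of index $>j'_1$), again $\D\le D_{j_1+1,j'_1+1}(n_{0,j_1,j'_1})$; if it is a cooperating one, the temper conditions just proved give $u^C>u^D$ for that type at $n_{0,j_1,j'_1}$, so her defection payoff is strictly below $C_{j_1,j'_1}(n_{0,j_1,j'_1})=\C\le\D$, contradicting that she realizes $\D$. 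Thus $\D\le D_{j_1+1,j'_1+1}(n_{0,j_1,j'_1})$ in every case, which is \eqref{C<D}.

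The only step that needs care --- and the only place where imitators and best-responders genuinely interact --- is this upgrade from $\C\le\D$ to \eqref{C<D}. The point is that $\D$ can exceed $D_{j_1+1,j'_1+1}(n_{0,j_1,j'_1})$ only through a defecting imitator, and a defecting imitator whose payoff matrix matches a ``cooperating'' type is, by the temper conditions, earning strictly less than $\C$ as a defector, hence cannot be the agent realizing $\D$ once $\C\le\D$. Everything else reduces to bookkeeping with the temper ordering \eqref{TemperOrder} and the update rules.
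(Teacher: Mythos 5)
Your proof is correct and follows essentially the same route as the paper's: sufficiency by checking each agent's update rule via the temper ordering and the comparison $\C \le \D$, and necessity via the key observation that the temper conditions force $C_i^a>D_i^a$ and $C_i^c>D_i^c$ for the cooperating types, so only the complementary types' defection payoffs can matter. The only difference is cosmetic: the paper deduces \eqref{C<D} by contradiction (assuming it fails and showing $\C > D_{1,1}\ge\D$, so an imitator would switch), whereas you argue directly by a case analysis on the agent realizing $\D$; the underlying inequality is the same.
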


\begin{proof}
    (Sufficiency) Let $\x(0) = \x_{0, j_1, j'_1}$.
    Then $n^C(0) \in (\tau_{j_1+1}^a, \tau_{j_1}^a)$ as \eqref{aTemperCond} holds for $r=0$.
    Therefore, nonconformists of types $1, \ldots, j_1$ (resp. $j_1+1, \ldots, b$) are cooperating (resp. defecting) and will not switch strategies upon activation according to \eqref{abr_update_rule} since $n^C(0) < \tau_{j_1}^a < \ldots < \tau_1^a$ in view of \eqref{TemperOrder}.
    Similarly, the conformists also stick to their strategies upon activation. 
    On the other hand, imitators are defecting and will not change their strategies upon activation  because
    \begin{equation*}
        \D(0)\geq D_{j_1+1, j'_1+1}(n_{0, j_1, j'_1}) \overset{\eqref{C<D}}{\geq} C_{j_1, j'_1}(n_{0, j_1, j'_1}) = \C(0).
    \end{equation*}
    So $\x(1) = \x(0)$ under any activation sequence, proving that $\x_{0, j_1, j'_1}$ is an equilibrium.
    
    (Necessity) Suppose $\x_{0, j_1, j'_1}$ is an equilibrium.
    Let $\x(0) = \x_{0, j_1, j'_1}$.
    Because nonconformists of type $j_1$ (resp. $j_1+1$) are cooperating (resp. defecting) and continue doing so upon activation, according to \eqref{abr_update_rule} it must hold that $n^C(0) <\tau_{j_1}^a$ (resp. $n^C(0)> \tau_{j_1+1}^a$).
    So \eqref{aTemperCond} holds for $r=0$.
    Similarly, one can prove that \eqref{cTemperCond} holds for $r=0$.
    Now we show by contradiction that \eqref{C<D} must be in force.
    Suppose, on the contrary, that
    \begin{equation}\label{defect_equil_eq2}
        C_{j_1, j'_1}(n_{0, j_1, j'_1}) > D_{j_1+1, j'_1+1}(n_{0, j_1, j'_1}).
    \end{equation}
    Because nonconformists of types $1, \ldots, j_1$ and conformists of types $1, \ldots, j'_1$ do not switch from cooperation to defection upon activation, it must hold that at time $0$, $C_i^a > D_i^a$ for any $i \in \{1, \ldots, j_1\}$, and $C_i^c > D_i^c$ for any $i \in \{1, \ldots, j'_1\}$.
    Hence, the followings hold when $n^C = n_{0, j_1, j'_1}$:
    \begin{equation*}
        C_{j_1, j'_1} \geq \sup_{i\in \left\{1, \ldots, j_1\right\}} C_i^a >\sup_{i\in \left\{1, \ldots, j_1\right\}} D_i^a,
    \end{equation*}
    \begin{equation*}
        C_{j_1, j'_1} \geq \sup_{i\in \left\{1, \ldots, j'_1\right\}} C_i^c >\sup_{i\in \left\{1, \ldots, j'_1\right\}} D_i^c,
    \end{equation*}
    which together with \eqref{defect_equil_eq2} result in $\C(0) = C_{j_1, j'_1}(n_{0, j_1, j'_1}) > D_{1,1}(n_{0, j_1, j'_1}) \geq \D(0)$.
    So under an activation sequence where the first active agent is an imitator, she will switch from defection to cooperation.
    This contradicts that $\x_{0,j_1, j'_1}$ is an equilibrium, completing the proof.
\end{proof}
So for a defection equilibrium candidate to be an equilibrium, it is necessary and sufficient that
\emph{(i)} the total number of cooperators is less than the tempers of cooperating nonconformists and defecting conformists,
\emph{(ii)} the total number of cooperators is greater than the tempers of defecting nonconformists and cooperating conformists, and
\emph{(iii)} a defecting best-responder earns the highest utility.
Similarly, we find the necessary and sufficient condition for $\x_{m, j_1, j'_1}$ to be an equilibrium.

\begin{lemma} \label{coop_equil}
    $\x_{m, j_1, j'_1}$ is an equilibrium if and only if
    \eqref{aTemperCond} and \eqref{cTemperCond} hold for $r=m$, and
    $$C_{j_1, j'_1}(n_{m, j_1, j'_1}) \geq D_{j_1+1, j'_1+1}(n_{m, j_1, j'_1}).$$
\end{lemma}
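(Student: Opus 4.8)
The plan is to mirror the proof of \cref{defect_equil} with the roles of cooperation and defection exchanged; the one structural difference is that at $\x_{m,j_1,j'_1}$ the imitators all sit in the cooperating block rather than the defecting one, which reverses the direction of the payoff inequality, and the imitators' tie-breaking rule (keep the current strategy when the highest cooperator and defector payoffs tie) puts the equality case on the cooperation side. For sufficiency, set $\x(0)=\x_{m,j_1,j'_1}$. By \eqref{aTemperCond} and \eqref{cTemperCond} with $r=m$, $n^C(0)=n_{m,j_1,j'_1}$ lies strictly between consecutive tempers, so \eqref{abr_update_rule}, \eqref{cbr_update_rule} and the ordering \eqref{TemperOrder} show that no best-responder switches upon activation. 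Every imitator cooperates, so the only defectors are best-responders and $\D(0)=D_{j_1+1,j'_1+1}(n_{m,j_1,j'_1})$, while all cooperating best-responding types $1,\ldots,j_1$ and $1,\ldots,j'_1$ are present so $\C(0)\geq C_{j_1,j'_1}(n_{m,j_1,j'_1})$; the hypothesis then gives $\C(0)\geq\D(0)$ and, by \eqref{general_im_update_rule}, no cooperating imitator switches. Hence $\x(1)=\x(0)$ under every activation sequence.

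For necessity, assume $\x_{m,j_1,j'_1}$ is an equilibrium and start from it. Since the best-responders of anticoordinating types $j_1,j_1+1$ and coordinating types $j'_1,j'_1+1$ do not switch and tempers are non-integer, \eqref{abr_update_rule} and \eqref{cbr_update_rule} force \eqref{aTemperCond} and \eqref{cTemperCond} with $r=m$. Suppose for contradiction that $C_{j_1,j'_1}(n)<D_{j_1+1,j'_1+1}(n)$ where $n=n_{m,j_1,j'_1}$. Because the defecting best-responders of types $j_1+1,\ldots,b$ and $j'_1+1,\ldots,b'$ do not switch to cooperation, $D_i^a(n)>C_i^a(n)$ for $i\in\{j_1+1,\ldots,b\}$ and $D_i^c(n)>C_i^c(n)$ for $i\in\{j'_1+1,\ldots,b'\}$. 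This lets us bound $\C(0)$ from above: a cooperating best-responder of a ``cooperating'' type contributes at most $C_{j_1,j'_1}(n)$, and a cooperating imitator whose payoff matrix matches a ``defecting'' type $i$ contributes $C_i^a(n)<D_i^a(n)\leq D_{j_1+1,j'_1+1}(n)$ (or its coordinating analogue); in every case the contribution is strictly below $D_{j_1+1,j'_1+1}(n)=\D(0)$, so $\C(0)<\D(0)$. Then an activation sequence beginning with a cooperating imitator makes her defect, contradicting equilibrium.

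The step I expect to be the main obstacle is exactly this upper bound on $\C(0)$ in the necessity part. In \cref{defect_equil} every relevant defector is a best-responder whose temper sign is known, but here an imitator cooperating at $\x_{m,j_1,j'_1}$ may carry the payoff matrix of a ``defecting'' best-responding type, so $C_{j_1,j'_1}$ does not by itself bound the imitators' cooperation utilities; the resolution is to observe that the refusal of the best-responders of those same types to leave defection already forces $D_i^a(n)>C_i^a(n)$ (respectively $D_i^c(n)>C_i^c(n)$). One should also handle the degenerate cases $j_1=b$ and/or $j'_1=b'$, where there are no defectors, $D_{b+1,b'+1}\equiv-\infty$, and the payoff inequality as well as both directions of the argument hold vacuously.
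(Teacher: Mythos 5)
Your proof is correct and follows essentially the approach the paper intends: the paper omits a written proof of \cref{coop_equil}, presenting it as the mirror image of \cref{defect_equil}, which is exactly what you carry out, including the key observation that cooperating imitators carrying the payoff matrix of a ``defecting'' type are bounded via $C_i^a(n)<D_i^a(n)\leq D_{j_1+1,j'_1+1}(n)$ (with strictness supplied by the non-integer temper assumption together with the already-established temper conditions). No gaps.
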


So the necessary and sufficient condition for a cooperation equilibrium candidate to be an equilibrium is the same as in \cref{defect_equil}, except that the second condition becomes ``\emph{(ii)} a cooperating best-responder earns the highest utility''.

\begin{lemma} \label{mix_equil}
    $\x_{r,j_1, j'_1}$, $r\in\{1, \ldots, m-1\}$, is an equilibrium if and only if
    \eqref{aTemperCond} and \eqref{cTemperCond} hold,
    and 
    \begin{equation}\label{C=D}
        C_{j_1, j'_1}(n_{r, j_1, j'_1}) = D_{j_1+1, j'_1+1}(n_{r, j_1, j'_1}).
    \end{equation}
\end{lemma}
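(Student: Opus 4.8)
The plan is to follow the template of the proofs of \cref{defect_equil,coop_equil}: the best-responders are handled in exactly the same way, and the only genuinely new point concerns the imitators. Conditions \eqref{aTemperCond}--\eqref{cTemperCond} are precisely what keeps the best-responders fixed (via the temper ordering \eqref{TemperOrder} and the update rules \eqref{abr_update_rule}--\eqref{cbr_update_rule}), so in all three lemmas that part is identical. What changes is that at $\x_{r, j_1, j'_1}$ with $r\in\{1,\dots,m-1\}$ there are \emph{both} $r\ge 1$ cooperating and $m-r\ge 1$ defecting imitators, so the imitators keep their strategies under \eqref{general_im_update_rule} if and only if $\C=\D$ at that state; this two-sided balance is exactly what \eqref{C=D} encodes, replacing the one-sided inequality \eqref{C<D} (and its reverse in \cref{coop_equil}). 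The crux of the proof is therefore to show that, at a state of the form $\x_{r, j_1, j'_1}$ obeying \eqref{aTemperCond}--\eqref{cTemperCond}, the imitators' reference payoffs reduce to the best-responder quantities, i.e.\ $\C=C_{j_1, j'_1}(n_{r, j_1, j'_1})$ and $\D=D_{j_1+1, j'_1+1}(n_{r, j_1, j'_1})$, even though $\C$ and $\D$ are suprema over \emph{all} cooperators and defectors, imitators included.

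For sufficiency, I would start the trajectory at $\x(0)=\x_{r, j_1, j'_1}$ and argue, verbatim as in the sufficiency part of \cref{defect_equil}, that no best-responder switches. Since the cooperating (resp.\ defecting) best-responders are precisely the anticoordinating types $1,\dots,j_1$ and coordinating types $1,\dots,j'_1$ (resp.\ $j_1{+}1,\dots,b$ and $j'_1{+}1,\dots,b'$), one immediately gets $\C(0)\ge C_{j_1, j'_1}(n_{r, j_1, j'_1})$ and $\D(0)\ge D_{j_1+1, j'_1+1}(n_{r, j_1, j'_1})$. For the reverse inequalities, take any cooperating imitator whose payoff matrix is that of an anticoordinating type $i>j_1$ or a coordinating type $i>j'_1$: by \eqref{aTemperCond}--\eqref{cTemperCond} together with \eqref{TemperOrder}, the count $n_{r, j_1, j'_1}$ lies strictly on the ``defect'' side of $\tau_i$, so — using that the tempers are non-integers — its cooperation utility is \emph{strictly} below its defection utility, hence strictly below $D_{j_1+1, j'_1+1}(n_{r, j_1, j'_1})$, which by \eqref{C=D} equals $C_{j_1, j'_1}(n_{r, j_1, j'_1})$; such an imitator can never be the top-earning cooperator. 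The mirror-image argument rules out a ``wrong-side'' defecting imitator being the top defector. Hence $\C(0)=C_{j_1, j'_1}(n_{r, j_1, j'_1})$ and $\D(0)=D_{j_1+1, j'_1+1}(n_{r, j_1, j'_1})$, so \eqref{C=D} yields $\C(0)=\D(0)$, no imitator switches, and $\x(1)=\x(0)$ under every activation sequence.

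For necessity, suppose $\x_{r, j_1, j'_1}$ is an equilibrium. Waking a boundary-type cooperating or defecting best-responder first forces \eqref{aTemperCond}--\eqref{cTemperCond}, exactly as in \cref{equil_form} and the necessity part of \cref{defect_equil}. The identification of $\C(0)$ and $\D(0)$ with the best-responder payoffs now goes through \emph{without} \eqref{C=D}: if a ``wrong-side'' cooperating imitator were the top cooperator, the strict utility gap above would give $\C(0)<D_{j_1+1, j'_1+1}(n_{r, j_1, j'_1})\le\D(0)$, so every cooperating imitator switches to defection upon activation, contradicting equilibrium; symmetrically for $\D(0)$. Finally, since $r\ge 1$ and $m-r\ge 1$, applying the equilibrium property to a cooperating imitator gives $\C(0)\ge\D(0)$ and to a defecting imitator gives $\D(0)\ge\C(0)$, so $\C(0)=\D(0)$, which is exactly \eqref{C=D}. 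The main obstacle is this very identification of $\C$ and $\D$ with $C_{j_1,j'_1}$ and $D_{j_1+1,j'_1+1}$: one has to exclude an imitator whose best response disagrees with its current action from topping its side, and this is precisely where the non-integer-temper assumption is indispensable, as it upgrades the weak inequality between a best-response and a non-best-response utility to the strict one that drives the contradiction.
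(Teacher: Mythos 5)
Your proposal is correct and takes essentially the same route as the paper's proof: conditions \eqref{aTemperCond}--\eqref{cTemperCond} freeze the best-responders exactly as in \cref{defect_equil}, and the imitators are handled by relating $\C$ and $\D$ to $C_{j_1,j'_1}$ and $D_{j_1+1,j'_1+1}$, using that a best-responder type on the ``wrong side'' of its temper strictly prefers the other strategy (where non-integer tempers give strictness). The only cosmetic difference is that you pin down the exact identities $\C=C_{j_1,j'_1}$ and $\D=D_{j_1+1,j'_1+1}$ and conclude directly, whereas the paper chains the inequalities $\C\geq\D$ and $\D\geq\C$ for sufficiency and runs a two-case contradiction for necessity; both rest on the same facts.
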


\begin{proof}
    (Sufficiency) 
    Let $\x(0) = \x_{r, j_1, j'_1}$.
    It suffices to show that $\x(1) = \x(0)$ under any activation sequence.
    Similar to the proof of \cref{defect_equil}, it can be shown that because \eqref{aTemperCond} and \eqref{cTemperCond} hold, best-responders do not switch strategies upon activation. 
    To show that imitators do not change strategies either, we prove that $\C(0) = \D(0)$. 
    We first show that $\C(0) \geq \D(0)$.
    Because nonconformists of types $1, \ldots, j_1$ and conformists of types $1, \ldots, j_1$ are initially cooperating and continue doing so upon activation, similar to the proof of the necessity part in \cref{defect_equil}, it can be shown that
    \begin{equation*}
        \C(0) \geq C_{j_1, j'_1}(n_{r, j_1, j'_1}) >  \sup_{i\in \{1, \ldots, j_1\}} D_i^a(n_{r, j_1, j'_1}),
    \end{equation*}
    and
    \begin{equation*}
        \C(0) \geq C_{j_1, j'_1}(n_{r, j_1, j'_1}) >  \sup_{i\in \{1, \ldots, j_1\}} D_i^c(n_{r, j_1, j'_1}).
    \end{equation*}
    Together with \eqref{C=D}, these yield $\C(0) \geq D_{1,1}(n_{r, j_1, j'_1}) \geq \D(0)$.
    Similarly, we can show that $\D(0) \geq \C(0)$, which results in $\C(0) = \D(0)$ and completes the proof.
    
    (Necessity)
    Suppose that $\x_{r, j_1, j'_1}$ is an equilibrium.
    Similar to the proof of \cref{defect_equil}, it can be shown that \eqref{aTemperCond} and \eqref{cTemperCond} must be in force. 
    We prove by contradiction that \eqref{C=D} must also hold.
    Suppose on the contrary that $C_{j_1, j'_1}(n_{r, j_1, j'_1}) \neq D_{j_1+1, j'_1+1}(n_{r, j_1, j'_1})$.
    Then either of the following two cases holds:
    
    \emph{Case 1.} $C_{j_1, j'_1}(n_{r, j_1, j'_1}) > D_{j_1+1,j'_1+1}(n_{r, j_1, j'_1})$. 
    Then similar to the argument for the necessity part in \cref{defect_equil}, we can show that $\C(0) > \D(0)$.
    This leads to a contradiction since under an activation sequence where the first active agent is a defecting imitator, whose existence is guaranteed because $r \leq m-1$, she switches to cooperation. 
    
    \emph{Case 2.} $D_{j_1+1,j'_1+1} (n_{r, j_1, j'_1})> C_{j_1, j'_1}(n_{r, j_1, j'_1})$.
    Similar to the previous case, this also results in a contradiction, completing the proof.
\end{proof}
So the necessary and sufficient conditions for a mixed equilibrium candidate to be an equilibrium to be an equilibrium are \emph{(i)} the total number of cooperators is less (resp. greater) than the tempers of cooperating (resp. defecting) nonconformists and defecting (resp. cooperating) conformists, and
\emph{(ii)} both a defecting and cooperating best-responder earn the highest payoff.
\subsection{Summary of the results}
The following theorem follows from and summarizes Lemmas \ref{equil_form}, \ref{defect_equil}, \ref{coop_equil} and \ref{mix_equil}.
Denote the set of all equilibria by $\X^*$ and the set of states $\x_{r, j_1, j'_1}$ that are equilibria by $\mathcal{E}$, i.e., 
\begin{align}
    \mathcal{E} = \Big\{\x_{r, j_1, j'_1} \,|\,
    & r\in\{0, \ldots, m\}, j_1\in \{0, \ldots, b\}, j'_1\in\{0, \ldots, b'\},\nonumber\\
    &\tau_{j_1+1}^a < n_{r, j_1, j'_1} < \tau_{j_1}^a, \tau_{j'_1}^c < n_{r, j_1, j'_1} < \tau_{j'_1+1}^c, \label{cond_equil:eq1}\\
    &C_{j_1, j'_1}(n_{r, j_1, j'_1}) \geq D_{j_1+1, j'_1+1}(n_{r, j_1, j'_1}) \text{ if } r > 0 \label{cond_equil:eq2},\\
    &C_{j_1, j'_1}(n_{r, j_1, j'_1}) \leq D_{j_1+1, j'_1+1}(n_{r, j_1, j'_1}) \text{ if } r < m \label{cond_equil:eq3}\Big\}.
\end{align}
\begin{theorem} \label{equil}
    $\X^* = \mathcal{E}$.
\end{theorem}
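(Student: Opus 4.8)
The plan is to prove \cref{equil} by establishing the two inclusions $\X^* \subseteq \mathcal{E}$ and $\mathcal{E} \subseteq \X^*$ separately, and in each direction splitting on the number $r$ of cooperating imitators into the three regimes $r = 0$, $r = m$, and $1 \le r \le m-1$, so that the corresponding one of \cref{defect_equil}, \cref{coop_equil}, \cref{mix_equil} applies. All the analytic content has already been extracted into those lemmas and into \cref{equil_form}; what remains is to check that their hypotheses and conclusions line up exactly with the defining constraints \eqref{cond_equil:eq1}--\eqref{cond_equil:eq3} of $\mathcal{E}$.

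For $\X^* \subseteq \mathcal{E}$: let $\x^* \in \X^*$ have $r$ cooperating imitators. By \cref{equil_form}, $\x^* = \x_{r, j_1, j'_1}$ for some $j_1 \in \{0, \ldots, b\}$ and $j'_1 \in \{0, \ldots, b'\}$. If $r = 0$, then \cref{defect_equil} yields \eqref{aTemperCond}, \eqref{cTemperCond} (which are precisely the temper inequalities in \eqref{cond_equil:eq1}) together with \eqref{C<D}; since $m > 0$ forces $r < m$, inequality \eqref{C<D} is exactly \eqref{cond_equil:eq3}, while \eqref{cond_equil:eq2} is vacuous, so $\x^* \in \mathcal{E}$. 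If $r = m$, \cref{coop_equil} gives the temper inequalities and $C_{j_1, j'_1} \ge D_{j_1+1, j'_1+1}$; since $m > 0$ forces $r > 0$, this is \eqref{cond_equil:eq2} and \eqref{cond_equil:eq3} is vacuous. If $1 \le r \le m-1$, \cref{mix_equil} gives the temper inequalities and the equality \eqref{C=D}, which implies both $C_{j_1, j'_1} \ge D_{j_1+1, j'_1+1}$ and $C_{j_1, j'_1} \le D_{j_1+1, j'_1+1}$, i.e. both \eqref{cond_equil:eq2} and \eqref{cond_equil:eq3}. In every case $\x^* \in \mathcal{E}$.

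For $\mathcal{E} \subseteq \X^*$: let $\x_{r, j_1, j'_1} \in \mathcal{E}$, and run the same case split. Membership in $\mathcal{E}$ always supplies \eqref{aTemperCond} and \eqref{cTemperCond} via \eqref{cond_equil:eq1}. If $r = 0$, then \eqref{cond_equil:eq3} is \eqref{C<D}, so \cref{defect_equil} makes $\x_{0, j_1, j'_1}$ an equilibrium. If $r = m$, then \eqref{cond_equil:eq2} is the hypothesis of \cref{coop_equil}. If $1 \le r \le m-1$, then both \eqref{cond_equil:eq2} and \eqref{cond_equil:eq3} hold, hence $C_{j_1, j'_1}(n_{r, j_1, j'_1}) = D_{j_1+1, j'_1+1}(n_{r, j_1, j'_1})$, i.e. \eqref{C=D}, and \cref{mix_equil} applies. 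Thus $\x_{r, j_1, j'_1} \in \X^*$ in every case, and the proof is complete.

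I do not expect a genuine obstacle here; the work is assembly and bookkeeping. The two points that need care are: (a) using $m > 0$ to see that at the extremes $r = 0$ and $r = m$ exactly one of the guarded conditions \eqref{cond_equil:eq2}, \eqref{cond_equil:eq3} is active, which is what matches the single inequality appearing in \cref{defect_equil}/\cref{coop_equil}; and (b) recognizing in the mixed regime that the equality \eqref{C=D} is logically equivalent to the conjunction of the two inequalities in the definition of $\mathcal{E}$. Beyond that, one only has to make sure the indices $j_1, j'_1$ produced by \cref{equil_form} are the same ones feeding the other lemmas, which is immediate since $\x_{r, j_1, j'_1}$ determines $j_1$ and $j'_1$ uniquely given $r$.
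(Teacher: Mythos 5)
Your proposal is correct and matches the paper's intent exactly: the paper offers no separate argument for \cref{equil}, stating only that it ``follows from and summarizes'' \cref{equil_form}, \cref{defect_equil}, \cref{coop_equil}, and \cref{mix_equil}, which is precisely the two-inclusion, three-case assembly you carry out. Your bookkeeping points (the guarded conditions at $r=0$ and $r=m$, and the equivalence of \eqref{C=D} with the two inequalities in the mixed case) are the right details to check and are handled correctly.
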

This theorem provides an algorithmic approach to determine the set of all equilibria of the population dynamics.
For each combination of $(r, j_1, j'_1)$ where $r \in \{0, \ldots, m\}$, $j_1 \in \{0, \ldots, b\}$, and $j'_1 \in \{0, \ldots, b'\}$, we check Conditions \eqref{cond_equil:eq1}, \eqref{cond_equil:eq2} and \eqref{cond_equil:eq3}.
If all of them are satisfied, then $\x_{r, j_1, j'_1}$ is an equilibrium and is added to the set of equilibria.
Otherwise, we examine the next combination of $(r, j_1, j'_1)$.
Also note that from the examples in \cref{sec_examples}, we know that a single population may admit multiple equilibria with different number of cooperating imitators $r$, or may be not possess an equilibrium at all. 
\subsection{Interpretation of the results}  \label{subsec:interp}
%
At any equilibrium, cooperators form a group of agents who tend to cooperate when they are the only ones in the population who are cooperating (all outside the group are defecting).
We call such groups \emph{cooperation-preserving}.
If a cooperation-preserving group contains an nonconformist, the group size must be insufficient to make her switch strategies.
Moreover, if a cooperation-preserving group contains a conformist, the group size must be sufficient for her to keep playing her strategy.
As with imitators, if one belongs to the cooperation-preserving group, then an agent in the group must earn the highest utility when all inside the group are cooperating and all others, outside the group, are defecting.
Clearly, the cooperators at any equilibrium form a cooperation-preserving group.
However, a state with a cooperation-preserving group of cooperators may not be an equilibrium.
The condition is necessary but not sufficient because the defecting outsiders may switch strategies.
We, hence, define a cooperation-preserving group to be \emph{exclusive} if no agent outside the group tends to cooperate when they are all defecting. 
So the number of individuals in an exclusive cooperation-preserving group is too many for the outsiderish nonconformists to cooperate but not enough for the outsiderish conformists to cooperate.
On the other hand, if an exclusive cooperation-preserving group does not contain all of the imitators, then one of the outsiders must earn the highest utility when they all defect.
This argument results in the following proposition
\begin{proposition} \label{prop_interp}
    A state $x$ is an equilibrium if and only if the cooperators at $x$ form an exclusive cooperation-preserving group.
\end{proposition}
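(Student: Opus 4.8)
The statement is a reformulation of \cref{equil} in the vocabulary of cooperation-preserving groups, so I would prove it by matching definitions rather than reworking the algebra. Fix a state $\x$ and let $G$ be the set of cooperators at $\x$; then $\x$ is precisely the state in which every agent of $G$ cooperates and every agent outside $G$ defects, and $n^C(\x)=|G|$. The plan hinges on the elementary fact that $\x$ is an equilibrium if and only if no agent changes strategy upon activation at $\x$: if no agent switches then $\x(1)=\x$ no matter which agent is active, whence $\x(t)=\x$ for all $t$ and all activation sequences by induction; conversely, if $\x$ is an equilibrium then activating any single agent leaves the state fixed, so that agent does not switch.

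Next I would unwind the two notions. Evaluating the update rules \eqref{abr_update_rule}, \eqref{cbr_update_rule} and \eqref{general_im_update_rule} at $\x$, ``$G$ is cooperation-preserving'' says exactly that no cooperator switches to defection at $\x$: every nonconformist in $G$ has temper larger than $|G|$, every conformist in $G$ has temper smaller than $|G|$, and, if $G$ contains an imitator, a cooperator earns the highest utility, i.e.\ $\C(\x)\geq\D(\x)$. Symmetrically, ``$G$ is exclusive'' says exactly that no defector switches to cooperation at $\x$: every nonconformist outside $G$ has temper smaller than $|G|$, every conformist outside $G$ has temper larger than $|G|$, and, if some imitator lies outside $G$, a defector earns the highest utility, i.e.\ $\D(\x)\geq\C(\x)$.

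The proof is then a short chain of equivalences. If $\x$ is an equilibrium, no agent switches at $\x$; in particular no cooperator switches, so $G$ is cooperation-preserving, and no defector switches, so $G$ is exclusive. Conversely, suppose the cooperators $G$ form an exclusive cooperation-preserving group. The temper conditions above guarantee that no best-responder switches at $\x$. For the imitators: if $G$ contains at least one imitator but not all of them, the two utility conditions force $\C(\x)=\D(\x)$, so every imitator keeps its strategy; if $G$ contains all imitators then $\C(\x)\geq\D(\x)$, so the cooperating imitators keep theirs; and if $G$ contains no imitator then $\D(\x)\geq\C(\x)$, so the defecting imitators keep theirs. In every case no agent switches at $\x$, hence $\x$ is an equilibrium.

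The point I would be most careful about is the treatment of the imitators, and specifically the two edge cases in which $G$ contains all or none of the imitators, where one of the two utility inequalities becomes vacuous --- this is exactly why ``cooperation-preserving'' carries no utility requirement when $G$ has no imitator and ``exclusive'' none when $G$ has every imitator, mirroring why \eqref{cond_equil:eq2} is imposed only for $r>0$ and \eqref{cond_equil:eq3} only for $r<m$ in \cref{equil}. One should also note that while ``tends to cooperate'' for an imitator is the strict inequality $\C(\x)>\D(\x)$, the imitators at issue are already playing the relevant strategy, so ``does not switch'' reduces to the weak inequality, which is what ``earns the highest utility'' records. A less direct alternative would be to show that an exclusive cooperation-preserving $G$ forces $\x=\x_{r,j_1,j_1'}$, using that the tempers are non-integers ordered as in \eqref{TemperOrder}, and that the resulting conditions coincide with \eqref{cond_equil:eq1}--\eqref{cond_equil:eq3}, then invoke \cref{equil}; but this is more work and the direct argument avoids even identifying the indices $j_1,j_1'$.
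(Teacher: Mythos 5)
Your proposal is correct and takes essentially the same route as the paper: the paper offers no formal proof of \cref{prop_interp} beyond the informal discussion surrounding it, which is exactly your definition-unwinding argument --- an equilibrium is a state at which no active agent switches, and ``cooperation-preserving'' plus ``exclusive'' encode precisely that no cooperator and no defector, respectively, switches at $\x$, with the imitator edge cases handled as you do, mirroring why \eqref{cond_equil:eq2} applies only for $r>0$ and \eqref{cond_equil:eq3} only for $r<m$ in \cref{equil}. Nothing further is needed.
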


An exclusive cooperation-preserving group, hence, contains the maximum number of cooperative best-responders such that the group size exceeds the temper of conformists inside the group and nonconformists outside the group but falls short of the tempers of the nonconformists inside the group and conformists outside the group.
This is presented in \eqref{cond_equil:eq1}.
An exclusive cooperation-preserving group also contains the highest earner(s) if it includes an imitator, implied by \eqref{cond_equil:eq2}.
It, however, does not contain all highest earners if it does not include all imitators, which follows from \eqref{cond_equil:eq3}.


\section{Stability analysis}    \label{sec_stability}
For $\x\in\Z^{d}$, $d \in \N$, consider the first norm $\norm{\x}=\sum_{i=1}^{d} |x_i|$. 
An equilibrium $\x^*$ is \emph{stable} if for any $\epsilon > 0$, there exists $\delta > 0$ such that for any initial state $\x(0)$ with $\norm{\x(0)-\x^{*}} < \delta$, we have $\norm{\x(t)-\x^{*}} < \epsilon$ for all $t \geq 0$ under any activation sequence. 
Because any state $\x \in \X$ has $x_i \in \Z_{\geq0}$ for all $i \in \{1, \ldots,b+b'+1\}$, we require that $\delta > 1$.
Otherwise, $\x^*$ is always stable: for $0 < \delta \leq 1$, the only initial state that satisfies $\norm{\x(0)-\x^*} < \delta$ is $\x^*$ itself; hence, for any $\epsilon> 0$, we have $\norm{\x(t)-\x^*} = \norm{\x(0)-\x^*} = 0 <\epsilon$ for all $t\geq 0$ and for all activation sequences. 
Similarly, we require that $\epsilon > 1$; otherwise, no state will be stable.   
Indeed, to prove stability, we only need to consider all initial states $\x(0)$ with $\norm{\x(0)-\x^*}=1$. 
If starting from such states, $\norm{\x(t)-\x^*} \in \{0, 1\}$ for all $t\geq 0$ and under any activation sequence, then $\x^*$ is a stable equilibrium since for any $\epsilon > 1$, $\delta = 2$ satisfies the condition. 
Otherwise, $\x^*$ is unstable as the condition is violated for $\epsilon = 2$.
In the same way, the stability of equilibrium states has been studied under some discrete dynamics \cite{le2020heterogeneous, imitation, ramazi2018asynchronous}; however, none of these studies provide a general definition, which we do in the following:
\begin{definition}  \label{def_stability}
    Consider the time-dependent discrete dynamics $\x(t+1) = f(t,\x(t))$ where $\x\in\Z^d_{\geq0},d \in \N, f: \Z_{\geq0}\times\Z^d_{\geq0} \to \Z^d_{\geq0}$.
    Assume that the dynamics admit some  equilibrium state $\x^*$, i.e., $f(t,\x^*) = \x^*\forall t\in\mathbb{Z}_{\geq0}$.
    The equilibrium $\x^*$ is \emph{stable} if for any initial state $\x(0)$ with $\norm{\x(0)-\x^*}=1$, it holds that $\norm{\x(t)-\x^*} \leq 1 \, \forall t\in\mathbb{Z}_{\geq0}$.
\end{definition}

In our evolutionary game theory framework, this definition implies that if the population state starts from the closed radius-one sphere centered at the stable equilibrium, then the state may not leave the sphere, regardless of the activation sequence.
Namely, when all agents but one, say agent $p$, play the same strategies that they play at equilibrium, none would switch strategies (except for agent $p$ who may switch to her equilibrium strategy). 
In terms of best-responders, this implies that none would change their equilibrium strategies when the number of cooperators is one less or more than that at the equilibrium.
Hence, we expect the following to hold for a stable equilibrium $\x_{r, j_1, j'_1}, r \in \{0, \ldots, m\}$:
\begin{equation} \label{bestresponder_stab_cond1}
    n_{r, j_1, j'_1} \in (\tau_{j_1+1}^a+1, \tau_{j_1}^a-1),
\end{equation}
\begin{equation}\label{bestresponder_stab_cond2}
    n_{r, j_1, j'_1}\in (\tau_{j'_1}^c +1, \tau_{j'_1+1}^c-1).
\end{equation}
In the following three lemmas, we find the necessary and sufficient condition for defection, cooperation, and mixed equilibria to be stable.
To simplify the analysis, we assume that $n_i^a \geq 2$ for every $i \in\{1, \ldots, b\}$, $n_i^c \geq 2$ for every $i \in\{1, \ldots, b'\}$, and $m\geq 1$.

\begin{lemma}  \label{defect_stab}
    Consider $j_1 \in \{0, \ldots, b\}$, $j'_1 \in \{0, \ldots, b'\}$, where at least one of them is non-zero. 
    Then
    $\x_{0, j_1, j'_1}$ is a stable equilibrium if and only if \eqref{bestresponder_stab_cond1} and \eqref{bestresponder_stab_cond2} hold for $r=0$, 
    and for $n^C \in \{n_{0, j_1, j'_1}-1,n_{0, j_1, j'_1}, n_{0, j_1, j'_1}+1\}$,
    \begin{equation}\label{defect_stab_cond3}
        C_{j_1, j'_1}(n^C) \leq D_{j_1+1, j'_1+1}(n^C).
    \end{equation}
\end{lemma}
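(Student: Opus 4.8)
The plan is to reduce stability, as defined in \cref{def_stability}, to a one-step invariance property and then carry out a finite case analysis over the single-coordinate perturbations of $\x^*:=\x_{0,j_1,j'_1}$. First I would record that \eqref{bestresponder_stab_cond1} and \eqref{bestresponder_stab_cond2} at $r=0$ imply \eqref{aTemperCond} and \eqref{cTemperCond} at $r=0$, so together with \eqref{defect_stab_cond3} evaluated at $n^C=n_{0,j_1,j'_1}$ (which is exactly \eqref{C<D}) they guarantee by \cref{defect_equil} that $\x^*$ is an equilibrium; hence in both directions ``stable equilibrium'' and ``equilibrium'' can be disentangled. Since $\x^*$ has $x^I=0$, a state $\x$ with $\norm{\x-\x^*}=1$ is obtained by making one imitator cooperate, making one cooperating best-responder (a nonconformist of type $i\le j_1$ or a conformist of type $i\le j'_1$) defect, or making one defecting best-responder (type $i>j_1$ or $i>j'_1$) cooperate; in every case $n^C(\x)=n_{0,j_1,j'_1}\pm 1$. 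Because one activation changes one coordinate by at most one, from a state at distance $\le 1$ one step can only reach distance $\le 2$, so by induction on $t$ the equilibrium $\x^*$ is stable \emph{iff} no single activation from a state at distance exactly $1$ produces a state at distance $2$.

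The engine of both directions is a payoff computation at the neighboring states. If \eqref{bestresponder_stab_cond1}--\eqref{bestresponder_stab_cond2} hold and $n^C\in\{n_{0,j_1,j'_1}-1,n_{0,j_1,j'_1},n_{0,j_1,j'_1}+1\}$, then using \eqref{TemperOrder} every nonconformist type $i>j_1$ has $n^C>\tau_i^a$ and every conformist type $i>j'_1$ has $n^C<\tau_i^c$, so such a best-responder strictly prefers defection: $C_i^a(n^C)<D_i^a(n^C)\le D_{j_1+1,j'_1+1}(n^C)$ and likewise for conformists; dually, every nonconformist type $i\le j_1$ and conformist type $i\le j'_1$ strictly prefers cooperation, so $D_i^a(n^C)<C_i^a(n^C)\le C_{j_1,j'_1}(n^C)$ and likewise. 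Hence at any distance-$1$ state $\x$ (here $n_i^a,n_i^c\ge 2$ is used so that at least one best-responder of each cooperating type still cooperates and one of each defecting type still defects), one gets $\D(\x)\ge D_{j_1+1,j'_1+1}(n^C)$ with every ``extra'' defecting-imitator payoff strictly below $C_{j_1,j'_1}(n^C)$, and $\C(\x)$ equals $C_{j_1,j'_1}(n^C)$ together with possibly some ``wrong-type'' cooperator payoffs (a switched-up best-responder, or a cooperating imitator of a defecting type), each of which is strictly below $D_{j_1+1,j'_1+1}(n^C)$.

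For sufficiency, assume all three conditions and take any distance-$1$ state $\x$ with $n^C=n_{0,j_1,j'_1}\pm1$ and any active agent. A best-responder of a cooperating type has temper $>n^C$ by \eqref{bestresponder_stab_cond1}--\eqref{bestresponder_stab_cond2}, so upon activation it stays cooperating, and the unique defecting one of the perturbed cooperating type (if that is the perturbed coordinate) returns to cooperation, i.e.\ to $\x^*$; symmetrically a best-responder of a defecting type stays defecting, and the unique cooperating one of the perturbed defecting type returns to $\x^*$; in all these cases the resulting state is at distance $\le1$. A defecting imitator switches to cooperation only if $\C(\x)>\D(\x)$, but by the previous paragraph $\C(\x)\le\max\{C_{j_1,j'_1}(n^C),D_{j_1+1,j'_1+1}(n^C)\}=D_{j_1+1,j'_1+1}(n^C)\le\D(\x)$ using \eqref{defect_stab_cond3} at $n^C$, so it does not switch; a cooperating imitator can only switch back to defection (reaching $\x^*$) or stay. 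Thus no update reaches distance $2$, and by the reduction $\x^*$ is stable.

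For necessity, assume $\x^*$ is a stable equilibrium; \cref{defect_equil} already yields \eqref{aTemperCond}--\eqref{cTemperCond} at $r=0$ and \eqref{defect_stab_cond3} at $n^C=n_{0,j_1,j'_1}$. If the upper (resp.\ lower) bound in \eqref{bestresponder_stab_cond1} failed, then since $n_{0,j_1,j'_1}<\tau_{j_1}^a$ (resp.\ $>\tau_{j_1+1}^a$) and tempers are non-integral we would have $n_{0,j_1,j'_1}+1>\tau_{j_1}^a$ (resp.\ $n_{0,j_1,j'_1}-1<\tau_{j_1+1}^a$); perturbing one defecting imitator to cooperate (resp.\ one cooperating best-responder to defect) to reach a distance-$1$ state with $n^C=n_{0,j_1,j'_1}\pm1$ and then activating a cooperating type-$j_1$ nonconformist (resp.\ a defecting type-$(j_1{+}1)$ nonconformist, which exists in the only case where the bound can genuinely fail) makes it switch, giving distance $2$, a contradiction; \eqref{bestresponder_stab_cond2} is symmetric. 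If \eqref{defect_stab_cond3} failed at $n^C=n_{0,j_1,j'_1}\pm1$, note $D_{j_1+1,j'_1+1}(n^C)$ is then finite so a defecting best-responder exists (for $+1$), and $(j_1,j'_1)\ne(0,0)$ provides a cooperating best-responder (for $-1$); perturbing one such agent to reach a distance-$1$ state with that $n^C$ and $x^I=0$ and then activating any imitator gives, by the computation of $\C,\D$ above and the failing inequality $C_{j_1,j'_1}(n^C)>D_{j_1+1,j'_1+1}(n^C)$, that $\C(\x)>\D(\x)$, so the imitator switches to cooperation and the state reaches distance $2$, contradicting stability. The main obstacle I expect is precisely the second paragraph: showing rigorously that a ``wrong-type'' cooperator and defecting imitators of cooperating types never affect the comparison of $\C(\x)$ with $\D(\x)$ at the neighboring states, which is what makes the single scalar inequality \eqref{defect_stab_cond3} the exact condition governing the imitators; a secondary nuisance is checking the boundary cases $j_1\in\{0,b\}$, $j'_1\in\{0,b'\}$, $m=1$ against the hypothesis that $(j_1,j'_1)\ne(0,0)$.
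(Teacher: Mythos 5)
Your proposal is correct and follows essentially the same route as the paper's proof: reduce stability to one-step behavior from the distance-one states, pin the best-responders with the strengthened temper conditions, control the imitators by comparing $\C$ and $\D$ via \eqref{defect_stab_cond3} at the three neighboring cooperator counts, and obtain necessity through explicit perturbation-plus-activation contradictions using $n_i^a,n_i^c\geq 2$, $m\geq 1$, and the non-integer tempers. The boundary check you defer ($j_1=0$ against the upper bound in \eqref{bestresponder_stab_cond1}) is dispatched in the paper exactly as you anticipate, via $\tau_0^a>n$ together with $m\geq1$.
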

\begin{proof}
    (Sufficiency) 
    According to \cref{defect_equil}, $\x_{0,j_1, j'_1}$ is an equilibrium.
    Consider $\x(0)\in\X$ with $\norm{\x(0) - \x_{0, j_1, j'_1}} = 1$.
    At time $0$, all agents except one, say agent $p$, play the same strategies that they play at $\x_{0, j_1, j'_1}$.
    It suffices to show that $\x(1) \in \{\x(0), \x_{0,j_1, j'_1}\}$ under any activation sequence since inductively it can then be show that $\x(t) \in \{\x(0), \x_{0, j_1, j'_1}\}$ for any $t\geq 0$, implying that $\norm{\x(t)-\x_{0, j_1, j'_1}} \leq 1$.
    One of the following cases is in force:
    
    \emph{Case 1:}
    agent $p$ is an imitator.
    Then she must be cooperating; hence, $n^C(0) = n_{0, j_1,j'_1} +1$.
    In view of \eqref{bestresponder_stab_cond1} and \eqref{bestresponder_stab_cond2}, no best-responder switches strategies.
    Moreover, because \eqref{defect_stab_cond3} holds for $n^C = n_{0,j_1,j'_1}+1$, it can be shown, similar to the proof of \cref{defect_equil}, that $\D(0) \geq \C(0)$.
    This implies all imitators, except agent $p$, will stick to their current strategies.
    Thus, $\x(1) \in \{\x(0), \x_{0, j_1, j'_1}\}$ under any activation sequence. 
    
    \emph{Case 2:} agent $p$ is an nonconformist of some type $j \in \{1, \ldots, j_1\}$.
    Then agent $p$ must be defecting, making $n^C(0) = n_{0, j_1, j'_1} -1$.
    Hence, given \eqref{bestresponder_stab_cond1} and \eqref{bestresponder_stab_cond2}, no best-responder, except for agent $p$, will switch strategies.
    Similar to the first case, it can be shown that no imitator will switch strategies since she is defecting and \eqref{defect_stab_cond3} holds for $n_{0, j_1, j'_1}-1$.
    Hence, $\x(1) \in \{\x(0), \x_{0, j_1, j'_1}\}$ under any activation sequence.
    
    All other cases can be handled similarly:
    \emph{Case 3:} 
    agent $p$ is a conformist of some type $j \in \{1, \ldots, j'_1\}$;
    \emph{Case 4:}
    agent $p$ is an nonconformist of some type $j \in \{j_1+1, \ldots, b\}$;
    \emph{Case 5:}
    agent $p$ is a conformist of some type $j \in \{j'_1+1, \ldots, b'\}$.
    
    (Necessity) We prove by contradiction that \eqref{bestresponder_stab_cond1}, \eqref{bestresponder_stab_cond2} and \eqref{defect_stab_cond3} must hold.
    Suppose, on the contrary, that \eqref{bestresponder_stab_cond1} does not hold.
    If $n_{0, j_1, j'_1} \geq \tau_{j_1}^a-1$, then due to the assumption of having non-integer tempers, $n_{0, j_1, j'_1} > \tau_{j_1}^a-1$. 
    If $j_1 = 0$, then in view of \eqref{def:n_(r, j_1, j'_1)}, $n - m \geq n_{0, j_1, j'_1} > \tau_{j_1}^a-1 > n-1$, contradicting the assumption $m \geq 1$.
    Now consider the case $j_1 \geq 1$.
    For the initial state $\x(0)$ where all agents but an imitator play the same strategies that they play at $\x_{0, j_1, j'_1}$, under an activation sequence where the first active agent is an nonconformist of type $j_1$, she will switch from cooperation to defection since $n^C(0) = n_{0, j_1, j'_1}+1 > \tau_{j_1}$.
    This contradicts the stability of $\x_{0, j_1, j'_1}$ since $\norm{\x(1)-\x_{0, j_1, j'_1}} =2$.
    Similarly, $n_{0, j_1, j'_1} \leq \tau_{j_1+1}^a+1$ also leads to a contradiction.
    Therefore, \eqref{bestresponder_stab_cond1} must be in force.
    The same argument proves that \eqref{bestresponder_stab_cond2} holds.
    Now suppose on the contrary that \eqref{defect_stab_cond3} does not hold for some $n^C \in \{n_{0, j_1, j'_1}-1, n_{0, j_1, j'_1}, n_{0, j_1,j'_1}+1\}$.
    Based on \cref{defect_equil}, $n^C = n_{0, j_1, j'_1}$ satisfies \eqref{defect_stab_cond3} because $\x_{0, j_1,j'_1}$ is an equilibrium.
    Hence, one of the followings must hold:
    \begin{equation}\label{defect_stab:C>D:eq1}
        C_{j_1, j'_1}(n_{0, j_1, j'_1}-1) > D_{j_1+1, j'_1+1}(n_{0,j_1, j'_1}-1),
    \end{equation}
    \begin{equation}\label{defect_stab:C>D:eq2}
        C_{j_1, j'_1}(n_{0, j_1, j'_1}+1) > D_{j_1+1, j'_1+1}(n_{0,j_1, j'_1}+1).
    \end{equation}
    Either $j_1 > 0$ or $j'_1 >0$.
    We only prove the case with $j_1 > 0$ since the other can be handled similarly.
    If \eqref{defect_stab:C>D:eq1} holds, then consider an initial state where all agents play the same strategy that they play at $\x_{0, j_1, j'_1}$, except for an nonconformist of type $j_1$. 
    Then because $n^a_{j_1}\geq2$,
    there is still one more type-$j_1$ nonconformist who does not tend to switch to defection; otherwise, $\x_{0, j_1, j'_1}$ is not stable.
    Thus, $C^a_{j_1} \geq D^a_{j_1}$.
    It is also clear that $C^a_{i} \geq D^a_{i}$ for all $i\in\{1,\ldots, j_1-1\}$.
    Hence, in view of \eqref{defect_stab:C>D:eq1}, 
    $\C(0) > \D(0)$.
    Therefore, under an activation sequence where the first active agent is an imitator, she will switch to cooperation, yielding a contradiction.
    A similar argument shows that \eqref{defect_stab:C>D:eq2} also results in a contradiction, completing the proof.
\end{proof}

For the special case of $j_1=j'_1=0$, the state $\x_{0, 0, 0}$ can be shown to be stable if and only if it is an equilibrium and $n_{0, 0, 0} = 0 < \tau_1^c-1$, or equivalently $\tau^c_1 >1$.

Compared to the equilibrium condition for a defection equilibrium candidate (\cref{defect_equil}), the stability condition is slightly tighter.
It requires the number of cooperators to distance by at least one from the tempers of the benchmark types.
It also requires the defecting best-responders to earn the highest utility, also when the total number of cooperators changes by one.
Therefore, the defection equilibrium of a population in general ``is likely'' to be stable.
The following results can be proven similarly to \cref{defect_stab}.

\begin{lemma}  \label{coop_stab}
    Consider $j_1 \in \{0, \ldots, b\}$, $j'_1 \in \{0, \ldots, b'\}$ where at least one of $j_1 < b$ and $j'_1 < b$ hold.  
    Then
    $\x_{m, j_1, j'_1}$ is a stable equilibrium if and only if \eqref{bestresponder_stab_cond1} and \eqref{bestresponder_stab_cond2} hold for $r=m$, 
    and for $n^C \in \{n_{m, j_1, j'_1}-1,n_{m, j_1, j'_1}, n_{m, j_1, j'_1}+1\}$,
    \begin{equation*}
        C_{j_1, j'_1}(n^C) \geq D_{j_1+1, j'_1+1}(n^C).
    \end{equation*}
\end{lemma}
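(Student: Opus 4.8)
The plan is to mirror the proof of \cref{defect_stab}, interchanging the roles of cooperation and defection and of the payoff comparisons $C_{j_1,j'_1}$ versus $D_{j_1+1,j'_1+1}$. Since \eqref{bestresponder_stab_cond1}--\eqref{bestresponder_stab_cond2} for $r=m$ are strictly tighter than \eqref{aTemperCond}--\eqref{cTemperCond}, and the displayed inequality at $n^C=n_{m,j_1,j'_1}$ is exactly condition of \cref{coop_equil}, \cref{coop_equil} already gives that $\x_{m,j_1,j'_1}$ is an equilibrium, so only the stability claim needs work.

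For sufficiency I would fix $\x(0)$ with $\norm{\x(0)-\x_{m,j_1,j'_1}}=1$, so exactly one agent $p$ plays differently than at the equilibrium, and show $\x(1)\in\{\x(0),\x_{m,j_1,j'_1}\}$ under any activation sequence; the conclusion $\norm{\x(t)-\x_{m,j_1,j'_1}}\le1$ for all $t$ then follows by the same induction as in \cref{defect_stab}. A case split on whether $p$ is an imitator, a best-responder of a benchmark type ($1,\dots,j_1$ among the anticoordinating, $1,\dots,j'_1$ among the coordinating), or a best-responder of a non-benchmark type, pins down $n^C(0)\in\{n_{m,j_1,j'_1}-1,\ n_{m,j_1,j'_1}+1\}$; in particular, since all imitators cooperate at $\x_{m,j_1,j'_1}$, a deviating imitator is defecting. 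Inequalities \eqref{bestresponder_stab_cond1}--\eqref{bestresponder_stab_cond2} then force every best-responder other than $p$ to keep its equilibrium strategy, and because the displayed inequality also holds at $n^C=n_{m,j_1,j'_1}\pm1$, the argument of the sufficiency part of \cref{defect_equil}, with $C$ and $D$ swapped, yields $\C(0)\ge\D(0)$, so no cooperating imitator switches to defection. Hence $\x(1)\in\{\x(0),\x_{m,j_1,j'_1}\}$.

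For necessity I would argue by contradiction, again paralleling \cref{defect_stab}. If \eqref{bestresponder_stab_cond1} (or \eqref{bestresponder_stab_cond2}) fails for $r=m$, non-integrality of the tempers upgrades the weak inequality to a strict one; perturbing the equilibrium by a single agent and then activating a benchmark-type best-responder drives $n^C$ across the relevant temper, producing a state at distance $2$ from $\x_{m,j_1,j'_1}$ — here the hypothesis that not all best-responders cooperate at the equilibrium (``at least one of $j_1<b$ and $j'_1<b'$'') rules out the vacuous configuration. If instead the displayed inequality fails at some $n^C\in\{n_{m,j_1,j'_1}-1,\ n_{m,j_1,j'_1}+1\}$ — it cannot fail at $n_{m,j_1,j'_1}$ because $\x_{m,j_1,j'_1}$ is an equilibrium — I would use $n_i^a\ge2$, $n_i^c\ge2$, and $m\ge1$ to construct a one-agent perturbation in which a cooperating best-responder of the appropriate benchmark type is moved to defection (for the $n^C-1$ case) or a defecting non-benchmark best-responder is moved to cooperation (for the $n^C+1$ case), while a cooperating ``twin'' of the benchmark type remains in place; then $\C(0)>\D(0)$ at the perturbed count, so an activated imitator switches from cooperation to defection, contradicting stability.

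The main obstacle, as in \cref{defect_stab}, is the bookkeeping at the extremes: verifying that each perturbed state actually lies in $\X$, that the benchmark-type agent used still has a twin left cooperating (this is where $n_i^a,n_i^c\ge2$ enter), that a deviating imitator's defection payoff does not spoil $\C(0)\ge\D(0)$ because the cooperating agents of that imitator's own type remain, and handling the degenerate case $j_1=j'_1=0$ (where $n_{m,0,0}=m$ and all cooperators are imitators), which — exactly as the remark after \cref{defect_stab} — is treated separately. These verifications are routine once the right perturbation is chosen, but they are precisely where the stated side conditions are used.
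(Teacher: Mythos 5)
Your proposal follows exactly the route the paper intends: the paper gives no separate proof of \cref{coop_stab}, stating only that it ``can be proven similarly to \cref{defect_stab}'', and your cooperation/defection mirror of that proof, including the case split on the deviating agent $p$, the use of the tightened temper bounds, and the twin argument with $n_i^a,n_i^c\geq2$, is the correct instantiation. Two mirroring slips are worth fixing: in the necessity part, failure of $C_{j_1,j'_1}\geq D_{j_1+1,j'_1+1}$ at a perturbed count must yield $\D(0)>\C(0)$ (not $\C(0)>\D(0)$), which is what makes an activated cooperating imitator defect; and the case requiring separate treatment here is $(j_1,j'_1)=(b,b')$, i.e.\ the all-cooperate state $\x_{m,b,b'}$ excluded by the lemma's hypothesis and handled in the remark following it, whereas $j_1=j'_1=0$ (only imitators cooperating) is a regular case covered by the general argument, with the extra cooperation payoffs of imitators of non-benchmark types dominated via \eqref{bestresponder_stab_cond1}--\eqref{bestresponder_stab_cond2} rather than via the $C_{j_1,j'_1}$ bound alone.
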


For the special case of $j_1=b$ and $j'_1=b$, $\x_{m, j_1, j'_1}$ is a stable equilibrium if and only if it is an equilibrium and $n_{m, b, b'} = n > \tau_b^c+1$.
\begin{lemma}  \label{mix_stab}
    Consider $j_1 \in \{0, \ldots, b\}$, $j'_1 \in \{0, \ldots, b'\}$. 
    Then for any $r \in \{1, \ldots, m-1\}$,
    $\x_{r, j_1, j'_1}$ is a stable equilibrium if and only if \eqref{bestresponder_stab_cond1} and \eqref{bestresponder_stab_cond2} hold,
    and for $n^C \in \{n_{r, j_1, j'_1}-1,n_{r, j_1, j'_1}, n_{r, j_1, j'_1}+1\}$,
    \begin{equation*}
        C_{j_1, j'_1}(n^C) = D_{j_1+1, j'_1+1}(n^C).
    \end{equation*}
\end{lemma}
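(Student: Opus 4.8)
I would follow the blueprint of the proof of \cref{defect_stab} — the lemma is asserted to ``be proven similarly'' — adapting it to the two features of a mixed equilibrium: both cooperating and defecting imitators are present, so the payoff comparison becomes an \emph{equality}, and, because a unit perturbation shifts the cooperator count either up or down, that equality must hold at $n^C\in\{\bar n-1,\bar n,\bar n+1\}$, where $\bar n\triangleq n_{r,j_1,j'_1}$. By \cref{def_stability} it suffices to prove that from any $\x(0)$ with $\norm{\x(0)-\x_{r,j_1,j'_1}}=1$ — so $\x(0)$ differs from $\x_{r,j_1,j'_1}$ in a single coordinate and $n^C(0)\in\{\bar n-1,\bar n+1\}$ — one has $\x(1)\in\{\x(0),\x_{r,j_1,j'_1}\}$ under every activation sequence; induction on $t$ then yields $\norm{\x(t)-\x_{r,j_1,j'_1}}\le1$ for all $t$.

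\textbf{Sufficiency.} The $n^C=\bar n$ instances of the hypotheses are exactly \eqref{aTemperCond}, \eqref{cTemperCond} and \eqref{C=D}, so \cref{mix_equil} already gives that $\x_{r,j_1,j'_1}$ is an equilibrium. Fix $\x(0)$ as above with $n^C(0)=\bar n+\varepsilon$, $\varepsilon\in\{-1,1\}$. Conditions \eqref{bestresponder_stab_cond1}--\eqref{bestresponder_stab_cond2} keep $n^C(0)$ inside $(\tau_{j_1+1}^a,\tau_{j_1}^a)\cap(\tau_{j'_1}^c,\tau_{j'_1+1}^c)$, so by \eqref{abr_update_rule}--\eqref{cbr_update_rule} no best-responder but the possibly-perturbed one switches, and every cooperating benchmark best-responder and every defecting off-benchmark best-responder still \emph{strictly} prefers its strategy at $n^C(0)$. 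For the imitators I would re-run the $\C(0)\ge\D(0)$ and $\D(0)\ge\C(0)$ computation from the proof of \cref{mix_equil}, but evaluated at $n^C(0)$ instead of $\bar n$: since each best-responder type has at least two members, a perturbation cannot destroy all cooperating benchmark best-responders nor remove any off-benchmark best-responder, so $\C(0)\ge C_{j_1,j'_1}(n^C(0))$ and $\D(0)\ge D_{j_1+1,j'_1+1}(n^C(0))$; the strict preferences just noted bound the off-benchmark cooperator utilities below $D_{j_1+1,j'_1+1}(n^C(0))$ and the benchmark defector utilities below $C_{j_1,j'_1}(n^C(0))$, which together with the hypothesised equality $C_{j_1,j'_1}(n^C(0))=D_{j_1+1,j'_1+1}(n^C(0))$ collapses $\C(0)\le C_{b,b'}(n^C(0))$ to $C_{j_1,j'_1}(n^C(0))$ and $\D(0)\le D_{1,1}(n^C(0))$ to $D_{j_1+1,j'_1+1}(n^C(0))$. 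Hence $\C(0)=\D(0)$, no imitator switches, and $\x(1)\in\{\x(0),\x_{r,j_1,j'_1}\}$.

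\textbf{Necessity.} If $\x_{r,j_1,j'_1}$ is stable it is an equilibrium, so by \cref{mix_equil} the conditions hold at $n^C=\bar n$; in particular \eqref{C=D} forces the finite quantity $D_{j_1+1,j'_1+1}(\bar n)$ (finite because $m\ge1$ and the imitators carry best-responder payoff matrices, so a best-responder exists) to equal $C_{j_1,j'_1}(\bar n)$, ruling out the degenerate cases $j_1=j'_1=0$ and $j_1=b,\,j'_1=b'$, in which one of the two sides is $-\infty$. Thus $\x_{r,j_1,j'_1}$ has a cooperating benchmark best-responder and a defecting off-benchmark best-responder, of types with at least two members. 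As in \cref{defect_stab}, \eqref{bestresponder_stab_cond1}--\eqref{bestresponder_stab_cond2} are necessary: if, say, $\bar n\ge\tau_{j_1}^a-1$ (hence strict by non-integrality of the tempers), a single imitator flip ($r\le m-1$) pushes $n^C$ to $\bar n+1>\tau_{j_1}^a$ and then a type-$j_1$ nonconformist flips too, reaching distance $2$; the other boundary subcases and \eqref{bestresponder_stab_cond2} are analogous. Finally, to see the equality at $n^C=\bar n\pm1$ is necessary, suppose it fails, say $C_{j_1,j'_1}(\bar n-1)>D_{j_1+1,j'_1+1}(\bar n-1)$: perturb by letting one cooperating benchmark best-responder defect (its type keeps a cooperating member), so $n^C(0)=\bar n-1$; the pinning computation above, now legitimately invoking \eqref{bestresponder_stab_cond1}--\eqref{bestresponder_stab_cond2}, gives $\C(0)>\D(0)$, so an activated defecting imitator switches to cooperation and the resulting state sits at distance $2$ from $\x_{r,j_1,j'_1}$ — a contradiction. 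The reverse strict inequality and the $n^C=\bar n+1$ subcases are symmetric, perturbing an off-benchmark best-responder and/or flipping a cooperating imitator.

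\textbf{The main obstacle.} The one genuinely new difficulty relative to \cref{defect_stab} is a connectivity issue. Once \eqref{bestresponder_stab_cond1}--\eqref{bestresponder_stab_cond2} are in force, a perturbation confined to an imitator coordinate can never by itself reach a distance-$2$ state: from such a state only imitators can move, and any imitator move merely restores the equilibrium count $r$. Hence the trajectories witnessing the necessity of the payoff equality at $\bar n\pm1$ must first perturb a \emph{best-responder} coordinate and then provoke an \emph{imitator} flip — the reverse of the mechanism used for the temper conditions — which is precisely why the equality is demanded at $\bar n\pm1$ and not just at $\bar n$. Re-establishing, at the shifted cooperator count, the pinning identities $\C(0)=C_{j_1,j'_1}(n^C(0))$ and $\D(0)=D_{j_1+1,j'_1+1}(n^C(0))$ of \cref{mix_equil} in the presence of the extra cooperating or defecting agent created by the perturbation is the crux; the rest is a direct transcription of \cref{defect_stab}.
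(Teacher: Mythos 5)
Your proposal is correct and takes essentially the paper's intended route: the paper gives no separate proof of \cref{mix_stab}, stating only that it ``can be proven similarly to \cref{defect_stab},'' and that is precisely the blueprint you follow, grafting in the $\C=\D$ pinning argument from \cref{mix_equil} and invoking the standing assumptions $n_i^a,n_i^c\geq 2$, $1\leq r\leq m-1$ exactly where they are needed. One small caveat: the assertion in your ``main obstacle'' paragraph that a perturbation confined to the imitator coordinate ``can never by itself reach a distance-$2$ state'' is false — for instance, if $C_{j_1,j'_1}(n_{r,j_1,j'_1}+1) > D_{j_1+1,j'_1+1}(n_{r,j_1,j'_1}+1)$ and $m-r\geq 2$, flipping one defecting imitator makes $\C>\D$ and a second defecting imitator then switches, yielding distance $2$ — but this remark is not load-bearing, since your necessity argument correctly goes through best-responder perturbations followed by an induced imitator flip (your alternative ``flip a cooperating imitator'' route is the only piece that can break, in the corner cases $r=1$ or $m-r=1$).
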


The condition in the above lemma is strong since it requires $\x_{r-1, j_1, j'_1}$, $\x_{r, j_1, j'_1}$ and $\x_{r+1, j_1, j'_1}$ to be equilibria.
Namely, the mixed equilibrium must be surrounded by adjacent other equilibria in the state space.
It is, thus, generally ``unlikely'' for a mixed equilibrium to be stable.
The following theorem summarizes \cref{defect_stab}, \cref{coop_stab} and \cref{mix_stab}.
\begin{theorem}
    For $r \in \{0, \ldots, m\}$, $j_1 \in \{0, \ldots, b\}$ and $j'_1 \in \{0, \ldots, b'\}$ where $(r, j_1, j_1) \notin \{(0, 0, 0), (m, b, b')\}$, $\x_{r, j_1, j'_1}$ is a stable equilibrium if and only if \eqref{bestresponder_stab_cond1} and \eqref{bestresponder_stab_cond2} hold,
    and for $n^C \in \{n_{r, j_1, j'_1}-1,n_{r, j_1, j'_1}, n_{r, j_1, j'_1}+1\}$,
    \begin{equation*}
        C_{j_1, j'_1} \leq D_{j_1, j'_1} \text{ if } r <m \text{, and } C_{j_1, j'_1} \geq D_{j_1, j'_1} \text{ if } r > 0. 
    \end{equation*}
\end{theorem}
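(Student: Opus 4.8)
The plan is to derive this theorem directly from Lemmas~\ref{defect_stab}, \ref{coop_stab}, and~\ref{mix_stab} by partitioning the range of $r$ and verifying that the single combined inequality in the statement specializes to the correct lemma in each regime. The standing assumptions $m\geq1$, $n_i^a\geq2$, and $n_i^c\geq2$ remain in force, and I would use that $\{0\}$, $\{1,\ldots,m-1\}$, and $\{m\}$ together exhaust $\{0,\ldots,m\}$ (the middle set being empty when $m=1$). By \cref{equil_form} every equilibrium is of the form $\x_{r,j_1,j'_1}$, so the characterization below is in fact complete.

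First I would treat $r=0$. Since $(r,j_1,j'_1)\neq(0,0,0)$ by hypothesis, at least one of $j_1,j'_1$ is nonzero, which is exactly the hypothesis of \cref{defect_stab}; applying it, $\x_{0,j_1,j'_1}$ is a stable equilibrium if and only if \eqref{bestresponder_stab_cond1} and \eqref{bestresponder_stab_cond2} hold and $C_{j_1,j'_1}(n^C)\le D_{j_1+1,j'_1+1}(n^C)$ for $n^C\in\{n_{0,j_1,j'_1}-1,n_{0,j_1,j'_1},n_{0,j_1,j'_1}+1\}$. Because $m\geq1$, here $r<m$ holds while $r>0$ fails, so the combined condition of the theorem retains only the ``$\le$'' clause, matching the lemma. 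Symmetrically, for $r=m$ the hypothesis $(r,j_1,j'_1)\neq(m,b,b')$ forces $j_1<b$ or $j'_1<b'$, which is the hypothesis of \cref{coop_stab}; applying it gives the mirror characterization with ``$\ge$'', and since $r=m>0$ holds while $r<m$ fails, the combined condition keeps only the ``$\ge$'' clause. Finally, for $1\le r\le m-1$, \cref{mix_stab} applies for all $j_1,j'_1$ and, besides \eqref{bestresponder_stab_cond1} and \eqref{bestresponder_stab_cond2}, requires the equality $C_{j_1,j'_1}(n^C)=D_{j_1+1,j'_1+1}(n^C)$ over the same three values of $n^C$; here both $r<m$ and $r>0$ hold, so the combined condition demands ``$\le$'' and ``$\ge$'' simultaneously, i.e.\ equality, again in agreement.

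Collecting the three cases yields the stated equivalence for every admissible $(r,j_1,j'_1)$. The only care needed is bookkeeping: checking that the excluded corner states $(0,0,0)$ and $(m,b,b')$ are precisely those where the extra hypotheses of \cref{defect_stab} and \cref{coop_stab} fail — so that these two states legitimately require the separate one-line remarks already supplied in the text — and recording that the abbreviated $D_{j_1,j'_1}$ written in the theorem is shorthand for the $D_{j_1+1,j'_1+1}$ used in the three lemmas. I do not expect any substantive obstacle here; all of the real work sits in Lemmas~\ref{defect_stab}, \ref{coop_stab}, and~\ref{mix_stab}, whose proofs in turn reduce stability to the ``radius-one'' test of \cref{def_stability} via a case analysis on the identity of the single deviating agent.
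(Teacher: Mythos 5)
Your proposal is correct and takes essentially the same route as the paper: the theorem is presented there precisely as a summary of Lemmas~\ref{defect_stab}, \ref{coop_stab} and~\ref{mix_stab}, obtained by exactly the case split on $r\in\{0\}$, $\{1,\ldots,m-1\}$, $\{m\}$ that you describe, with the excluded corner states $(0,0,0)$ and $(m,b,b')$ being those where the extra hypotheses of Lemmas~\ref{defect_stab} and~\ref{coop_stab} fail. Your reading of $D_{j_1,j'_1}$ in the theorem as shorthand for the $D_{j_1+1,j'_1+1}$ of the lemmas is also the intended one.
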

\section{Positively invariant sets}    \label{sec_invariant}
As seen in Example \ref{fluc_equil_eg}, the population may never settle down and undergo perpetual fluctuations, even though it admits an equilibrium.
In that case, the population is stranded in a set of states.
We define a non-empty set $\setS\subseteq\X$ to be \emph{positively invariant} if $\x(0)\in \setS$ implies $\x(t) \in \setS\forall t\geq 0$, under any activation sequence.
For simplicity, we refer to a positively invariant set as an \emph{invariant set}.
An invariant set is \emph{minimal} if it does not contain any other invariant set except its two trivial subsets, itself and the empty set.
Similar to \cite{imitation}, it is easy to show that starting from any initial condition, the population dynamics will reach a minimal invariant set $\setO$ in finite time and stay there hence.
If the minimal invariant set is a singleton, then it is basically an equilibrium state. 
Otherwise, it is a set of more than one states, each will be visited by the solution trajectory infinitely often:
\begin{lemma}[\cite{imitation}]\label{revisit_inv}
    Consider a minimal invariant set $\setO$.
    If $\x(0) \in \setO$, then for any state $\bar{\x} \in \setO$ and any $T\geq 0$, there exists some time $t\geq T$ such that $\x(t) = \bar{\x}$.
\end{lemma}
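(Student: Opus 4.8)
The plan is to prove the lemma in two moves: first establish a \emph{reachability} property of minimal invariant sets, and then use a concatenation argument over the finite state space to upgrade reachability into infinitely-often recurrence along a single solution trajectory. Throughout I would exploit the multivalued nature of the dynamics: a ``solution trajectory'' is any sequence $\x(0), \x(1), \ldots$ generated by \emph{some} activation sequence.

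\textbf{Step 1 (reachability).} Fix an arbitrary $\x \in \setO$ and let $\mathcal{R}(\x)$ be the set of all states that appear on some solution trajectory emanating from $\x$; that is, $\y \in \mathcal{R}(\x)$ if and only if there is an activation sequence and a time $t \geq 0$ with $\x(0) = \x$ and $\x(t) = \y$. I would verify three facts: $\mathcal{R}(\x)$ is nonempty since it contains $\x$; $\mathcal{R}(\x) \subseteq \setO$ because $\setO$ is positively invariant and $\x \in \setO$; and $\mathcal{R}(\x)$ is itself positively invariant, because any one-step successor of a state $\y \in \mathcal{R}(\x)$ is, by splicing the activation sequence reaching $\y$ with one more step, again reachable from $\x$. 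Since $\setO$ is minimal and $\mathcal{R}(\x)$ is a nonempty invariant subset of $\setO$, minimality forces $\mathcal{R}(\x) = \setO$. As $\x \in \setO$ was arbitrary, every state of $\setO$ is reachable from every state of $\setO$ (and, by invariance, via trajectories lying entirely in $\setO$).

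\textbf{Step 2 (construction of a recurrent trajectory) and conclusion.} Since $\X$ is finite, so is $\setO$; write $\setO = \{\x^{(1)}, \ldots, \x^{(N)}\}$. Starting from the given $\x(0) \in \setO$, apply Step 1 repeatedly to splice together finite activation-sequence segments that steer $\x(0)$ to $\x^{(1)}$, then $\x^{(1)}$ to $\x^{(2)}$, \ldots, then $\x^{(N)}$ back to $\x^{(1)}$, and repeat this cycle forever; each segment keeps the trajectory inside $\setO$ by invariance. This produces one infinite activation sequence whose trajectory from $\x(0)$ passes through each $\x^{(k)}$ at infinitely many, hence unbounded, times (if $N \geq 2$ each pass around the cycle consumes at least one step; if $N = 1$ the set is a singleton equilibrium and the statement is immediate). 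Now given $\bar\x \in \setO$ and $T \geq 0$, we have $\bar\x = \x^{(k)}$ for some $k$, and since $\x^{(k)}$ is visited at arbitrarily large times, there exists $t \geq T$ with $\x(t) = \bar\x$.

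\textbf{Main obstacle.} The only subtle point is the invariance claim for $\mathcal{R}(\x)$ in Step 1: one must argue carefully, using that the dynamics are multivalued (parametrized by activation sequences), that successors of reachable states are reachable, and confirm $\mathcal{R}(\x) \subseteq \setO$ before invoking minimality. Everything else — the finiteness of $\X$, the cycling construction, and the bookkeeping that times go to infinity — is routine; this is essentially the standard minimal-invariant-set recurrence argument adapted to the present setting, in line with \cite{imitation}.
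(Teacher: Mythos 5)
The paper never proves \cref{revisit_inv} — it is imported from \cite{imitation} — so there is no in-paper argument to compare yours against; judged on its own terms, your proof is the standard recurrence argument for minimal invariant sets and it is sound under the reading you adopt. Your Step 1 is correct: $\mathcal{R}(\x)$ is nonempty, lies in $\setO$ by positive invariance, and is itself positively invariant because the update rules are time-homogeneous and depend only on the current state and the active agent, so prefixes of activation sequences can be concatenated; minimality then forces $\mathcal{R}(\x)=\setO$, and the splicing/cycling construction over the finite set $\setO$ (with the singleton case handled separately, where $\setO$ is an equilibrium by definition) correctly produces one activation sequence along which every state of $\setO$ is visited at unbounded times.

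The one point you should make explicit is the quantifier over activation sequences, because it is exactly where your statement and the paper's later use of the lemma diverge. What you prove is: \emph{there exists} an activation sequence realizing the recurrence. The paper, however, invokes \cref{revisit_inv} in the proofs of \cref{inv0} and \cref{supsetI} as if recurrence held along an arbitrarily constructed trajectory: there, a specific activation sequence is exhibited under which some $\x\in\setO$ is never revisited, and this alone is declared a contradiction. Read with a universal quantifier over activation sequences, the lemma is false as stated: if one fixed agent is activated at every step, the entire trajectory stays in $\{\x(0),\x(1)\}$, which in general is a strict, non-invariant subset of $\setO$, so a larger minimal invariant set is certainly not swept out; some persistence assumption on activations would be needed even to hope for the universal form, and your argument does not (and is not designed to) deliver it. The good news is that your Step 1 already suffices to repair those applications: e.g., in \cref{inv0}, once the defecting type-$i$ nonconformist switches to cooperation she can never switch back while the state remains in $\setO$, hence the original state is not reachable from the new one, contradicting $\mathcal{R}(\cdot)=\setO$. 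So state clearly that your lemma is existential (or phrase it directly as the reachability property), and note that this is the form actually needed downstream.
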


Our ultimate goal is to explicitly characterize all minimal invariant sets.
This, however, turns out to be an intricate problem. 
Instead, in each of the following subsections, we find necessary conditions for a set $\setO$ to be minimally invariant. 
Namely we define a set, say $\mathcal{A}$, satisfying the necessary conditions, and prove that $\setO\subseteq\mathcal{A}$.
This partially informs us about the form of $\setO$ and helps us to limit our search in finding the minimal invariant sets.
Ideally, if the conditions are sharp enough, we would have $\mathcal{A} = \setO$, which is not the case with this paper and remains an open problem. 
Instead, we find necessary and sufficient conditions for the set $\mathcal{A}$ to be invariant as then the existence of a minimal invariant subset $\setO\subseteq\mathcal{A}$ is guaranteed.

\subsection{Benchmark types}
Consider a invariant set $\setO$ and a solution trajectory $\x(t)$ with the initial condition $\x(0)\in\X$. 
The number of cooperators will then be confined to the interval $[\min_{\x\in\setO} n^C(\x), \max_{\x\in\setO} n^C(\x)]$, called a \emph{fluctuation interval}.
All nonconformists whose tempers are higher than the maximum number of cooperators in the fluctuation interval must be cooperating at any state in $\setO$ since, otherwise, they will switch from defection to cooperation upon activation and never change back, implying the existence of a non-trivial subset of $\setO$ which is invariant, leading to a contradiction.
Similarly, all nonconformists whose tempers are lower than the minimum number of cooperators in the fluctuation interval must be defecting.
We, hence, expect the existences of anticoordinating benchmark types $j_1, j_2 \in \{0, \ldots, b+1\}$, $j_1 < j_2$, such that all nonconformists of types $1, \ldots, j_1$ cooperate and all nonconformists of types $j_2, \ldots, b$ defect at every state in $\setO$.
By following a similar argument, we expect the existence of coordinating benchmark types $j'_1, j'_2 \in \{0, \ldots, b'+1\}$, $j'_1 < j'_2$, such that all conformists of types $1, \ldots, j'_1$ cooperate and all conformists of types $j'_2, \ldots, b'$ defect at every state in $\setO$.
Define $\setJ$ to be the set of all possible combinations of $(j_1, j_2, j'_2, j'_1)$, i.e., 
\begin{align*}
    \setJ \triangleq \Big\{(j_1, j_2, j'_2, j'_1) \,|\,
    & j_1\in \{0, \ldots, b\}, j_2 \in \{j_1+1, \ldots, b+1\}\\
    &j'_1\in \{0, \ldots, b'\}, j'_2 \in \{j'_1+1, \ldots, b'+1\} \Big\}.
\end{align*}
For $(j_1, j_2, j'_2, j'_1) \in \setJ$, define the set
\begin{align*}
    \mathcal{X}_{j_1, j_2, j'_2, j'_1} \triangleq
    \Big\{\x \in \X\,|\, 
    x_i^a = n_i^a&\ \forall i \in \left\{1, \ldots, j_1\right\}, \quad
    x_i^a = 0\ \forall i \in \left\{j_2, \ldots, b\right\}, \\
    x_i^c = n_i^c&\ \forall i \in \left\{1, \ldots, j'_1\right\}, \quad
    x_i^c = 0\ \forall i \in \left\{j'_2, \ldots, b'\right\}\Big\}.
\end{align*}
In what follows, we consider an arbitrary minimal invariant set $\setO$, and correspondingly define the minimum and maximum number of cooperators in the set, i.e.,  
$S = \min_{\x\in \mathcal{O}} n^C(\x)$, and $L = \max_{\x\in\mathcal{O}} n^C(\x)$, and the benchmark types 
\begin{equation}\label{O:j_1}
    \xi_1 \triangleq \max \left\{j \in \left\{0, b+1\right\}\,|\, \tau_j^a > L\right\},
\end{equation}
\begin{equation}\label{O:j_2}
    \xi_2 \triangleq \min \left\{j \in \left\{0, b+1\right\}\,|\, \tau_j^a < S\right\},
\end{equation}
\begin{equation}\label{O:j'_2}
    \xi'_2 \triangleq \min \left\{j \in \left\{0, b'+1\right\}\,|\, \tau_j^c > L\right\},
\end{equation}
\begin{equation} \label{O:j'_1}
    \xi'_1 \triangleq \max \left\{j \in \left\{0, b'+1\right\}\,|\, \tau_j^c < S\right\}.
\end{equation}
As a necessary condition for a set to be invariant, we prove that it must be included in $\mathcal{X}_{\xi_1, \xi_2, \xi'_2, \xi'_1}$.
\begin{proposition} \label{inv0}
    $\mathcal{O}\subseteq \mathcal{X}_{\xi_1, \xi_2, \xi'_2, \xi'_1}$.
\end{proposition}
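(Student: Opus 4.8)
The plan is to establish the four defining conditions of $\mathcal{X}_{\xi_1, \xi_2, \xi'_2, \xi'_1}$ separately, each by exhibiting an invariant subset of $\setO$ and invoking minimality. Throughout I use that $\setO$ is invariant, so every trajectory starting in $\setO$ remains in $\setO$ and hence satisfies $n^C(\x(t)) \in [S, L]$ for all $t \geq 0$ under any activation sequence.

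\emph{Nonconformists of types $1, \ldots, \xi_1$.} Let $H \triangleq \{\x \in \setO \,|\, x_i^a = n_i^a \text{ for all } i \in \{1, \ldots, \xi_1\}\}$; if $\xi_1 = 0$ then $H = \setO$ and there is nothing to prove. First, $H$ is invariant: if $\x(0) \in H$, then for all $t$ we have $\x(t) \in \setO$, so $n^C(\x(t)) \leq L < \tau_i^a$ for every $i \leq \xi_1$ by \eqref{O:j_1} and \eqref{TemperOrder}; hence a cooperating type-$i$ nonconformist with $i \leq \xi_1$ never switches to defection upon activation, and by induction on $t$ all of them cooperate at every $\x(t)$, i.e.\ $\x(t) \in H$. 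Second, $H \neq \emptyset$: starting from an arbitrary $\x(0) \in \setO$, choose the activation sequence that, as long as some nonconformist of a type in $\{1, \ldots, \xi_1\}$ is defecting, activates one such agent; since $n^C \leq L < \tau_i^a$ at that moment, she switches to cooperation in view of \eqref{abr_update_rule}, strictly decreasing the number of defecting nonconformists among types $1, \ldots, \xi_1$; after at most $\sum_{i=1}^{\xi_1} n_i^a$ steps the trajectory reaches a state of $\setO$ lying in $H$. Being a nonempty invariant subset of the minimal invariant set $\setO$, $H$ equals $\setO$, so every nonconformist of a type in $\{1, \ldots, \xi_1\}$ cooperates at every state of $\setO$.

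The other three conditions follow from the same template, with the roles of $L$ and $S$, of \eqref{O:j_1}, \eqref{O:j_2}, \eqref{O:j'_2}, \eqref{O:j'_1}, and of the update rules \eqref{abr_update_rule}, \eqref{cbr_update_rule} swapped as appropriate: for nonconformists of types $\xi_2, \ldots, b$ one uses $n^C \geq S > \tau_i^a$ (a cooperating such agent switches to defection and thereafter stays there); for conformists of types $1, \ldots, \xi'_1$ one uses $n^C \geq S > \tau_i^c$; and for conformists of types $\xi'_2, \ldots, b'$ one uses $n^C \leq L < \tau_i^c$. Each case produces a nonempty invariant subset of $\setO$, hence equal to $\setO$. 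Since $\mathcal{X}_{\xi_1, \xi_2, \xi'_2, \xi'_1}$ is exactly the set of states at which all four benchmark conditions hold, intersecting the four equalities gives $\setO \subseteq \mathcal{X}_{\xi_1, \xi_2, \xi'_2, \xi'_1}$.

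The step I expect to be most delicate is the nonemptiness argument: one must check that the ``repairing'' activation sequence is legitimate, meaning every intermediate state it produces is still in $\setO$ — this is what pins $n^C$ to $[S, L]$ and guarantees that each repair move goes in the intended direction — and that the process terminates. The definitions \eqref{O:j_1}, \eqref{O:j_2}, \eqref{O:j'_2}, \eqref{O:j'_1} are tailored precisely so that the required strict temper inequality holds for \emph{every} value $n^C$ can take in $[S, L]$, which is exactly what validates each repair move; the invariance and minimality steps are then routine.
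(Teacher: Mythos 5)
Your proof is correct, but it takes a genuinely different route from the paper's. The paper argues by contradiction: assuming some $\x\in\setO$ violates one of the four benchmark conditions, it activates the offending best-responder first, observes that she switches and---since $n^C(t)$ stays in $[S,L]$ and hence on the correct side of her temper by \eqref{O:j_1}--\eqref{O:j'_1} and \eqref{TemperOrder}---never switches back, so that $\x$ is never revisited, contradicting \cref{revisit_inv}. You bypass \cref{revisit_inv} entirely: for each condition you form the subset $H\subseteq\setO$ of states satisfying it, show $H$ is invariant (using the same temper inequality), show $H\neq\emptyset$ via a finite ``repairing'' activation sequence whose intermediate states remain in $\setO$ so the inequality keeps applying, and then conclude $H=\setO$ from minimality. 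The two arguments hinge on identical inequalities, but your scaffolding is more self-contained: it uses only the definitions of invariance and minimality, whereas the paper leans on the revisitation lemma imported from \cite{imitation}, whose literal statement under arbitrary activation sequences is delicate; the price is the extra legitimacy/termination check for the repair sequence, which you identify and handle correctly. One cosmetic point: the nonconformists' rule you invoke is \eqref{cbr_update_rule}, not \eqref{abr_update_rule} (the paper's own proofs contain the same label slip).
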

\begin{proof}
We prove by contradiction.
Suppose on the contrary, that there exists $\x\in \mathcal{O}$ such that $\x\notin \mathcal{X}_{\xi_1, \xi_2, \xi'_2, \xi'_2}$. 
Then, one of the following cases holds:

\emph{Case 1:} 
$x_i^a < n_i^a$ for some $i\in \{1,\ldots, \xi_1\}$.
Consider the initial state $\x(0)=\x$.
Under an activation sequence where the first active agent is a defecting nonconformist of type $i$, she will switch to cooperation and never change back since for any $t\geq 0$, $\x(t) \in \setO$ implies that $n^C(t) \leq L \overset{\eqref{O:j_1}}{<} \tau_{\xi_1}^a \overset{\eqref{TemperOrder}}{\leq} \tau_i^a$.
This implies that $\x(t) \in \setO \backslash \{\x\}$ for all $t\geq 1$, contradicting \cref{revisit_inv}.

Similarly, we can prove that \emph{Case 2:} $x_i^c < n_i^c$ for some $i\in \{1,\ldots, \xi'_1\}$, \emph{Case 3:} $x_i^a > n_i^a$ for some $i\in \{\xi_2, \ldots, b\}$ and \emph{Case 4:} $x_i^c > 0$ for some $i\in \{\xi'_2, \ldots, b'\}$, all result in a contradiction, completing the proof.
\end{proof}

So when the population reaches a state in $\mathcal{O}$, all nonconformists of types $1, \ldots, \eta_1$ and conformists of types $1, \ldots, \eta'_1$ fix their strategies to cooperation, and all nonconformists of types $\eta_2, \ldots, m$ and conformists of types $\eta'_2, \ldots, m$ fix their strategies to defection.
We call these best-responders \emph{fixed} and refer to other best-responders as \emph{wandering}.
Now, we show that all wandering conformists defect when the number of cooperators reaches its minimum and cooperate when it reaches its maximum.
\begin{lemma}\label{coor_atmin}
    Consider a state $\x\in\mathcal{O}$.
    If $n^C(\x) = S$, then 
    $x_i^c = 0\ \forall i\in \left\{\xi'_1+1, \ldots, \xi'_2-1\right\}$, and
    if $n^C(\x) = L$, then
    $x_i^c = n_i^c\ \forall i\in \left\{\xi'_1+1, \ldots, \xi'_2-1\right\}$.
\end{lemma}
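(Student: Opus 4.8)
The plan is a two-sided contradiction argument combining the invariance of $\setO$ with the conformist update rule \eqref{abr_update_rule}. First I would record the key fact that every wandering conformist has its temper strictly inside the fluctuation interval: for $i\in\{\xi'_1+1,\ldots,\xi'_2-1\}$, the maximality defining $\xi'_1$ in \eqref{O:j'_1} gives $\tau_i^c\not<S$ while the minimality defining $\xi'_2$ in \eqref{O:j'_2} gives $\tau_i^c\not>L$, and since the tempers are non-integer whereas $S$ and $L$ are integers, this upgrades to $S<\tau_i^c<L$. (If no such $i$ exists the statement is vacuous, so assume one does; also recall from \cref{inv0} that the wandering conformists are exactly those of types $\xi'_1+1,\ldots,\xi'_2-1$.)

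For the minimum, suppose for contradiction that $\x\in\setO$ with $n^C(\x)=S$ yet $x_i^c>0$ for some wandering type $i$, so at least one type-$i$ conformist cooperates at $\x$. Starting from $\x(0)=\x$ and choosing an activation sequence that makes this conformist active at time $0$, the inequality $n^C(0)=S<\tau_i^c$ together with \eqref{abr_update_rule} forces her to switch from cooperation to defection; since activation is asynchronous, no other agent changes, so $n^C(1)=S-1$. But $\x(1)\in\setO$ by invariance, contradicting $S=\min_{\x\in\setO}n^C(\x)$. Hence $x_i^c=0$ for every wandering type $i$, which is the first claim.

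The maximum case is the mirror image: if $\x\in\setO$ with $n^C(\x)=L$ and some type-$i$ conformist defects at $\x$ for a wandering $i$, then activating that conformist at time $0$ and using $n^C(0)=L>\tau_i^c$ with \eqref{abr_update_rule} makes her switch to cooperation, giving $n^C(1)=L+1>L$ while $\x(1)\in\setO$, again impossible; so every wandering type-$i$ conformist cooperates, i.e.\ $x_i^c=n_i^c$. I do not expect a genuine obstacle here: this is the same ``once the count passes a temper, that best-responder is pinned'' mechanism used in the proof of \cref{inv0}, now applied at the two extreme states of $\setO$. The only delicate points are the strict inequalities $S<\tau_i^c<L$, which rely on the non-integer-temper hypothesis, and making sure the activated conformist genuinely exists (guaranteed by $x_i^c>0$, resp.\ $x_i^c<n_i^c$).
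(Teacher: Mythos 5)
Your proposal is correct and follows essentially the same route as the paper: a contradiction via invariance of $\setO$, activating a single wandering conformist at a state realizing $S$ (resp. $L$) and using $S<\tau_i^c$ (resp. $\tau_i^c<L$), which the paper obtains through the chain $S<\tau_{\xi'_1+1}^c\leq\tau_i^c$ from \eqref{O:j'_1} and \eqref{TemperOrder} (and symmetrically from \eqref{O:j'_2}), just as you do via maximality/minimality plus the non-integer-temper assumption. Your explicit remarks on the strictness of the inequalities and the existence of the activated agent are fine refinements of the same argument, so nothing further is needed.
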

\begin{proof}
    First, we prove the first statement by contradiction.
    Suppose on the contrary there exists $\x \in \mathcal{O}$ such that $n^C(\x) = S$ and there is a cooperating conformist of type $i \in \left\{\xi'_1+1, \ldots, \xi'_2-1\right\}$.
    Consider the initial state $\x(0) =\x$.
    Under an activation sequence where a cooperating conformist of type $i$ is active at time $0$, she will switch to defection, because $S \overset{\eqref{O:j'_1}}{<} \tau_{\xi'_1+1}^c\overset{\eqref{TemperOrder}}{\leq} \tau_i^c$.
    This results in $n^C(1) = S-1 < S$, implying $\x(1)\notin \mathcal{O}$, contradicting the invariance of $\mathcal{O}$.
    The second part can be proved similarly.
\end{proof}

The wandering nonconformists do not necessarily form such a harmony at the extreme number of cooperators. 
Nevertheless, none of them may fix their strategies when the population dynamics are limited to $\setO$.
\begin{lemma}   \label{lemma_forEvery1}
    For every $i\in \left\{\xi_1+1, \ldots, \xi_2-1\right\}$, there exist $\x, \y \in\mathcal{O}$ such that $x_i^a \neq y_i^a$.
\end{lemma}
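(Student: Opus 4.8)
The plan is to argue by contradiction, using only the invariance of $\setO$, in the same spirit as the proofs of \cref{inv0} and \cref{coor_atmin}. Fix $i\in\left\{\xi_1+1,\ldots,\xi_2-1\right\}$; if this index set is empty the statement holds vacuously, so assume it is not. Suppose, toward a contradiction, that the number of cooperating type-$i$ nonconformists is the same at every state of $\setO$, and call this common value $k$, so that $x_i^a=k$ for all $\x\in\setO$.

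The first step is to locate $\tau_i^a$ strictly between $S$ and $L$. Since $i\geq\xi_1+1$, \eqref{TemperOrder} gives $\tau_i^a\leq\tau_{\xi_1+1}^a$, and the maximality defining $\xi_1$ in \eqref{O:j_1} rules out $\tau_{\xi_1+1}^a>L$; as the tempers are non-integers and $L\in\Z$, this forces $\tau_i^a\leq\tau_{\xi_1+1}^a<L$. Symmetrically, from $i\leq\xi_2-1$, \eqref{TemperOrder}, and the minimality defining $\xi_2$ in \eqref{O:j_2}, one gets $\tau_i^a\geq\tau_{\xi_2-1}^a>S$. Hence $S<\tau_i^a<L$; in particular a type-$i$ nonconformist exists, so $0\leq k\leq n_i^a$ with $n_i^a\geq 1$, and therefore at least one of $k\geq 1$ and $k<n_i^a$ holds.

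The second step splits into those two cases. If $k\geq1$, choose $\x\in\setO$ with $n^C(\x)=L$ (such a state exists by the definition of $L$); by hypothesis it contains a cooperating type-$i$ nonconformist, and since $n^C(\x)=L>\tau_i^a$, under an activation sequence in which she is the first active agent she switches to defection by \eqref{cbr_update_rule}, so the successor state $\x(1)$ has $x_i^a=k-1$. If instead $k<n_i^a$, choose $\x\in\setO$ with $n^C(\x)=S$; it contains a defecting type-$i$ nonconformist, and since $n^C(\x)=S<\tau_i^a$ she switches to cooperation when first active, so $\x(1)$ has $x_i^a=k+1$. In either case $\x(1)\in\setO$ by the invariance of $\setO$, yet $x_i^a(\x(1))\neq k$, contradicting the assumption that $x_i^a$ is constant on $\setO$. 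This proves the lemma. There is no real obstacle in this argument; the only points requiring a little care are using the non-integrality of the tempers to sharpen the weak inequalities produced by \eqref{O:j_1}--\eqref{O:j_2} into strict ones, and checking that the dichotomy $k\geq1$ or $k<n_i^a$ is exhaustive so that one of the two one-step transitions is always available.
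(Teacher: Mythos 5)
Your proof is correct and follows essentially the same route as the paper's: assume $x_i^a$ is frozen at some value $k$ on $\setO$, pick a state of $\setO$ attaining the extreme cooperator count ($S$ if $k<n_i^a$, $L$ if $k\geq 1$), and activate a type-$i$ nonconformist, whose temper lies strictly between $S$ and $L$ by \eqref{O:j_1}, \eqref{O:j_2} and \eqref{TemperOrder}, to produce a state of $\setO$ with a different $x_i^a$. Your version merely makes explicit a few points the paper leaves implicit (the strict inequalities via non-integer tempers, the invariance step $\x(1)\in\setO$, and the exhaustiveness of the case split), which is fine.
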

\begin{proof}
    We prove by contradiction.
    Suppose the contrary that there exist $i\in \left\{\xi_1+1, \ldots, \xi_2-1\right\}$ and $k \in \{0, \ldots, n_i^a\}$ such that for all $\x\in \mathcal{O}$, $x_i^a = k$. 
    One of the following two cases must be in force:
    
    \emph{Case 1}: $k< n_i^a$.
    Consider an initial state $\x(0) \in \setO$ with $n^C(0) = S$.
    Then $x_i^a(0)= k< n_i^a$ implies the existence of a defecting nonconformist of type $i$.
    Under an activation sequence where she is the first active agent, she will switch to cooperation because $S \overset{\eqref{O:j_2}}{<}\tau_{\xi_2-1}^a \overset{\eqref{TemperOrder}}{\leq} \tau_i^a$.
    This results in $x_i^a(1)=k+1$, which contradicts the assumption of $x_i^a$ being fixed to $k$.
    
    \emph{Case 2}: $k>0$.
    Similarly, we can show that this also leads to a contradiction, completing the proof.
\end{proof}

So when the population dynamics are in $\setO$, the wandering nonconformists of the same types are neither all fixed to cooperation nor all to defection.
\begin{lemma}
    For every $i\in \left\{\xi_1+1, \ldots, \xi_2-1\right\}$, there exists $\x,\y\in\mathcal{O}$ such that $0< x_i^a$ and $y_i^a < n_i^a$.
\end{lemma}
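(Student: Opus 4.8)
The plan is to obtain this as an immediate consequence of \cref{lemma_forEvery1}. Fix $i\in\{\xi_1+1,\ldots,\xi_2-1\}$; if this index set is empty there is nothing to prove. By \cref{lemma_forEvery1} the quantity $x_i^a$ is not constant as $\x$ ranges over $\mathcal{O}$. Since $x_i^a$ takes values in the integer set $\{0,\ldots,n_i^a\}$, non-constancy on $\mathcal{O}$ forces two separate facts: $x_i^a$ is not identically $0$ on $\mathcal{O}$, so some $\x\in\mathcal{O}$ has $x_i^a\geq 1>0$; and $x_i^a$ is not identically $n_i^a$ on $\mathcal{O}$, so some $\y\in\mathcal{O}$ has $y_i^a\leq n_i^a-1<n_i^a$. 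These two states --- which need not coincide --- are exactly what is asserted. Equivalently, one can take $\x$ to maximize and $\y$ to minimize $x_i^a$ over the finite set $\mathcal{O}$.

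For completeness I would also record a direct proof that does not invoke \cref{lemma_forEvery1}, in the spirit of \cref{coor_atmin}. From \eqref{O:j_1}, \eqref{O:j_2} and \eqref{TemperOrder}, together with the tempers being non-integers while $S$ and $L$ are integers, one checks that $S<\tau_i^a<L$ for every $i\in\{\xi_1+1,\ldots,\xi_2-1\}$. If $x_i^a=0$ at every state of $\mathcal{O}$, choose $\x(0)\in\mathcal{O}$ with $n^C(0)=S$; there is then a defecting type-$i$ nonconformist, who upon activation switches to cooperation by \eqref{cbr_update_rule} since $n^C(0)=S<\tau_i^a$, so invariance of $\mathcal{O}$ yields $\x(1)\in\mathcal{O}$ with $x_i^a(1)=1$ --- a contradiction. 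Dually, if $x_i^a=n_i^a$ at every state of $\mathcal{O}$, starting from a state with $n^C(0)=L$ and activating a cooperating type-$i$ nonconformist (who switches to defection because $L>\tau_i^a$) produces a state in $\mathcal{O}$ with $x_i^a<n_i^a$, again a contradiction.

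There is no genuine obstacle here: the statement is essentially a corollary of \cref{lemma_forEvery1}. The only things warranting care are that the two claimed states are the content of two independent existence statements (so one should not insist on realizing both at a single state), and --- in the direct version --- the short verification that $\tau_i^a$ lies strictly between the integers $S$ and $L$, which is exactly where non-integrality of the tempers is used.
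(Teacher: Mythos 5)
Your main argument is correct and is essentially the paper's own proof: the paper likewise deduces the claim from \cref{lemma_forEvery1}, arguing by contradiction that otherwise $x_i^a$ would be identically $0$ or identically $n_i^a$ on $\mathcal{O}$, which is exactly your non-constancy observation stated contrapositively. Your supplementary direct argument (using $S<\tau_i^a<L$ and activating a defecting or cooperating type-$i$ nonconformist at a state attaining $S$ or $L$) is also sound, but it simply replays the proof of \cref{lemma_forEvery1} specialized to $k=0$ and $k=n_i^a$, so it adds nothing beyond the paper's route.
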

\begin{proof}
    We prove by contradiction. 
    Assume on the contrary, that no such pair of $\x$ and $\y$ exists. 
    Let $i\in \left\{\xi_1+1,\ldots, \xi_2-1\right\}$.
    Then either for every state $z\in\mathcal{O}$, $z_i^a = 0$ or for every state $z\in\mathcal{O}$, $z_i^a = n_i^a$, both contradicting \cref{lemma_forEvery1}. 
\end{proof}

We know that all states in a minimal invariant set $\setO$ belong to the set $\X_{j_1, j_2, j'_2, j'_1}$ for some $(j_1, j_2, j'_2, j'_1) \in \setJ$.
Yet this does not make $\X_{j_1, j_2, j'_2, j'_1}$ invariant. 
The following result states the necessary and sufficient condition for the set $\X_{j_1, j_2, j'_2, j'_1}$ to be invariant. 
Define 
\begin{equation*}
    \tau^{\max}_{j_2, j'_1} \triangleq \max\{\tau_{j_2}^a, \tau_{j'_1}^c\} 
    \quad\text{and}\quad
    \tau^{\min}_{j_1, j'_2} \triangleq \min \{\tau_{j_1}^a, \tau_{j'_2}^c\}.
\end{equation*}
\begin{proposition}\label{inv1}
    For $(j_1, j_2, j'_2, j'_1) \in \setJ$, $\mathcal{X}_{j_1, j_2, j'_2, j'_1}$ is a invariant set if and only if the following two hold:
    \begin{align}
        \tau^{\max}_{j_2, j'_1} < &\sum_{i=1}^{j_1} n_i^a +\sum_{i=1}^{j'_1} n_i^c, \label{inv1_eq2}\\
        &m+\sum_{i=1}^{j_2-1} n_i^a +\sum_{i=1}^{j'_2-1} n_i^c < \tau^{\min}_{j_1, j'_2}. \label{inv1_eq3}
    \end{align}
\end{proposition}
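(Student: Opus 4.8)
The plan is to reduce invariance of $\mathcal{X}_{j_1, j_2, j'_2, j'_1}$ to a condition on the \emph{fixed} best-responders alone. Membership in $\mathcal{X}_{j_1, j_2, j'_2, j'_1}$ pins down only the strategies of the nonconformists of types $1,\ldots,j_1$ (cooperating) and $j_2,\ldots,b$ (defecting) and the conformists of types $1,\ldots,j'_1$ (cooperating) and $j'_2,\ldots,b'$ (defecting); the imitators and the wandering best-responders are left free. Hence activating any agent of the latter kind automatically keeps the state in $\mathcal{X}_{j_1, j_2, j'_2, j'_1}$, and the set fails to be invariant exactly when, for some $\x\in\mathcal{X}_{j_1, j_2, j'_2, j'_1}$, activating one of the fixed best-responders makes her switch. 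I would first record that for every $\x\in\mathcal{X}_{j_1, j_2, j'_2, j'_1}$ one has $\underline n\le n^C(\x)\le\overline n$, where $\underline n = \sum_{i=1}^{j_1} n_i^a +\sum_{i=1}^{j'_1} n_i^c$ (all imitators and wandering agents defecting) and $\overline n = m+\sum_{i=1}^{j_2-1} n_i^a +\sum_{i=1}^{j'_2-1} n_i^c$ (all of them cooperating), and that both bounds are attained.

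For sufficiency, assume \eqref{inv1_eq2} and \eqref{inv1_eq3}, fix any $\x\in\mathcal{X}_{j_1, j_2, j'_2, j'_1}$, and check the four families of fixed best-responders against the best-response rules \eqref{abr_update_rule} and \eqref{cbr_update_rule}. A cooperating type-$i$ nonconformist with $i\le j_1$ stays cooperating because $n^C(\x)\le\overline n<\tau^{\min}_{j_1, j'_2}\le\tau_{j_1}^a\le\tau_i^a$ by \eqref{inv1_eq3} and \eqref{TemperOrder}; a defecting type-$i$ nonconformist with $i\ge j_2$ stays defecting because $n^C(\x)\ge\underline n>\tau^{\max}_{j_2, j'_1}\ge\tau_{j_2}^a\ge\tau_i^a$; and the cooperating ($i\le j'_1$) and defecting ($i\ge j'_2$) conformists are handled identically via $\tau^{\max}_{j_2, j'_1}\ge\tau_{j'_1}^c$ and $\tau^{\min}_{j_1, j'_2}\le\tau_{j'_2}^c$. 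So every single-agent update keeps the state in $\mathcal{X}_{j_1, j_2, j'_2, j'_1}$, i.e., the set is invariant.

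For necessity I would argue by contraposition. If \eqref{inv1_eq2} fails, then $\tau^{\max}_{j_2, j'_1}\ge\underline n$, and since the tempers are non-integer while $\underline n\in\Z$, in fact $\tau^{\max}_{j_2, j'_1}>\underline n$. Take the state $\x\in\mathcal{X}_{j_1, j_2, j'_2, j'_1}$ with all imitators and all wandering agents defecting, so $n^C(\x)=\underline n$. If the maximum is realized by $\tau_{j_2}^a$ then $j_2\le b$ (otherwise $\tau_{b+1}^a<0\le\underline n$), so a defecting type-$j_2$ nonconformist exists, and activating her first makes her cooperate, leaving the set; if the maximum is realized by $\tau_{j'_1}^c$ then $j'_1\ge1$ (otherwise $\tau_0^c<0\le\underline n$), so a cooperating type-$j'_1$ conformist exists, and activating her first makes her defect, leaving the set — either way contradicting invariance. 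The failure of \eqref{inv1_eq3} is symmetric: use the state with all imitators and wandering agents cooperating, $n^C(\x)=\overline n$, together with a fixed cooperating type-$j_1$ nonconformist or a fixed defecting type-$j'_2$ conformist.

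The bookkeeping over the four families is routine; the delicate point — the main obstacle — is the treatment of the degenerate indices $j_1=0$, $j_2=b+1$, $j'_1=0$, $j'_2=b'+1$, where the relevant fixed family is empty. These are absorbed by the conventions $\tau_0^a=\tau_{b'+1}^c>n$ and $\tau_{b+1}^a=\tau_0^c<0$ fixed just before \cref{equil_form}: in the sufficiency step they make the corresponding bound vacuous, and in the necessity step they precisely rule out the subcases where no offending agent would exist (for instance $\tau_{b+1}^a<0\le\underline n$ forces the temper violating \eqref{inv1_eq2} to be a genuine $\tau_{j'_1}^c$ with $j'_1\ge1$). The non-integer-temper assumption is what upgrades every weak inequality above to a strict one and must be cited explicitly.
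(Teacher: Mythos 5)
Your proof is correct and follows essentially the same route as the paper's: the same witness states for necessity (all non-fixed agents defecting to attack \eqref{inv1_eq2}, all cooperating to attack \eqref{inv1_eq3}, with the degenerate indices excluded via the conventions $\tau_0^a=\tau_{b'+1}^c>n$ and $\tau_{b+1}^a=\tau_0^c<0$), and the same reduction of sufficiency to checking that the fixed best-responders never switch. Your phrasing of necessity via the maximum/minimum temper is just a repackaging of the paper's two separate inequalities, and your sufficiency bound correctly uses the upper bound $m+\sum_{i=1}^{j_2-1} n_i^a+\sum_{i=1}^{j'_2-1} n_i^c$ where the paper's displayed chain is slightly garbled.
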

\begin{proof}
(Necessity) Suppose $\mathcal{X}_{j_1, j_2, j'_2, j'_1}$ is an invariant set. 
We first prove by contradiction that 
\begin{equation}\label{inv1:eq1}
    \sum_{i=1}^{j_1} n_i^a +\sum_{i=1}^{j'_1} n_i^c > \tau_{j_2}^a. 
\end{equation}
If on the contrary $\sum_{i=1}^{j_1} n_i^a +\sum_{i=1}^{j'_1} n_i^c \leq \tau_{j_2}^a$, then because of the non-integer tempers assumption, $\sum_{i=1}^{j_1} n_i^a +\sum_{i=1}^{j'_1} n_i^c < \tau_{j_2}^a$.
Hence, $j_2 \leq b$ since $\tau_{b+1}^a < 0$.
Consider 
$\x(0) = (0, n_1^a, \ldots, n_{j_1}^a, 0, \ldots, 0, 0, \ldots, 0, n_{j'_1}^c, \ldots,n^c_1)\in\mathcal{X}_{j_1, j_2, j'_2, j'_1}$.
If a type-$j_2$ nonconformist is initially active, she will switch from defection to cooperation because $n^C(0) =\sum_{i=1}^{j_1} n_i^a +\sum_{i=1}^{j'_1} n_i^c < \tau_{j_2}^a$. 
So $x_{j_2}^a(1)=1$, resulting in $\x(1) \notin \mathcal{X}_{j_1, j_2, j'_2, j'_1}$.
This contradicts that $\mathcal{X}_{j_1, j_2, j'_2,j'_1}$ is invariant. 
Similarly, it can be shown that $\sum_{i=1}^{j_1} n_i^a +\sum_{i=1}^{j'_1} n_i^c > \tau_{j'_1}^c$, which together with \eqref{inv1:eq1} yields \eqref{inv1_eq2}.
In the same way, by considering $\x(0) = (m, n_1^a, \ldots, n_{j_2-1}^a, 0, \ldots, 0, 0, \ldots, 0, n_{j'_2-1}^c, \ldots,n^c_1)$, we 
can prove \eqref{inv1_eq3}.

(Sufficiency)
Suppose \eqref{inv1_eq2} and \eqref{inv1_eq3} are in force.
It suffices to show that if $\x(0) \in \X_{j_1, j_2, j'_2, j'_1}$, then $\x(1) \in \X_{j_1, j_2, j'_2, j'_1}$ under any activation sequence.
Because $n^C(0) \leq \sum_{i=1}^{j_1} n_i^a+\sum_{i=1}^{j'_1} n_i^c < \tau^{\min}_{j_1, j'_2} \leq \tau_{j_1}^a$, nonconformists of types $1, \ldots, j_1$ do not switch strategies upon activation at time $0$ as they are already cooperating.
One can show the same for the remaining of the fixed agents, completing the proof.
\end{proof}

In essence, the necessary and sufficient invariance condition for the set $\X_{j_1, j_2, j'_2, j'_1}$ is that fixed best-responders do not switch strategies.
Note that this set is not tight enough as $\X_{\xi_1,\xi_2,\xi'_2,\xi'_2}$ may not be invariant, although $\setO\subset\X_{\xi_1,\xi_2,\xi'_2,\xi'_2}$.
\begin{corollary}
    For every $(j_1, j_2, j'_2, j'_1) \in \setJ$ that satisfies \eqref{inv1_eq2} and \eqref{inv1_eq3}, there exists a minimal invariant set $\setO\subseteq\X_{j_1, j_2, j'_2, j'_1}$.
\end{corollary}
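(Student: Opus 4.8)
The plan is to obtain the corollary as a short consequence of \cref{inv1} together with the finiteness of the state space $\X$ and the standard existence result for minimal invariant sets recalled before \cref{revisit_inv}.

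First I would note that $\X_{j_1, j_2, j'_2, j'_1}$ is non-empty. Since $(j_1, j_2, j'_2, j'_1) \in \setJ$, we have $j_1 < j_2$ and $j'_1 < j'_2$, so the defining constraints ($x_i^a = n_i^a$ for $i\le j_1$, $x_i^a = 0$ for $i\ge j_2$, $x_i^c = n_i^c$ for $i\le j'_1$, $x_i^c = 0$ for $i\ge j'_2$) never pin down the same coordinate twice and leave $x^I$ together with the coordinates of the wandering types free; hence they are jointly satisfiable, e.g.\ by setting every unconstrained coordinate to $0$. Because $(j_1, j_2, j'_2, j'_1)$ satisfies \eqref{inv1_eq2} and \eqref{inv1_eq3}, \cref{inv1} then yields that $\X_{j_1, j_2, j'_2, j'_1}$ is a (positively) invariant set.

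It remains to extract a minimal invariant subset. Since $\X$ is finite, $\X_{j_1, j_2, j'_2, j'_1}$ is a finite non-empty invariant set, so the family of its non-empty invariant subsets --- which contains $\X_{j_1, j_2, j'_2, j'_1}$ itself --- has a $\subseteq$-minimal element $\setO$. Every non-empty invariant subset of $\setO$ is a non-empty invariant subset of $\X_{j_1, j_2, j'_2, j'_1}$, so by minimality it equals $\setO$; thus $\setO$ is a minimal invariant set and $\setO\subseteq\X_{j_1, j_2, j'_2, j'_1}$, as claimed. Alternatively, one may start a trajectory at any $\x(0)\in\X_{j_1, j_2, j'_2, j'_1}$: by invariance it stays in $\X_{j_1, j_2, j'_2, j'_1}$, by the finite-time reachability of a minimal invariant set it enters some minimal invariant set $\setO$ at a finite time $t^\ast$, and by \cref{revisit_inv} every state of $\setO$ lies on the trajectory for times $t\ge t^\ast$, so $\setO\subseteq\X_{j_1, j_2, j'_2, j'_1}$.

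There is no genuine obstacle here: the entire content is carried by \cref{inv1}, and the only points requiring a line of care are the non-emptiness of $\X_{j_1, j_2, j'_2, j'_1}$ and, in the second argument, the correct invocation of finite-time convergence to a minimal invariant set.
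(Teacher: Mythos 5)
Your argument is correct and is essentially the route the paper intends: the corollary is stated as an immediate consequence of \cref{inv1}, with the existence of a minimal invariant subset of any non-empty invariant set following from finiteness of the state space, exactly as in your first (set-theoretic) argument. Your added checks---non-emptiness of $\X_{j_1, j_2, j'_2, j'_1}$ from $j_1<j_2$, $j'_1<j'_2$, and the observation that minimality within the family of invariant subsets of $\X_{j_1, j_2, j'_2, j'_1}$ gives global minimality---are the right details to fill in, so nothing further is needed.
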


\subsection{Minimum and maximum number of cooperators}
For a subset of $\X_{j_1, j_2, j'_2, j'_1}$ to be invariant, it is necessary that every state $\x$ in the subset to satisfy $n^C(\x) \in (\tau^{\max}_{j_2,j'_1}, \tau^{\min}_{j_1, j'_2})$.
Thus, we confine $\X_{j_1, j_2, j'_2, j'_1}$ to the subset 
\begin{align*}
    \setS_{j_1, j_2, j'_2, j'_1} \triangleq
    \Big\{\x\in \mathcal{X}_{j_1, j_2, j'_2, j'_1}\,|\,
    \tau^{\max}_{j_2, j'_1} < n^C(\x) < \tau^{\min}_{j_1, j'_2} \Big\}.
\end{align*}
It clearly holds that 
    $\setO \subseteq \setS_{\xi_1, \xi_2, \xi'_2, \xi'_1}$, resulting in $\tau^{\max}_{\xi_2, \xi'_1} < n^C(\x) < \tau^{\min}_{\xi_1, \xi'_2}$.
However, the types $\xi_1, \xi_2, \xi'_2,\xi'_1$, and in turn the thresholds $\tau^{\max}_{\xi_2, \xi'_1}$ and $\tau^{\min}_{\xi_1, \xi'_2}$, are defined based on the extremum number of cooperators $S$ and $L$.
So the inequality $\tau^{\max}_{\xi_2, \xi'_1} < n^C(\x) < \tau^{\min}_{\xi_1, \xi'_2}$ does not inform $\setO$ further than the trivial inequality $S < n^C(\x) < L$.
Instead, we find the necessary and sufficient condition for $\setS_{j_1, j_2, j'_2, j'_1}$ to be invariant.
Because for any $\x \in \X_{j_1, j_2, j'_2, j'_1}$, $\sum_{i=1}^{j_1} n_i^a + \sum_{i=1}^{j'_1} n_i^c \leq n^C(\x) \leq m+ \sum_{i=1}^{j_2-1} n_i^a + \sum_{i=1}^{j'_2-1} n_i^c$,
the following must hold for $\setS_{j_1, j_2, j'_2, j'_1}$ to be nonempty:
\begin{equation} \label{nonemptyS_cond1}
    \tau^{\max}_{j_2, j'_1} < m+ \sum_{i=1}^{j_2-1} n_i^a + \sum_{i=1}^{j'_2-1} n_i^c,
\end{equation}
\begin{equation}\label{nonemptyS_cond2}
    \sum_{i=1}^{j_1} n_i^a + \sum_{i=1}^{j'_1} n_i^c < \tau^{\min}_{j_1, j'_2}.
\end{equation}

\begin{lemma} \label{at_min2}
Given $(j_1, j_2, j'_2, j'_1) \in \setJ$ that satisfies \begin{equation} \label{lem_at_min2_eq1}
    \sum_{i=1}^{j_1} n_i^a + \sum_{i=1}^{j'_1} n_i^c  < \ceil{\tau^{\max}_{j_2, j'_1}}, 
\end{equation}
the set 
$\setS_{j_1, j_2, j'_2, j'_1}$ is invariant, only if the followings hold:
\begin{enumerate}
    \item at every $\x\in \setS_{j_1, j_2, j'_2, j'_1}$ with $n^C(\x) = \ceil{\tau^{\max}_{j_2, j'_1}}$, a cooperator earns the highest payoff;
    \item  $\tau_{j'_2-1}^c < \ceil{\tau^{\max}_{j_2, j'_1}} < \tau_{j_2-1}^a$. 
\end{enumerate}
\end{lemma}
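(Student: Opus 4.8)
The plan is to prove the contrapositive of each implication: assuming $\setS_{j_1, j_2, j'_2, j'_1}$ is invariant (hence nonempty) together with \eqref{lem_at_min2_eq1}, I derive a contradiction whenever \emph{(1)} or \emph{(2)} fails. The common tool is a family of ``bottom states''. Because $\setS_{j_1, j_2, j'_2, j'_1}$ is nonempty and all tempers are non-integers, $\ceil{\tau^{\max}_{j_2, j'_1}}$ is exactly the minimum value of $n^C$ over $\setS_{j_1, j_2, j'_2, j'_1}$, and it satisfies $\tau^{\max}_{j_2, j'_1} < \ceil{\tau^{\max}_{j_2, j'_1}} < \tau^{\min}_{j_1, j'_2}$. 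Writing $N$ for $\sum_{i=1}^{j_1} n_i^a + \sum_{i=1}^{j'_1} n_i^c$ (the least $n^C$ attainable over $\X_{j_1, j_2, j'_2, j'_1}$) and $K$ for $m + \sum_{i=j_1+1}^{j_2-1} n_i^a + \sum_{i=j'_1+1}^{j'_2-1} n_i^c$ (the number of imitators plus wandering best-responders), \eqref{lem_at_min2_eq1} gives $k \triangleq \ceil{\tau^{\max}_{j_2, j'_1}} - N \geq 1$, while nonemptiness of $\setS_{j_1, j_2, j'_2, j'_1}$ gives $k \leq K$. Hence, for any single imitator or wandering best-responder and any prescribed strategy for her, there is a state $\x \in \setS_{j_1, j_2, j'_2, j'_1}$ with $n^C(\x) = \ceil{\tau^{\max}_{j_2, j'_1}}$ realizing it, obtained by distributing the $k$ free cooperators among the $K$ free slots accordingly. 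The engine of all the contradictions is that $\ceil{\tau^{\max}_{j_2, j'_1}} - 1 < \tau^{\max}_{j_2, j'_1}$, so any activation that decreases $n^C$ by one starting from a bottom state pushes the state out of $\setS_{j_1, j_2, j'_2, j'_1}$ (the other agents' strategies still place it in $\X_{j_1, j_2, j'_2, j'_1}$, so only the $n^C$-bound is violated).

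I would dispatch \emph{(2)} first. For $\ceil{\tau^{\max}_{j_2, j'_1}} < \tau_{j_2-1}^a$: if $j_2-1 = j_1$ it is immediate, since $\ceil{\tau^{\max}_{j_2, j'_1}} < \tau^{\min}_{j_1, j'_2} \leq \tau_{j_1}^a$. If $j_2-1 > j_1$, suppose it fails; non-integrality gives $\ceil{\tau^{\max}_{j_2, j'_1}} > \tau_{j_2-1}^a$, and I take a bottom state with a cooperating type-$(j_2-1)$ nonconformist (available since $k \geq 1$ and $n_{j_2-1}^a \geq 1$); her activation makes her defect by \eqref{cbr_update_rule}, dropping $n^C$ below $\tau^{\max}_{j_2, j'_1}$, a contradiction. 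The left inequality $\tau_{j'_2-1}^c < \ceil{\tau^{\max}_{j_2, j'_1}}$ is symmetric: it is automatic when $j'_2-1 = j'_1$ (then $\tau_{j'_2-1}^c = \tau_{j'_1}^c \leq \tau^{\max}_{j_2, j'_1} < \ceil{\tau^{\max}_{j_2, j'_1}}$), and otherwise a bottom state with a cooperating type-$(j'_2-1)$ conformist forces her, via \eqref{abr_update_rule} (since $n^C < \tau_{j'_2-1}^c$), to defect and again lower $n^C$ out of range.

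For \emph{(1)}, suppose some bottom state $\bar{\x} \in \setS_{j_1, j_2, j'_2, j'_1}$ has $\D(\bar{\x}) > \C(\bar{\x})$, i.e., a defector is the strict highest earner. If $\bar{\x}$ has a cooperating imitator, her activation switches her to defection by \eqref{general_im_update_rule}, dropping $n^C$ out of $\setS_{j_1, j_2, j'_2, j'_1}$, a contradiction. If no imitator cooperates at $\bar{\x}$, I pass to another bottom state by an exchange: set one cooperating wandering best-responder to defect and one imitator to cooperate, which preserves $n^C = \ceil{\tau^{\max}_{j_2, j'_1}}$ and keeps the state in $\setS_{j_1, j_2, j'_2, j'_1}$; choosing the newly cooperating imitator to have a type with the smallest cooperator payoff at $n^C = \ceil{\tau^{\max}_{j_2, j'_1}}$ (so $\C$ is not pushed above the value $C_{j_1, j'_1}(\ceil{\tau^{\max}_{j_2, j'_1}})$ of the fixed cooperators already present at $\bar{\x}$) while keeping a highest-earning defector defecting, one checks a defector is still the strict highest earner at the new state, so its cooperating imitator switches down — contradiction.

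The bottom-state construction and the ``drop $n^C$, leave $\setS_{j_1, j_2, j'_2, j'_1}$'' step are routine and mirror the proofs of \cref{inv0}, \cref{coor_atmin} and \cref{inv1}. The genuine obstacle is the exchange argument in \emph{(1)}: one must verify that reallocating a cooperator onto an imitator slot cannot simultaneously raise $\C$ above and depress $\D$ below the top payoff at $n^C = \ceil{\tau^{\max}_{j_2, j'_1}}$. I expect to settle this by a short case analysis on where the highest defector payoff is realized (a fixed-defecting type versus a wandering or imitating one) or, failing that, by descending to a minimal invariant subset $\setO \subseteq \setS_{j_1, j_2, j'_2, j'_1}$ and invoking \cref{revisit_inv} together with \cref{coor_atmin}. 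The non-integer-temper hypothesis (to upgrade $\geq$ to $>$) and the nonemptiness of $\setS_{j_1, j_2, j'_2, j'_1}$ (for $k \leq K$) are used throughout.
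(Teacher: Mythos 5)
Your route is the paper's route: identify $\ceil{\tau^{\max}_{j_2, j'_1}}$ as the smallest attainable $n^C$ in $\setS_{j_1, j_2, j'_2, j'_1}$ under \eqref{lem_at_min2_eq1}, build ``bottom states'' realizing it with a prescribed agent cooperating, and exit the set by an activation that drops $n^C$ by one. Your treatment of Part 2 (activating a cooperating type-$(j_2-1)$ nonconformist, resp.\ type-$(j'_2-1)$ conformist, including the easy boundary cases $j_2-1=j_1$, $j'_2-1=j'_1$) and of the first case of Part 1 (a cooperating imitator switches down when all top earners defect) is complete and coincides with the paper's argument.

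The gap is exactly the step you defer: after swapping a cooperating wandering best-responder for a cooperating imitator to form $\y$, you must show a defector is still the strict top earner at $\y$, and your sketched fix does not do this. You cannot ``choose the newly cooperating imitator to have a type with the smallest cooperator payoff'': the imitators' types are fixed by the population, and they may all belong to a wandering type $k$ none of whose best-responders cooperates at $\bar{\x}$; for such a type, $C_k(\ceil{\tau^{\max}_{j_2, j'_1}})$ is not realized at $\bar{\x}$ and need not be bounded by $C_{j_1, j'_1}(\ceil{\tau^{\max}_{j_2, j'_1}})$, nor by $\D(\bar{\x})$, so the swap can promote a cooperator to top earner and the switch-down argument collapses. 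The paper closes this step by exploiting the standing assumption that every imitator's payoff matrix coincides with that of some best-responder: it argues that the defecting \emph{best-responders} already attain the highest utility at $\x$, and since utilities depend only on $n^C$ (unchanged by the swap) and those best-responders still defect at $\y$, defectors still earn the highest utility at $\y$, so the lone cooperating imitator switches and $n^C$ drops out of the set. Any complete write-up must supply this type-matching argument (your ``short case analysis'' is the right instinct: $k$ with a cooperating member gives $C_k\le\C(\bar{\x})<\D(\bar{\x})$; $k$ fixed-defecting gives, via the temper inequalities, $C_k<D_k\le\D(\bar{\x})$; the wandering type with no cooperating member is the case that needs the extra work). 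Your fallback of descending to a minimal invariant subset $\setO\subseteq\setS_{j_1, j_2, j'_2, j'_1}$ and invoking \cref{revisit_inv} and \cref{coor_atmin} cannot establish Part 1 as stated, because condition 1 quantifies over \emph{every} state of $\setS_{j_1, j_2, j'_2, j'_1}$ at the bottom level, and the offending state need not lie in $\setO$.
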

\begin{proof}
    First we prove Part 1. by contradiction.
    Suppose on the contrary, there exists $\x\in \setS_{j_1, j_2, j'_2, j'_1}$ such that $n^C(\x) = \ceil{\tau^{\max}_{j_2, j'_1}}$ and all highest earners are defectors.
    Because of \eqref{lem_at_min2_eq1}, $n^C(\x) > \sum_{i=1}^{j_1} n_i^a + \sum_{i=1}^{j'_1} n_i^c$, implying the existence of a cooperator who is either an imitator, an nonconformist of type $i\in\{j_1+1,\ldots, b\}$ or a conformist of type $i\in\{j'_1+1,\ldots, b'\}$.
    Should the cooperator be an imitator, she would switch to defection if she becomes active, resulting in a state with the total number of cooperators less than $\ceil{\tau^{\max}_{j_2, j'_1}}$, contradicting the invariance of $\setS_{j_1, j_2, j'_2, j'_1}$.
    Should one of the other two cases hold, i.e., $x^a_i>0$ or $x^c_i>0$, then since $m>0$, the state $\y = \x +\bm{1}_1 - \bm{1}_{1+i}$ or $\y = \x +\bm{1}_1 - \bm{1}_{1+b+i}$ would also be in $\setS_{j_1, j_2, j'_2, j'_1}$, where $\bm{1}_j$ is the vector of all zeros except for the $j^{th}$ entry that is one. 
    In other words, we can modify $\x$ to obtain the state $\y\in \X_{j_1, j_2, j'_2, j'_1}$ with the same number of cooperators, resulting in $\y\in\setS_{j_1, j_2, j'_2, j'_1}$, but where instead of the best-responding cooperator, there is an imitating cooperator.
    Since the type of the imitator is the same as one of the best-responders, and the defectors among the best-responders were already earning the highest utility at $\x$, it can be concluded that defectors also earn the highest utility at $\y$.
    Hence, again the imitator would switch to defection if she becomes active, resulting in a contradiction.
    
    Next, we show by contradiction that $\ceil{\tau^{\max}_{j_2, j'_1}} < \tau_{j_2-1}^a$.
    Suppose on the contrary that $\ceil{\tau^{\max}_{j_2, j'_1}} \geq \tau_{j_2-1}^a$.
    Then, because of the non-integer temper assumption, $\ceil{\tau^{\max}_{j_2, j'_1}} > \tau_{j_2-1}^a$.
    According to $\sum_{i=1}^{j_1} n_i^a + \sum_{i=1}^{j'_1} n_i^c  < \ceil{\tau^{\max}_{j_2, j'_1}}$ and \eqref{nonemptyS_cond1}, there exists $\x\in \setS_{j_1, j_2, j'_2, j'_1}$ such that $n^C(\x) = \ceil{\tau^{\max}_{j_2, j'_1}}$ and an nonconformist of type $j_2-1$ is cooperating at $\x$.
    Consider the initial state $\x(0) = \x$.
    Under an activation sequence where the first active agent is a cooperating nonconformist of type $j_2-1$, she will switch to defection, which also results in a contradiction.
    Similarly, we can show that $\ceil{\tau^{\max}_{j_2, j'_1}} > \tau_{j'_2-1}^c$.
    The proof is then complete.
\end{proof}

\begin{lemma} \label{at_max2}
Given $(j_1, j_2, j'_2, j'_1) \in \setJ$ that satisfies 
\begin{equation*}
    m+ \sum_{i=1}^{j_1} n_i^a + \sum_{i=1}^{j'_1} n_i^c  > \floor{\tau^{\min}_{j_1, j'_2}},
\end{equation*} 
the set $\setS_{j_1, j_2, j'_2, j'_1}$ is invariant, only if the followings hold:
\begin{enumerate}
    \item at every $\x\in \setS_{j_1, j_2, j'_2, j'_1}$ with $n^C(\x) = \floor{\tau^{\min}_{j_1, j'_2}}$, a defector earns the highest payoff;
    \item  $\tau_{j_1+1}^a < \floor{\tau^{\min}_{j_1, j'_2}} < \tau_{j'_1+1}^c$.
\end{enumerate}
\end{lemma}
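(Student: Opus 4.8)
Write $\setS$, $\tau^{\max}$, $\tau^{\min}$ for $\setS_{j_1, j_2, j'_2, j'_1}$, $\tau^{\max}_{j_2,j'_1}$, $\tau^{\min}_{j_1,j'_2}$, respectively. The plan is to argue by contradiction, running at the \emph{upper} edge $\floor{\tau^{\min}}$ of the window $(\tau^{\max},\tau^{\min})$ the mirror image of the argument in the proof of \cref{at_min2}, which works at the lower edge $\ceil{\tau^{\max}}$. Since $\tau^{\min}$ is the minimum of two non-integer tempers it is non-integer, so $\floor{\tau^{\min}}+1>\tau^{\min}$; hence, from a state with $n^C=\floor{\tau^{\min}}$, any single further cooperation drives $n^C$ out of $(\tau^{\max},\tau^{\min})$ and therefore out of $\setS$. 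Assume $\setS$ is invariant, hence non-empty. Non-emptiness forces an integer to lie in $(\tau^{\max},\tau^{\min})$, so $\floor{\tau^{\min}}$ lies in it; together with \eqref{nonemptyS_cond2} and the standing hypothesis $m+\sum_{i=1}^{j_1} n_i^a+\sum_{i=1}^{j'_1} n_i^c>\floor{\tau^{\min}}$, this shows $\floor{\tau^{\min}}$ is an attainable value of $n^C$ inside $\setS$. Finally, at every $\x\in\setS$ with $n^C(\x)=\floor{\tau^{\min}}$ some wandering agent---an imitator, a nonconformist of a type in $\{j_1+1,\dots,j_2-1\}$, or a conformist of a type in $\{j'_1+1,\dots,j'_2-1\}$---must defect, for otherwise $n^C(\x)$ would equal the maximum $m+\sum_{i=1}^{j_2-1} n_i^a+\sum_{i=1}^{j'_2-1} n_i^c\ge m+\sum_{i=1}^{j_1} n_i^a+\sum_{i=1}^{j'_1} n_i^c>\floor{\tau^{\min}}$, a contradiction.

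For Part~1, suppose toward a contradiction that at some $\x\in\setS$ with $n^C(\x)=\floor{\tau^{\min}}$ every highest earner cooperates. If a wandering defector at $\x$ is an imitator, activating her sends her to cooperation, so $n^C$ reaches $\floor{\tau^{\min}}+1>\tau^{\min}$ and the trajectory leaves $\setS$, contradicting invariance. Otherwise $x^I=m$ and every wandering defector is a best-responder; then, exactly as in \cref{at_min2}, I would relabel one defecting wandering best-responder of type $i$ as a defecting imitator of the same payoff type by passing to $\y=\x-\bm{1}_1+\bm{1}_{1+i}$ (or $\y=\x-\bm{1}_1+\bm{1}_{1+b+i}$ for a conformist). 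Since $m>0$, $\y$ is a legitimate state; it still lies in $\setS$ because $n^C$ and all benchmark coordinates are unchanged; and since the two relabelled agents share a payoff matrix, the highest cooperator and highest defector payoffs are unchanged, so every highest earner of $\y$ still cooperates while $\y$ now has a defecting imitator. Activating her gives the same contradiction. Hence a defector earns the highest payoff at every such $\x$.

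For Part~2, suppose $\floor{\tau^{\min}}$ is not strictly between $\tau_{j_1+1}^a$ and $\tau_{j'_1+1}^c$. First consider $\tau_{j_1+1}^a\ge\floor{\tau^{\min}}$, which by non-integrality means $\tau_{j_1+1}^a>\floor{\tau^{\min}}$; this is vacuous when $j_2=j_1+1$, since then $\tau_{j_1+1}^a=\tau_{j_2}^a\le\tau^{\max}<\floor{\tau^{\min}}$, so assume $j_1+1<j_2$. Using the standing hypothesis and \eqref{nonemptyS_cond2}, one exhibits $\x\in\setS$ with $n^C(\x)=\floor{\tau^{\min}}$ at which a type-$(j_1+1)$ nonconformist defects; activating her makes her cooperate because $n^C(\x)<\tau_{j_1+1}^a$, so $n^C$ leaves the window---a contradiction. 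The inequality $\floor{\tau^{\min}}<\tau_{j'_1+1}^c$ follows symmetrically (and is vacuous when $j'_2=j'_1+1$): were it false, one would exhibit a state with a defecting type-$(j'_1+1)$ conformist at $n^C=\floor{\tau^{\min}}$ and use that a conformist whose temper lies below the current number of cooperators switches to cooperation. Combining Parts~1 and~2 gives the claim.

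The step I expect to be delicate is the relabelling in Part~1: one must check that $\y$ is a genuine population state, that it stays in $\setS$, and that replacing a best-responder by an imitator sharing its payoff matrix leaves unchanged which strategy's top earner is on top---precisely the device used in \cref{at_min2}. The rest is bookkeeping with the non-integer-temper convention and the size inequalities \eqref{nonemptyS_cond1}--\eqref{nonemptyS_cond2}.
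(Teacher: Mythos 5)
Your proof is correct and takes essentially the same route as the paper, whose own proof of \cref{at_max2} is simply the mirror of \cref{at_min2}: you have written out that mirror argument at the upper edge $\floor{\tau^{\min}_{j_1,j'_2}}$, including the same imitator-relabelling device and the same construction of a state with a defecting wandering best-responder of type $j_1+1$ (resp.\ $j'_1+1$). As a minor remark, your relabelling case is in fact vacuous: since $n^C(\x)=\floor{\tau^{\min}_{j_1,j'_2}}\geq x^I+\sum_{i=1}^{j_1} n_i^a+\sum_{i=1}^{j'_1} n_i^c$, the standing hypothesis $m+\sum_{i=1}^{j_1} n_i^a+\sum_{i=1}^{j'_1} n_i^c>\floor{\tau^{\min}_{j_1,j'_2}}$ forces $x^I<m$, so a defecting imitator always exists and Part~1 needs no relabelling at all.
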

\begin{proof}
    The proof is similar to that of \cref{at_min2}.
\end{proof}

\begin{theorem}\label{inv2}
Given $(j_1, j_2, j'_2, j'_1) \in \setJ$, the set $\setS_{j_1, j_2, j'_2, j'_1}$ is invariant if and only if the following two conditions hold:
\begin{enumerate}
\item either $\sum_{i=1}^{j_1} n_i^a + \sum_{i=1}^{j'_1} n_i^c  \geq \ceil{\tau^{\max}_{j_2, j'_1}}$ or the followings hold:
    \begin{enumerate}
        \item at every $\x\in \setS_{j_1, j_2, j'_2, j'_1}$ with $n^C(\x) = \ceil{\tau^{\max}_{j_2, j'_1}}$, a cooperator earns the highest payoff;
        \item  $\tau_{j'_2-1}^c < \ceil{\tau^{\max}_{j_2, j'_1}} < \tau_{j_2-1}^a$, 
    \end{enumerate}
    \item either $m+ \sum_{i=1}^{j_1} n_i^a + \sum_{i=1}^{j'_1} n_i^c  \leq \floor{\tau^{\min}_{j_1, j'_2}}$ or the followings hold:
    \begin{enumerate}
        \item at every $\x\in \setS_{j_1, j_2, j'_2, j'_1}$ with $n^C(\x) = \floor{\tau^{\min}_{j_1, j'_2}}$, a defector earns the highest payoff;
        \item  $\tau_{j_1+1}^a < \floor{\tau^{\min}_{j_1, j'_2}} < \tau_{j'_1+1}^c$.
    \end{enumerate}
\end{enumerate}
\end{theorem}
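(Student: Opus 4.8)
The plan is to prove the two implications separately: necessity drops out almost immediately from \cref{at_min2} and \cref{at_max2}, while sufficiency is a direct one-step invariance argument built on the template already used for \cref{inv1}.

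\emph{Necessity.} Suppose $\setS_{j_1, j_2, j'_2, j'_1}$ is invariant, hence in particular nonempty. If the first alternative of Condition~1 fails, i.e.\ $\sum_{i=1}^{j_1} n_i^a + \sum_{i=1}^{j'_1} n_i^c < \ceil{\tau^{\max}_{j_2, j'_1}}$, then \eqref{lem_at_min2_eq1} holds, so \cref{at_min2} applies and yields items~(a) and~(b) of Condition~1. Symmetrically, if the first alternative of Condition~2 fails then the hypothesis of \cref{at_max2} holds and that lemma delivers items~(a) and~(b) of Condition~2. Thus invariance forces both conditions.

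\emph{Sufficiency.} Assume Conditions~1 and~2. I would first record $\setS_{j_1, j_2, j'_2, j'_1}\neq\emptyset$ from the hypotheses together with the necessary emptiness bounds \eqref{nonemptyS_cond1}--\eqref{nonemptyS_cond2} (a first alternative exhibits an explicit witness state, namely the state of $\mathcal{X}_{j_1, j_2, j'_2, j'_1}$ in which all wandering agents and all imitators defect; each item~(b) instead places the integer $\ceil{\tau^{\max}_{j_2, j'_1}}$ or $\floor{\tau^{\min}_{j_1, j'_2}}$ strictly inside the temper window, which combined with the range bounds on $n^C$ over $\mathcal{X}_{j_1, j_2, j'_2, j'_1}$ produces a member of $\setS_{j_1, j_2, j'_2, j'_1}$). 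Then I fix $\x(0)\in\setS_{j_1, j_2, j'_2, j'_1}$ and an arbitrary active agent and show $\x(1)\in\setS_{j_1, j_2, j'_2, j'_1}$. That $\x(1)\in\mathcal{X}_{j_1, j_2, j'_2, j'_1}$ is exactly the ``fixed best-responders do not switch'' step from the sufficiency half of \cref{inv1}: since $\tau^{\max}_{j_2, j'_1}<n^C(\x(0))<\tau^{\min}_{j_1, j'_2}$, the count $n^C(\x(0))$ already lies on the correct side of every fixed best-responder's temper. It remains to keep $n^C(\x(1))\in(\tau^{\max}_{j_2, j'_1},\tau^{\min}_{j_1, j'_2})$; because one asynchronous update moves $n^C$ by at most one and the tempers are non-integer (so $\ceil{\tau^{\max}_{j_2, j'_1}}-1<\tau^{\max}_{j_2, j'_1}$ and $\floor{\tau^{\min}_{j_1, j'_2}}+1>\tau^{\min}_{j_1, j'_2}$), only two configurations need care: $n^C(\x(0))=\ceil{\tau^{\max}_{j_2, j'_1}}$, where no cooperation-to-defection switch may occur, and $n^C(\x(0))=\floor{\tau^{\min}_{j_1, j'_2}}$, where no defection-to-cooperation switch may occur.

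I would dispatch the first of these with Condition~1. If its first alternative holds, then $n^C(\x(0))=\ceil{\tau^{\max}_{j_2, j'_1}}$ equals the minimum of $n^C$ over $\mathcal{X}_{j_1, j_2, j'_2, j'_1}$, which forces every wandering agent and every imitator to defect, so only a fixed cooperator could defect, and none does since its temper is separated from $n^C(\x(0))$ by the window inequalities. If instead~(a)--(b) hold, then~(b) puts $\ceil{\tau^{\max}_{j_2, j'_1}}$ strictly between $\tau^c_{j'_2-1}$ and $\tau^a_{j_2-1}$, so by \eqref{cbr_update_rule} and \eqref{abr_update_rule} every wandering best-responder still prefers cooperation at this count, while~(a) keeps a cooperator among the top earners so no cooperating imitator switches; the fixed cooperators are handled as before. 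The configuration $n^C(\x(0))=\floor{\tau^{\min}_{j_1, j'_2}}$ is treated symmetrically with Condition~2, and interior counts cannot leave the window in one step. The genuinely delicate situation is when $(\tau^{\max}_{j_2, j'_1},\tau^{\min}_{j_1, j'_2})$ contains a single integer, so that $\ceil{\tau^{\max}_{j_2, j'_1}}=\floor{\tau^{\min}_{j_1, j'_2}}$ and the lone state of $\setS_{j_1, j_2, j'_2, j'_1}$ at that count must admit neither move; here one uses that the two~(b)-inequalities become mutually incompatible unless there are no wandering best-responder types (they would require $\tau^a_{j_1+1}<\floor{\tau^{\min}_{j_1, j'_2}}<\tau^a_{j_2-1}$ and the conformist analogue, violating \eqref{TemperOrder} unless $j_2=j_1+1$ and $j'_2=j'_1+1$), while the~(a)/first-alternative clauses of the two conditions together pin the imitators at that state.

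The main obstacle is exactly this narrow-window case together with the attendant nonemptiness bookkeeping: one must show that Conditions~1 and~2, which superficially look like two independent one-sided requirements at the two ends of the fluctuation interval, in fact conspire to forbid the simultaneous upward and downward escapes. Everything else — forward invariance at interior counts and the ``fixed best-responders stay put'' reduction — merely re-runs arguments already in place for \cref{inv1}, \cref{at_min2}, and \cref{at_max2}.
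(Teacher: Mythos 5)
Your proposal is correct and follows essentially the same route as the paper: necessity is read off from \cref{at_min2} and \cref{at_max2} (whose hypotheses are exactly the negations of the first alternatives), and sufficiency is a one-step argument that first reuses the \cref{inv1} reasoning to get $\x(1)\in\X_{j_1,j_2,j'_2,j'_1}$ and then shows the cooperator count cannot cross $\ceil{\tau^{\max}_{j_2,j'_1}}$ downward or $\floor{\tau^{\min}_{j_1,j'_2}}$ upward, splitting at each boundary according to which clause of the corresponding condition holds. The only substantive difference is your treatment of the case $\ceil{\tau^{\max}_{j_2,j'_1}}=\floor{\tau^{\min}_{j_1,j'_2}}$ as a ``delicate'' obstacle: no extra argument (and no appeal to incompatibility of the two (b)-inequalities) is actually needed there, since the no-decrease argument from Condition~1 and the no-increase argument from Condition~2 are logically independent and simply both apply at that state, which is how the paper handles it implicitly; likewise the nonemptiness bookkeeping you add is not part of (nor required by) the paper's proof.
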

\begin{proof}
    The necessity has been proved in \cref{at_min2} and \cref{at_max2}.
    For sufficiency, we show that if $\x(0) \in \setS_{j_1, j_2, j'_2, j'_1}$, then $\x(1) \in \setS_{j_1, j_2, j'_2, j'_1}$ under any activation sequence.
    Similar to the proof of \cref{inv1}, we can prove that $\x(1) \in \X_{j_1, j_2, j'_2, j'_1}$.
    Then it suffices to show that $n^C(1) \in (\tau_{j_2, j'_1}^{\max}, \tau_{j_1, j'_2}^{\min})$.
    To do so, we prove that if $n^C(0) = \ceil{\tau_{j_2, j'_1}^{\max}}$, then $n^C(1) \geq n^C(0)$, and similarly it can be shown that if $n^C(0) = \floor{\tau_{j_1, j'_2}^{\min}}$, then $n^C(1) \leq n^C(0)$.
    Because $\x(0) \in \setS_{j_1, j_2, j'_2, j'_1} \subseteq \X_{j_1, j_2, j'_2, j'_1}$, it holds that $\ceil{\tau_{j_2, j'_1}^{\max}} = n^C(0) \geq \sum_{i=1}^{j_1} n_i^a + \sum_{i=1}^{j'_1} n_i^c$.
    Then either of the two cases can happen:
    
    \emph{Case 1.1:} $\ceil{\tau_{j_2, j'_1}^{\max}} = \sum_{i=1}^{j_1} n_i^a + \sum_{i=1}^{j'_1} n_i^c$.
    Because $\x(1) \in \X_{j_1, j_2, j'_2, j'_1}$, we have $n^C(1) \geq  \sum_{i=1}^{j_1} n_i^a + \sum_{i=1}^{j'_1} n_i^c = n^C(0)$.
    
    \emph{Case 1.2:} $\ceil{\tau_{j_2, j'_1}^{\max}} > \sum_{i=1}^{j_1} n_i^a + \sum_{i=1}^{j'_1} n_i^c$.
    Then conditions 1.a and 1.b are in force.
    According to 1.a, a cooperator is earning the highest payoff, implying that no imitator will switch to defection.
    According to 1.b., nonconformists of types $j_1+1, \ldots, j_2-1$ and conformists of types $j'_1+1, \ldots, j'_2-1$ do not switch to defection either.
    The same holds with the other best-responders as $\x(0)$ and $\x(1)$ both belong to $\X_{j_1, j_2, j'_2, j'_1}$.
    So $n^C(1) \geq n^C(0)$ under any activation sequence, completing the proof.
\end{proof}

\subsection{Ordered summation of the wandering nonconformists}

Inspired by \cite{ramaziCharacter2020}, we expect the minimal invariant set to be a subset of the following set: 
\begin{align}
    \mathcal{I}_{j_1, j_2, j'_2, j'_1} = \Bigg\{\x\in \X_{j_1, j_2, j'_2, j'_1}\,\big|\, &\forall i\in\left\{j_1+1,\ldots, j_2-1\right\} \nonumber\\
    &\sum_{k=1}^{j'_1} n_k^c +\sum_{k=1}^{j_1} n_k^a +\sum_{k=i}^{j_2-1} x_k^a \leq \ceil{\tau_i^a} \label{r2l},\\
    & m+\sum_{k=1}^{j'_2-1} n_k^c +\sum_{k=1}^{j_1} n_k^a
    +\!\!\! \sum_{k=j_1+1}^{i}\!\!\! x_k^a+\!\!\sum_{k=i+1}^{j_2-1}\!\! n_k^a \geq \floor{\tau_i^a} \label{l2r}\Bigg\}.
\end{align}
For $\xi_1, \xi_2, \xi'_2, \xi'_1$ defined in \eqref{O:j_1}, \eqref{O:j_2}, \eqref{O:j'_2} and \eqref{O:j'_1}, we show the following result.
\begin{proposition} \label{supsetI}
    $\setO \subseteq \setI_{\xi_1, \xi_2, \xi'_2, \xi'_1}$.
\end{proposition}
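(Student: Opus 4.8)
The plan is to prove the containment by contradiction, in the same style as the proof of \cref{inv0}: assume there is a state $\x\in\setO$ violating one of the two defining inequalities \eqref{r2l} or \eqref{l2r} for some wandering anticoordinating index $i\in\{\xi_1+1,\ldots,\xi_2-1\}$, and then exhibit an activation sequence that drives the trajectory out of $\setO$ (or into a proper invariant subset, contradicting \cref{revisit_inv}). The key intuition to extract is that the inequality \eqref{r2l} is the assertion that, if one tries to bring exactly the type-$i,i+1,\ldots,j_2-1$ nonconformists from defection up to cooperation one type at a time (``right-to-left''), the running total of cooperators stays below $\tau_i^a$ at the step where type $i$ is about to flip --- so a type-$i$ nonconformist, once cooperating in such a configuration, would actually want to switch back; and \eqref{l2r} is the mirror (``left-to-right'') statement that a defecting type-$i$ nonconformist in the analogous maximal configuration would want to switch to cooperation.

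First I would fix notation using \cref{inv0}, so that every $\x\in\setO$ lies in $\X_{\xi_1,\xi_2,\xi'_2,\xi'_1}$; in particular the fixed best-responders are pinned, and only the imitators and the wandering nonconformists of types $\xi_1+1,\ldots,\xi_2-1$ and wandering conformists of types $\xi'_1+1,\ldots,\xi'_2-1$ can move. Suppose \eqref{r2l} fails at some $\x\in\setO$ for some $i$, i.e.\ $\sum_{k=1}^{\xi'_1} n_k^c+\sum_{k=1}^{\xi_1} n_k^a+\sum_{k=i}^{\xi_2-1} x_k^a > \ceil{\tau_i^a}$, hence (non-integer tempers) the left side is $\ge \tau_i^a$ strictly above. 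Starting from $\x(0)=\x$, I would run an activation sequence that first activates, in order, all defecting conformists of wandering types $\xi'_1+1,\ldots,\xi'_2-1$ and all defecting nonconformists of wandering types $i+1,\ldots,\xi_2-1$ in a way that only decreases $n^C$ down toward the relevant ``right-to-left'' configuration, then activates a cooperating type-$i$ nonconformist. The point is that along this sequence the count of cooperators among types $1,\ldots,i$ stays at $\sum_{k\le\xi_1}n_k^a+\sum_{k\le\xi'_1}n_k^c$ plus whatever type-$i$ cooperators remain, and by the violated inequality this keeps $n^C$ strictly above $\tau_i^a$, so each activated type-$i$ (and higher) nonconformist wants to \emph{defect}; once all wandering nonconformists of types $\ge i$ have defected and stay defected, they are fixed forever, so $\setO\setminus\{$states with a cooperating wandering type-$i$ nonconformist$\}$ is invariant and proper, contradicting \cref{lemma_forEvery1} (which guarantees type $i$ genuinely wanders in $\setO$). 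The failure of \eqref{l2r} is handled symmetrically with a sequence that activates imitators and the relevant wandering defectors to push $n^C$ up to the ``left-to-right'' maximal configuration, forcing a defecting type-$i$ nonconformist to cooperate and then stay fixed.

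The main obstacle I expect is the bookkeeping in constructing the activation sequence so that $n^C(t)$ actually realizes the extremal sums appearing in \eqref{r2l} and \eqref{l2r} \emph{while staying inside $\setO$} at every intermediate step --- i.e.\ verifying that each intermediate state is still in $\setO$ (not merely in $\X_{\xi_1,\xi_2,\xi'_2,\xi'_1}$) so that \cref{revisit_inv} and \cref{lemma_forEvery1} can be invoked. Concretely, one must argue that from \emph{some} state of $\setO$ one can reach, via updates that agents genuinely want to perform, a configuration where the wandering nonconformists of types $i,\ldots,\xi_2-1$ are maximally cooperating (for the \eqref{r2l} direction) or the wandering conformists of types $\xi'_1+1,\ldots,\xi'_2-1$ and wandering nonconformists of types $j_1+1,\ldots,i$ are arranged as in \eqref{l2r}; here \cref{coor_atmin} (all wandering conformists defect at $n^C=S$ and cooperate at $n^C=L$) and the definitions \eqref{O:j_1}--\eqref{O:j'_1} of $\xi_1,\xi_2,\xi'_2,\xi'_1$ via $S$ and $L$ do the crucial work, pinning where the wandering conformists sit at the extremes of the fluctuation interval. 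Once that reachability is established the contradiction is immediate, so the proof reduces to this sequencing lemma plus the two parallel case analyses.
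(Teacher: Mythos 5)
Your high-level plan (argue by contradiction, exhibit activation sequences, and contradict minimality via \cref{revisit_inv} or \cref{lemma_forEvery1}) is in the right family, but the mechanism you propose has a genuine gap. Your contradiction hinges on the claim that, starting from the offending state $\x$, you can defect the wandering conformists and the wandering nonconformists of types $i+1,\ldots,\xi_2-1$ one after another while $n^C$ ``stays strictly above $\tau_i^a$'', so that eventually all wandering nonconformists of types $\geq i$ defect and remain defecting forever, making a proper subset of $\setO$ invariant. The violated inequality \eqref{r2l} does not support this: it bounds the aggregate $\sum_{k=1}^{\xi'_1}n^c_k+\sum_{k=1}^{\xi_1}n^a_k+\sum_{k=i}^{\xi_2-1}x^a_k$ at the single state $\x$ only, so once two or more cooperators of types $>i$ (or wandering conformists, or imitators) have switched to defection, nothing prevents $n^C$ from falling below $\tau_i^a$, at which point nonconformists of types $\geq i$ want to cooperate again; the set you want to declare invariant need not be invariant, and the intermediate states of your driving sequence need not stay in any configuration you control. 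A second gap: when \eqref{r2l} fails at index $i$ you cannot in general ``activate a cooperating type-$i$ nonconformist,'' since the violation may be caused entirely by cooperators of types strictly larger than $i$ while $x_i^a=0$.

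The paper closes both gaps with a backward induction on the type index, which your plan omits and which also makes your deferred ``sequencing lemma'' unnecessary. The base case $i=\xi_2-1$ is trivial when $x^a_{\xi_2-1}=0$ (the fixed cooperators are at most $S<\tau^a_{\xi_2-1}$), so a violation forces $x^a_{\xi_2-1}>0$; and the inductive hypothesis at index $i$ is exactly what forces $x^a_{i-1}>0$ when the inequality fails at $i-1$, via \eqref{TemperOrder}. Then, instead of steering the trajectory to an extremal configuration, the paper activates a single cooperating nonconformist of the offending type and runs a one-step trap argument: because the failed inequality is stated with $\ceil{\tau^a_{i-1}}$, whenever $\sum_{k\geq i-1}x^a_k$ equals its value at $\x$ minus one, $n^C$ already exceeds $\tau^a_{i-1}\geq\tau^a_k$ for every $k\geq i-1$, so none of these types ever switches back to cooperation, the partial sum never recovers its original value, and $\x$ is never revisited --- contradicting \cref{revisit_inv}. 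No bookkeeping of intermediate states inside $\setO$, and no use of \cref{coor_atmin}, is needed. To repair your draft, replace the ``drive every type-$\geq i$ wanderer to defection'' step by this partial-sum trap and add the downward induction that guarantees the cooperating agent you need to activate actually exists.
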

\begin{proof}
    We prove by induction that for every $\x \in \setO$, \eqref{r2l} holds for all $i \in \{\xi_1+1, \ldots, \xi_2-1\}$.
    
    \emph{Step 1:} We show \eqref{r2l} for $i = \xi_2-1$.
    Let $\x\in \setO$.
    The result is trivial if $x_{\xi_2-1}^a = 0$ since 
    $\sum_{k=1}^{\xi'_1} n_k^c + \sum_{k=1}^{\xi_1} n_k^a \leq S < \tau_{\xi_2-1}^a < \ceil{\tau_{\xi_2-1}^a}.$
    So consider the case where $x_{\xi_2-1}^a > 0$.
    We prove by contradiction. 
    Suppose on the contrary, 
    \begin{equation}\label{supsetI:eq1}
        \sum_{k=1}^{\xi'_1} n_k^c + \sum_{k=1}^{\xi_1} n_k^a + x_{\xi_2-1}^a > \ceil{\tau_{\xi_2-1}^a}.
    \end{equation}
    Consider the initial state $\x(0) = \x$.
    Under an activation sequence where the first active agent is a cooperating nonconformist of type $\xi_2-1$, she will switch to defection since $n^C(0) \geq \sum_{k=1}^{\xi'_1} n_k^c + \sum_{k=1}^{\xi_1} n_k^a + x_{\xi_2-1}^a(0) \overset{\eqref{supsetI:eq1}}{>} \tau_{\xi_2-1}^a$.
    So $x_{\xi_2-1}^a(1) = x_{\xi_2-1}^a(0)-1$.
    Since $\x(0) \in \setO$, it holds that $\x(1) \in \setO$, which implies that $n^C(1) \geq \sum_{k=1}^{\xi_1} n_i^c+\sum_{k=1}^{\xi'_1} n_i^a + x_{\xi_2-1}^a(1) = \sum_{k=1}^{\xi_1} n_i^c+\sum_{k=1}^{\xi'_1} n_i^a + x_{\xi_2-1}^a(0) - 1 \overset{\eqref{supsetI:eq1}}{>} \tau_{\xi_2-1}^a$.
    Thus, a type-$(\xi_2-1)$ nonconformist will not switch from defection to cooperation at time $2$, resulting in $x_{\xi_2-1}^a(2) \leq x_{\xi_2-1}^a(1)< x_{\xi_2-1}^a(0)$. 
    By induction it can be shown that $x_{\xi_2-1}^a(t) < x_{\xi_2-1}^a(0)\forall t \geq 1$.
    Hence, $\x(t) \in \setO\backslash\{\x\}$ for all $t\geq 1$, contradicting the result of \cref{revisit_inv}.
    
    \emph{Step 2:} Suppose that \eqref{r2l} holds for some $i \in \{\xi_1+2, \ldots, \xi_2-1\}$ for every $\x\in\setO$.
    We show by contradiction that it also holds for $i-1$ for every $\x\in\setO$.
    Suppose the contrary that there exists $\x\in\setO$ such that
    \begin{equation}\label{supsetI:eq2}
        \sum_{k=1}^{\xi'_1} n_k^c + \sum_{k=1}^{\xi_1} n_k^a + \sum_{k=i-1}^{\xi_2-1} x_k^a >\ceil{\tau_{i-1}^a}.
    \end{equation}
    Then $x_{i-1}^a > 0$.
    Otherwise, $x_{i-1}^a = 0$, resulting in $\ceil{\tau_{i-1}^a} \overset{\eqref{supsetI:eq2}}{<} 
    \sum_{k=1}^{\xi'_1} n_k^c + \sum_{k=1}^{\xi_1} n_k^a + \sum_{k=i-1}^{\xi_2-1} x_k^a = \sum_{k=1}^{\xi'_1} n_k^c + \sum_{k=1}^{\xi_1} n_k^a + \sum_{k=i}^{\xi_2-1} x_k^a \overset{\eqref{r2l}}{\leq} \ceil{\tau_{i+1}^a}$, which contradicts \eqref{TemperOrder}.
    Consider the initial state $\x(0)= \x$. 
    Similar to Step 1, it can be shown that under activation sequences where the first active agent is a cooperating nonconformist of type $i-1$, $\sum_{k=i-1}^{\xi_2-1} x_k^a(t) < \sum_{k=i-1}^{\xi_2-1} x_k^a(0)$ for all $t\geq 1$, which also results in a contradiction.
    
    This completes the proof by induction for \eqref{r2l}.
    Similarly, we can show that for every $\x\in\setO$, \eqref{l2r} holds for any $i \in \{\xi_1+1, \ldots, \xi_2-1\}$, completing the proof.
\end{proof}

\section{Stochastic Stability}\label{sec:stochastic_stability}
So far, we have studied the agents' behavior under perfect decision makings, which requires sufficient computational capabilities and access to the necessary information. 
However, in practice, such assumptions are usually near impossible to meet. 
Individuals make mistakes in their decisions or need to experiment sub-optimal strategies in order to better understand their environment. So the real dynamics would be subjected to some \emph{perturbation}. To model this perturbation we suppose rather than choosing the strategy that is the best-response to the population or imitation of the most successful, the active individual may choose the opposite strategy with a small probability $\varepsilon>0$. To be more precise, let $s^{\mathfrak{UP}}_i(t)$ denote agent $i$'s strategy at time $t$ under the \emph{unperturbed} setting; that is, $s^{\mathfrak{UP}}_i(t) = s^{B}_i(t)$ or $s^I_i(t)$ based on whether agent $i$ is a best-responder or an imitator. 
Also, define $\bar{s}^{\mathfrak{UP}}_i(t)$ as the opposite strategy, i.e., $\{\bar{s}^{\mathfrak{UP}}_i(t)\} = \{C,D\} - \{s^{\mathfrak{UP}}_i(t)\}$. Upon activation at time $t$, agent $i$ at time $t+1$ plays the random strategy $s^{\mathfrak{P}}_i(t+1)$ defined by
\begin{equation*}
    s^{\mathfrak{P}}_i(t+1) = 
    \begin{cases}
        s^{\mathfrak{UP}}_i (t+1)& \text{if } Z^t = 0 \\
        \bar{s}^{\mathfrak{UP}}_i(t+1) & \text{if } Z^t = 1
    \end{cases},
\end{equation*}
where $Z^t$ is a Bernoulli random variable with mean $\varepsilon$. For each $t\in\mathbb Z_{\geq 0}$, $Z^t$ is supposed to be independent of $Z^0, \ldots, Z^{t-1}$ and every other happenings up to and including the time step $t$ (the previous and current states of the system, $i^0,\ldots,i^t$, etc.).

We are interested in the long-run behavior of the population dynamics in the presence of this type of perturbation. Under some conditions, not only the unperturbed dynamics but also the perturbed dynamics corresponding to every $\varepsilon>0$  can be represented by a Markov chain. When the perturbed dynamics are aperiodic and irreducible, for each $\varepsilon$, the long-run behavior is determined by the unique corresponding stationary distribution $\bm{\mu}^\varepsilon$. The limit of these stationary distributions, as $\varepsilon$ approaches zero, provides an approximation to $\bm{\mu}^\varepsilon$ for small values of $\varepsilon$. Those states of positive weight in this limiting distribution are considered to be \emph{stochastically stable} (see~\cite{foster1990stochastic,kandori1993learning} for the first emergences of this concept). We provide a rigorous definition of stochastic stability, that is fully explained in terms of Markov chains and their stationary distributions~\cite{young1993evolution,ellison2000basins}.

\begin{definition}{[Regular perturbation]}\label{def:evolution_with_noise}
Suppose that $P$ is a Markov transition matrix with a finite state space $\X$, i.e., for all $\x,\y\in\X$ and all $t\in\mathbb Z_{\geq0}$, the entry $P_{\x\y}$ is the probability of reaching $\y$ at time $t+1$, given that the state at time $t$ is $\x$. A \emph{regular perturbation} of $P$ is a family of Markov transition matrices $P^\varepsilon$ on $\X$ indexed by a parameter $\varepsilon\in(0,\bar{\varepsilon})$ for an appropriate value of $\bar\varepsilon>0$ such that
\begin{itemize}
\item{The Markov chain corresponding to each $P^\varepsilon$ is aperiodic and irreducible;}
\item{$P^\varepsilon$ is continuous in $\varepsilon$ with $\lim_{\varepsilon\to0}P^\varepsilon=P$;}
\item{there exists a \emph{cost function} $c_{\X}:\X\times\X\to[0,+\infty]$ such that for all pairs of states $\x, \y\in\X$, 
\begin{equation*}
\lim_{\varepsilon\to 0} P_{\x\y}^\varepsilon/\varepsilon^{c_{\X}(\x,\y)}
\end{equation*} 
exists and is strictly positive if $c_{\X}(\x,\y)<\infty$ (with $P_{\x\y}^\varepsilon=0$ for sufficiently small $\varepsilon$ if $c_{\X}(\x,\y)=\infty$).}
\end{itemize}
\end{definition}

We remind that $P_{\x\y}^\varepsilon$ is the probability of reaching $\y$ from $\x$ in one step under the perturbed Markov chain $P^\varepsilon$. So for each $\x, \y \in \X$, the value of $c_{\X}(\x,\y)$ shows how ``unlikely'' is the one-step transition from $\x$ to $\y$ under $P^\varepsilon$ for small values of $\varepsilon$, i.e., higher cost values imply more unlikely one-step transitions.  

Since the Markov chain corresponding to each $P^\varepsilon$ is aperiodic and irreducible, there exists a unique \emph{probability vector} $\bm{\mu}^\varepsilon$, i.e., a vector with non-negative components that sum to 1, which is the stationary distribution for $P^\varepsilon$. By definition, we have $\bm{\mu}^\varepsilon P^\varepsilon = \bm{\mu}^\varepsilon$. Indeed, the vector $\bm{\mu}^\varepsilon$ determines the long-run distribution of the chain independently of its initial distribution, i.e., for every initial probability distribution $\bm{\bar{\mu}}$, $\lim_{t\to+\infty}\bm{\bar{\mu}}(P^\varepsilon)^t = \bm{\mu}^\varepsilon$. By the continuity assumption of $P^\varepsilon$, we can define
\begin{align*}
\bm{\mu}^* = \lim_{\varepsilon\to 0}\bm{\mu}^\varepsilon.
\end{align*}
Then $\bm{\mu}^*$ is a probability vector and is used to define the stochastically stable states as follows.

\begin{definition}{[Stochastic stability]}\label{def:stoch_stable}
Consider $P^\varepsilon$ as a regular perturbation of $P$. A state $\x\in\X$ is \emph{stochastically stable} if $\bm{\mu}^*(\x)>0$. A non-empty subset of $\X$ is called \emph{stochastically stable} if its members are all stochastically stable.  The set of all stochastically stable states is called \emph{the maximal stochastically stable set}. 
\end{definition}

Since $\X$ is assumed to be finite, the maximal stochastically stable set is non-empty. Moreover, it is known that this set is a union of the \emph{recurrent classes} of the Markov chain corresponding to $P$ \cite[Theorem 4]{young1993evolution}.  Note that a recurrent class of a Markov chain with finite state space $\X$ is a non-empty subset $\Omega$ of $\X$ such that: 1) starting from $\Omega$, the chain stays there forever with probability $1$; 2) for any $\x,\y\in\Omega$ there exists a path of positive probability from $\x$ to $\y$.  Hence, the family of recurrent classes of $P$ is the same as the family of minimal invariant sets of the dynamics represented by $P$. 
\subsection{Mixed binary-type populations: set-up}\label{subsec: mixed_b_type_pop_set-up}

Inspired by the strong condition necessary for a mixed equilibrium to be stable (\cref{sec_stability}), we would like to investigate the problem of whether mixed or extreme equilibria are ``more likely'' to be stochastically stable. We focus on \emph{mixed binary-type populations}, i.e., populations consisting of two types: a conformist type and an nonconformist type, with some imitators from each type. We name those imitators with the same payoff matrix as that of the nonconformists (resp. conformists), the \emph{anticoordinating imitators} (resp. \emph{coordinating imitators}). In order to fulfill the assumptions of~\cref{def:evolution_with_noise}, we need to modify the model described in~\cref{sec:model}.

Let $m^a$ and $m^c$ represent respectively the total number of anticoordinating and coordinating imitators. Same as before, $n^a$ and $n^c$ are respectively the total number of nonconformists, and conformists. Define
\begin{equation*}
m = m^a+m^c\qquad\text{and} \qquad n = m^a+n^a+m^c+n^c, 
\end{equation*}
and assume $m^a, n^a, m^c, n^c\geq 1$. The tempers of the nonconformists and conformists are respectively non-integer values $\tau^a$, and $\tau^c$. The population state is represented by the frequency of cooperating agents in each of the four subpopulations, i.e, anticoordinating imitators, nonconformists, coordinating imitators, and conformists:
\begin{equation*}
\x = (x_1^I, x^a, x_2^I, x^c).
\end{equation*}
Unlike the setting in~\cref{sec:model}, here we use this different representation that includes anticoordinating and coordinating imitators separately since we need the  population dynamics to be described as a Markov chain on the state space $\X$. More specifically, it is necessary that for any two states $\x$ and $\y$, the probability of transition form $\x$ to $\y$ depends only on these states. Consider, for instance, arbitrary $m^a$, $n^a$, $m^c$, and $n^c$ such that $m^a = m^c$. The probability of transition from $(m^a, n^a, 0, n^c)$ to $(m^a, n^a, 1, n^c)$ might be different from the probability of transition from $(0, n^a, m^c, n^c)$ to $(1, n^a, m^c, n^c)$, for the active utility lines at $(m^a, n^a, 0, n^c)$ and $(0, n^a, m^c, n^c)$ are not the same. However, both states have the same number of cooperating imitators. This is also the case for the destination states $(m^a, n^a,1, n^c)$ and $(1, n^a, m^c, n^c)$. Hence, distinguishing the imitators in the state is required for having a Markov chain.

 On the other hand, the Markov property together with the necessity of irreducibility in the perturbed case imposes some constraints on the activation sequence. The transition probabilities must depend only on the source and destination states, and this dependency may not change over time. Moreover, in the perturbed dynamics, every state must be accessible from any other state through a path of positive probability.  So we impose the following assumption.

\begin{assumption}\label{assump:activation_sequence}
Suppose that for $t\in\mathbb Z_{\geq0}$, the random variable $I^t$ represents the active agent at time $t$. The $i^t$ in~\cref{sec:model} will then be a realization of $I^t$. We assume $I^0, I^1,\ldots$ are independent and identically distributed according to a positive probability vector $p = (p_1,\ldots,p_n)$, i.e., for $i\in\{1,\ldots,n\}$ and $t\in\mathbb Z_{\geq0}$, $\mathbb P[I^t = i]=p_i>0$. Moreover, inside each subpopulation, $p$ is constant; that is, $p_i$ is the same for all agents $i$ that are conformists, as it is the same for all nonconformists, for all coordinating imitators, and for all anticoordinating imitators.

\end{assumption}

Note that the positivity assumption guaranties the irreducibility necessary for the perturbed dynamics, and assuming the ``uniformity'' inside subpopulations is necessary for the Markov property.

For the explained regular perturbation, the cost function $c_{\X}$ is zero exactly for those $(\x,\y)$ that $\y$ can be obtained from $\x$ in one step without trembling, i.e., when $P_{\x\y}>0$ . Moreover, $c_{\X}(\x,\y) = 1$ when exactly one mistake is needed to go from $\x$ to $\y$ in one step, i.e., for those $(\x,\y)$ that $P_{\x\y}=0$ and $P_{\x\y}^\varepsilon>0$ when $\varepsilon$ is small enough. Otherwise, $c_{\X}(\x, \y)=+\infty$. 

In the next subsection, we review some related notions and results we need. For a more comprehensive reading including richer results on identifying stochastically stable states, we refer the reader to \cite{young1993evolution,ellison2000basins,rozen2008dual}.

\subsection{Preliminaries}\label{subsec:preliminaries}
In this subsection, we abstract away the game-theoretic details, and only focus on the ongoing dynamics, represented as Markov chains. As noted before, it is proved in~\cite[Theorem 4]{young1993evolution} that the maximal stochastically stable set of a regular perturbation of a Markov transition matrix $P$ is a non-empty union of the recurrent classes of $P$. Intuitively, a stochastically stable recurrent class can be reached relatively quickly from outside the class, and the chain leaves the class with a relatively small probability. This intuition can be rigorously explained using the cost function in~\cref{def:evolution_with_noise}. Note that transitions between different states can occur trough paths of longer lengths, while the function $c_{\X}$ only describes the chance of one-step transitions. Using this function, we define an optimal path between any two states, or more generally, any two non-empty sets. For non-empty subsets $\mathcal U, \mathcal V\subseteq\X$, a \emph{path} from $\mathcal U$ to $\mathcal V$ in $\X$ is an ordered finite tuple $(\z_1,\z_2,\ldots,\z_T)$ of distinct states of $\X$, where $\z_1\in \mathcal U$, $\z_T\in \mathcal V$, and $\z_t\not\in \mathcal V$ for $2\leq t\leq T-1$. To any path $(\z_1,\z_2,\ldots,\z_T)$, assign the \emph{cost} value
\begin{align*}\label{eq:path_cost}
c(\z_1,\z_2,\ldots,\z_T) = \sum_{i=1}^{T-1}c_{\X}(\z_i,\z_{i+1}),
\end{align*}
and define the \emph{transition cost} from $\mathcal U$ to $\mathcal V$, denoted by $c(\mathcal U,\mathcal V)$, to be the minimum cost of a path from $\mathcal U$ to $\mathcal V$:
\begin{align}\label{eq:generalized_cost}
c(\mathcal U,\mathcal V) = \min_{(\z_1,\ldots,\z_T)\in\Pi(\mathcal U,\mathcal V)}c(\z_1,\ldots,\z_T), 
\end{align}
where $\Pi(\mathcal U,\mathcal V)$ is the set of all paths from $\mathcal U$ to $\mathcal V$ in $\X$. By irreducibility of $P^\varepsilon$s, there exist paths of positive probability and finite length from an arbitrary state to any other state in the dynamics. Hence, $c(\mathcal U,\mathcal  V)$ is finite for non-empty $\mathcal U$ and $\mathcal  V$. 
In what follows, we use states instead of singletons for convenience, e.g., $c(\x,\y)$, that should not be confused with $c_{\X}(\x,\y)$, and $\Pi(\x,\mathcal U)$ are used respectively in place of $c(\{\x\},\{\y\})$ and $\Pi(\{\x\},\mathcal U)$.
\begin{remark}
Note that in the mixed binary-type population dynamics, for any non-empty $\mathcal U, \mathcal V\subseteq \mathcal X$, the cost function $c(\mathcal U,\mathcal V)$ is the minimum number of mistakes necessary to have a transition from an element of $\mathcal U$ to an element of $\mathcal V$.
\end{remark}

Using~\eqref{eq:generalized_cost}, we define a complete weighted digraph $\mathcal G(P^\varepsilon)$, with the vertex set consisting of all recurrent classes of $P$, denoted by $\Omega_1,\ldots,\Omega_k$. For $i\neq j$, we assign the weight $c(\Omega_i, \Omega_j)$ to the edge $\Omega_i\longrightarrow\Omega_j$. For a recurrent class $\Omega$, by an \emph{$\Omega$-tree} we mean a spanning subtree of $\mathcal G(P^\varepsilon)$ rooted in $\Omega$.
\begin{definition}\label{def:i-tree}
For the regular perturbation $P^\varepsilon$ of $P$, suppose $\Omega$ is a recurrent class of $P$. An \emph{$\Omega$-tree} is a spanning subtree of $\mathcal G(P^\varepsilon)$ such that for any recurrent class $\Omega'\neq\Omega$, there is a unique directed path in this subtree from $\Omega'$ to $\Omega$. 
\end{definition}

Denote the set of all $\Omega$-trees for a given $\Omega$ by $\mathcal T_{\Omega}$, and define $\gamma(\Omega)$ as the minimum (additive) weight among all elements of $\mathcal T_{\Omega}$; that is,
\begin{align}\label{eq:gamma}
\gamma(\Omega) = \min_{\tau\in\mathcal T_{\Omega}}\sum_{\Omega_i\to\Omega_j\in\tau}c(\Omega_i,\Omega_j).
\end{align}
The following has been proven in \cite{young1993evolution}.

\begin{proposition}\label{prop:young_class_stability}
Consider the regular perturbation $P^\varepsilon$ and let $\Omega_1, \ldots, \Omega_k$ be the recurrent classes of $P$. A recurrent class $\Omega$ is stochastically stable if and only if 
\begin{align*}\label{eq:argmin_tree}
\Omega\in\argmin_{\Omega_j\in\{\Omega_1,\ldots,\Omega_k\}}\gamma(\Omega_j).
\end{align*}
\end{proposition}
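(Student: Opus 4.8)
The plan is to deduce the proposition from the Markov chain tree theorem together with the cost-function structure imposed by \cref{def:evolution_with_noise}. Because $P^\varepsilon$ is irreducible on the finite set $\X$, its unique stationary distribution admits, for every $\x\in\X$, the representation
\begin{equation*}
    \bm{\mu}^\varepsilon(\x) = \frac{q^\varepsilon(\x)}{\sum_{\y\in\X} q^\varepsilon(\y)},
    \qquad
    q^\varepsilon(\x) = \sum_{T\in\mathcal{T}(\x)}\ \prod_{(\y,\z)\in T} P^\varepsilon_{\y\z},
\end{equation*}
where $\mathcal{T}(\x)$ is the set of spanning in-trees of the complete digraph on $\X$ directed toward $\x$ (each vertex other than $\x$ having exactly one outgoing edge), and the product runs over the directed edges of $T$. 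By the cost-function property, for small $\varepsilon$ each factor $P^\varepsilon_{\y\z}$ equals a strictly positive constant times $\varepsilon^{c_{\X}(\y,\z)}$ up to a $(1+o(1))$ factor, and is eventually zero when $c_{\X}(\y,\z)=\infty$. Hence $q^\varepsilon(\x)=b(\x)\,\varepsilon^{\rho(\x)}(1+o(1))$ for some $b(\x)>0$, where $\rho(\x)=\min_{T\in\mathcal{T}(\x)}\sum_{(\y,\z)\in T} c_{\X}(\y,\z)$. Dividing through and letting $\varepsilon\to0$ shows $\bm{\mu}^*(\x)>0$ if and only if $\rho(\x)=\rho^*$, where $\rho^*=\min_{\y\in\X}\rho(\y)$.

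Next I would identify where $\rho^*$ is attained. Passing to the limit in $\bm{\mu}^\varepsilon P^\varepsilon=\bm{\mu}^\varepsilon$ and using $P^\varepsilon\to P$ gives $\bm{\mu}^* P=\bm{\mu}^*$, so $\bm{\mu}^*$ is a stationary distribution of $P$; hence it is supported on the union of the recurrent classes $\Omega_1,\ldots,\Omega_k$, and on any $\Omega_i$ that it charges it agrees with the everywhere-positive stationary distribution of $P$ restricted to $\Omega_i$. So no transient state is stochastically stable, each recurrent class is stochastically stable either wholly or not at all, $\rho^*$ is attained on some recurrent class, and $\Omega_i$ is stochastically stable exactly when $\min_{\x\in\Omega_i}\rho(\x)=\rho^*$. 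The whole proposition then reduces to the identity $\min_{\x\in\Omega_i}\rho(\x)=\gamma(\Omega_i)$ for each $i$: granting it, $\rho^*=\min_i\gamma(\Omega_i)$, and $\bm{\mu}^*(\x)>0$ iff $\x$ lies in a class $\Omega_i$ with $\gamma(\Omega_i)=\min_j\gamma(\Omega_j)$, i.e.\ iff that class belongs to $\argmin_{\Omega_j}\gamma(\Omega_j)$.

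The combinatorial identity $\min_{\x\in\Omega}\rho(\x)=\gamma(\Omega)$ is the heart of the argument, and I expect the bookkeeping here to be the main obstacle: it amounts to a cost-preserving translation between spanning in-trees of $\X$ and spanning in-trees of the reduced digraph $\mathcal{G}(P^\varepsilon)$ on the recurrent classes. For the inequality ``$\le$'', start from a minimum-weight $\Omega$-tree of $\mathcal{G}$, realize each of its edges $\Omega_i\to\Omega_j$ by a minimum-cost path in $\X$ (of cost $c(\Omega_i,\Omega_j)$) out of $\Omega_i$ into $\Omega_j$, span each recurrent class internally toward the appropriate exit state using only cost-$0$ edges (possible since $P$ restricted to a recurrent class is irreducible), and lead every transient state into a recurrent class along a cost-$0$ path (possible since $P$ reaches the recurrent set from any state with probability one); the delicate point is to choose these pieces so that their union is a single acyclic spanning in-tree rooted in $\Omega$, whose total cost is then exactly $\gamma(\Omega)$. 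For ``$\ge$'', take a minimum-cost $\X$-tree rooted at some $\x\in\Omega$, argue it may be chosen with all of its recurrent-class-internal edges of cost $0$, contract each recurrent class to a point, and check that the contracted graph contains an $\Omega$-tree of $\mathcal{G}$ of weight at most the cost of the $\X$-tree. Combining the two inequalities with the reductions above yields the proposition; this is the line of reasoning of \cite{young1993evolution}, from which the statement is quoted.
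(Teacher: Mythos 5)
This proposition is not proven in the paper at all: it is quoted verbatim from the literature, with the paper simply stating ``The following has been proven in \cite{young1993evolution}.'' Your outline is exactly the standard argument behind that cited result -- the Markov chain tree theorem giving $\bm{\mu}^\varepsilon(\x)\propto q^\varepsilon(\x)$, the regular-perturbation condition turning each spanning in-tree weight into $\varepsilon^{\text{(total cost)}}$ so that $\bm{\mu}^*(\x)>0$ iff $\rho(\x)$ is minimal, the observation that $\bm{\mu}^*$ is $P$-stationary and hence charges whole recurrent classes, and finally the reduction $\min_{\x\in\Omega}\rho(\x)=\gamma(\Omega)$ -- so there is no divergence of approach to report, only a comparison with Young's own proof. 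The one place where your write-up is a plan rather than a proof is the combinatorial identity itself, and the delicate points you flag are the genuine ones: in the ``$\leq$'' direction, the minimum-cost realizations of the reduced-graph edges may pass through other recurrent classes or share transient states, so assembling them (together with cost-zero internal spanning of each class and cost-zero drainage of transient states) into a single acyclic spanning in-tree requires the surgery done in Young's appendix; in the ``$\geq$'' direction, after contracting each class the resulting object is generally a multigraph with several exits per class, and one must extract an $\Omega$-tree of no larger weight, using non-negativity of costs and the fact that each exit segment, up to its first entry into another class, is a path of cost at least the corresponding $c(\Omega_i,\Omega_j)$. These steps all go through, so the proposal is correct in structure, but a self-contained proof would need that bookkeeping carried out rather than deferred.
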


\cref{prop:young_class_stability} can be utilized to identify the maximal stochastically stable set. However, since the number of rooted spanning subtrees of $\mathcal G(P^\varepsilon)$ grows exponentially with the number of recurrent classes, more efficient methods than finding a minimum $\Omega$-tree for every recurrent class $\Omega$ are needed. In~\cite{ellison2000basins}, Ellison defines other quantities helpful in finding stochastically stable sets. One is the \emph{radius} of a recurrent class $\Omega$, which is a measure of the ``persistence'' of $\Omega$ once reached.

 \begin{definition}\label{def:basin_radius}
 Suppose $\Omega$ is a recurrent class of a Markov chain on $\X$ with transition matrix $P$. The \emph{basin of attraction} of $\Omega$ is the set of initial states from which the Markov chain reaches $\Omega$ with probability one, i.e.,
 \begin{equation*}
 \mathcal D(\Omega)=\left\{\x\in \X:\mathbb P [\exists T\in\mathbb N\  \x(T) \in\Omega |\x(0)=\x]=1\right\},
 \end{equation*}
 where $\x(t)$ denotes the state of the chain at time $t\in\mathbb Z_{\geq0}$.
 For the regular perturbation $P^\varepsilon$ of $P$, the \emph{radius} of the basin of attraction of $\Omega$, i.e., $R(\Omega)$, is the \emph{cost of leaving} $\mathcal D(\Omega)$ starting from $\Omega$; that is,
 \begin{align*}
 R(\Omega) = c(\Omega, \X\setminus \mathcal D(\Omega)).
 \end{align*}
 \end{definition}
The following lemma, that will be proved in~\cref{app:modified_cost}, is based on~\cite[Theorem 3]{ellison2000basins}.
\begin{lemma}\label{lem:c_instead_of_c*}
 Let $P^\varepsilon$ be a regular perturbation of $P$. For a recurrent class $\Omega$ of $P$, and a state $\x\not\in\Omega$, if we have $R(\Omega)>c(\x,\Omega)$, then $\bm{\mu}^*(\x)=0$. If $R(\Omega)=c(\x,\Omega)$, then $\bm{\mu}^*(\x)>0$ implies $\bm{\mu}^*(\Omega)>0$.
\end{lemma}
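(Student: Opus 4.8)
The plan is to deduce the lemma from Ellison's modified radius--coradius machinery \cite{ellison2000basins}. Recall that \cite{ellison2000basins} works with a \emph{modified} transition cost $c^*(\x,\Omega)$, obtained from $c(\x,\Omega)$ by discounting the radii of the recurrent classes that an optimal path from $\x$ to $\Omega$ passes through, so that $c^*(\x,\Omega)\le c(\x,\Omega)$ always, and \cite[Theorem~3]{ellison2000basins} yields $\bm{\mu}^*(\x)=0$ whenever $R(\Omega)>c^*(\x,\Omega)$, with the boundary refinement that $R(\Omega)=c^*(\x,\Omega)$ together with $\bm{\mu}^*(\x)>0$ forces $\bm{\mu}^*(\Omega)>0$. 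The lemma then follows by feeding in the cruder bound $c^*\le c$. Concretely, the remaining work is to recast $c^*$ in our Markov-chain notation, to verify $c^*(\x,\Omega)\le c(\x,\Omega)$ (this is what the appendix title ``modified cost'' refers to), and to make the boundary statement self-contained, which is short via the tree argument below.

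First I would clear away the trivial case. Passing to the limit in $\bm{\mu}^\varepsilon P^\varepsilon=\bm{\mu}^\varepsilon$ using $P^\varepsilon\to P$ gives $\bm{\mu}^*P=\bm{\mu}^*$, so $\bm{\mu}^*$ is a stationary distribution of $P$ and hence is supported on recurrent states; thus if $\x$ is transient both assertions hold vacuously. So I may assume $\x$ belongs to a recurrent class $\Omega_x$, necessarily $\Omega_x\neq\Omega$. Since the normalisation of $\bm{\mu}^*$ to $\Omega_x$ is the (full-support) stationary law of the irreducible chain $P|_{\Omega_x}$, we have $\bm{\mu}^*(\x)>0\iff\bm{\mu}^*(\Omega_x)>0\iff\Omega_x$ is stochastically stable, and likewise ``$\Omega$ stochastically stable'' means $\bm{\mu}^*(\Omega)>0$; moreover $c(\x,\Omega)=c(\Omega_x,\Omega)$ because any two states of a recurrent class are joined by a zero-cost path. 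So it suffices to show: (a) $R(\Omega)>c(\Omega_x,\Omega)$ implies $\Omega_x$ is not stochastically stable; (b) $R(\Omega)=c(\Omega_x,\Omega)$ together with $\Omega_x$ stochastically stable implies $\Omega$ stochastically stable.

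By \cref{prop:young_class_stability}, a recurrent class $\Omega'$ is stochastically stable iff $\gamma(\Omega')=\min_j\gamma(\Omega_j)$. Hence (a) and (b) both follow from the single inequality
\begin{equation*}
    \gamma(\Omega)\ \le\ \gamma(\Omega_x)+c(\Omega_x,\Omega)-R(\Omega),
\end{equation*}
together with the trivial bound $\gamma(\Omega)\ge\min_j\gamma(\Omega_j)$: if $R(\Omega)>c(\Omega_x,\Omega)$, then $\Omega_x$ being stochastically stable would force $\gamma(\Omega)<\gamma(\Omega_x)=\min_j\gamma(\Omega_j)$, impossible, so $\bm{\mu}^*(\x)=0$; and if $R(\Omega)=c(\Omega_x,\Omega)$ with $\Omega_x$ stochastically stable, the inequality gives $\gamma(\Omega)\le\gamma(\Omega_x)=\min_j\gamma(\Omega_j)$, hence $\gamma(\Omega)=\min_j\gamma(\Omega_j)$ and $\Omega$ is stochastically stable. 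I would prove this inequality by tree surgery on the digraph $\mathcal G(P^\varepsilon)$, exactly as in the proof of the radius--coradius theorem of \cite{ellison2000basins}: start from a minimum-weight $\Omega_x$-tree $\tau$; its unique directed path from $\Omega$ to $\Omega_x$ must leave the basin $\mathcal D(\Omega)$ because $\Omega_x\cap\mathcal D(\Omega)=\emptyset$ (a recurrent class other than $\Omega$ cannot lie in $\mathcal D(\Omega)$); the initial segment of that path up to its first exit from $\mathcal D(\Omega)$ is a path from $\Omega$ to $\X\setminus\mathcal D(\Omega)$, so it has weight at least $R(\Omega)$; delete it from $\tau$, splice in a least-cost path from $\Omega_x$ to $\Omega$ of weight $c(\Omega_x,\Omega)$, and re-root the resulting forest at $\Omega$ to obtain an $\Omega$-tree of weight at most $\gamma(\Omega_x)+c(\Omega_x,\Omega)-R(\Omega)$.

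The main obstacle is the bookkeeping in this surgery: the vertices of $\mathcal G(P^\varepsilon)$ are recurrent classes, not states, so one must be careful about what ``the segment that exits $\mathcal D(\Omega)$'' is, about which components the deletions create, and about splicing in the cheap $\Omega_x$-to-$\Omega$ path without creating a cycle while keeping every class joined to $\Omega$ by a unique directed path. Being economical about exactly which deleted weight is recovered is precisely what the modified cost $c^*$ of \cite{ellison2000basins} formalises; I would avoid that here and use only the crude estimate ``weight of the exiting segment $\ge R(\Omega)$'', which already suffices. The remaining ingredients --- that $\bm{\mu}^*P=\bm{\mu}^*$, that stationary measures of $P$ charge only recurrent states, and that $\bm{\mu}^\varepsilon$ renormalised to a recurrent class converges to the unique stationary law there --- are routine facts about finite Markov chains.
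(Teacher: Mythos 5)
Your argument is correct, but it takes a genuinely different route from the paper. The paper's proof is a two-line deduction from Ellison's result: it restates \cite[Theorem 3]{ellison2000basins} (the version of \cref{lem:c_instead_of_c*} with the modified cost $c^*$, given as \cref{prop:modified_cost} in the appendix), observes that $c^*(\x,\Omega)\leq c(\x,\Omega)$ since $c^*$ subtracts nonnegative radii from $c$, and then reads off both claims — for the boundary case, the first part of \cref{prop:modified_cost} forces $c^*(\x,\Omega)=c(\x,\Omega)$ whenever $\bm{\mu}^*(\x)>0$, so the second part applies. You instead bypass $c^*$ and Ellison's theorem entirely: after reducing to the class level (showing $\bm{\mu}^*$ charges only recurrent states, that positivity of $\bm{\mu}^*$ on one state of a class propagates to the whole class, and that $c(\Omega_x,\Omega)\leq c(\x,\Omega)$, which is the only direction you actually use), you derive everything from \cref{prop:young_class_stability} via the tree-surgery inequality $\gamma(\Omega)\leq\gamma(\Omega_x)+c(\Omega_x,\Omega)-R(\Omega)$. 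That inequality is correct and your sketch of it goes through; the bookkeeping you worry about is lighter than you suggest, because at the class level the unique outgoing edge of $\Omega$ in a minimum $\Omega_x$-tree already has weight at least $R(\Omega)$ (any recurrent class other than $\Omega$ is disjoint from $\mathcal D(\Omega)$, and a prefix of any state-level path realizing that edge exits $\mathcal D(\Omega)$), so the surgery is just: delete that single edge, add the edge $\Omega_x\to\Omega$ of weight $c(\Omega_x,\Omega)$, and check the result is still a spanning arborescence rooted at $\Omega$. What the two approaches buy: the paper's proof is shorter and delegates all combinatorics to \cite{ellison2000basins}, at the price of importing the modified cost $c^*$ as scaffolding it never otherwise uses; your proof is self-contained modulo Young's characterization, in effect reproving the unmodified-cost weakening of Ellison's radius theorem, which makes transparent why only $c$, not $c^*$, is needed for this lemma.
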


 \subsection{Mixed binary-type populations: results}
 
Since this work is focused on comparing the stochastic stability of mixed and extreme equilibria of mixed binary-type populations described in~\cref{subsec: mixed_b_type_pop_set-up}, we first characterize the general forms of equilibrium points of this model. Same as before, we call an equilibrium mixed if the number of its cooperating imitators belongs to $\{1,\ldots,m-1\}$. Other equilibria are called extremes. According to~\cref{mix_equil}, any mixed equilibrium takes either of the forms $(x_1^I, n^a, x_2^I, 0)$ where
\begin{equation*}
C^a(x_1^I+n^a+x_2^I) = D^c(x_1^I+n^a+x_2^I),
\end{equation*}
or $(x_1^I,0,x_2^I, n^c)$ where
\begin{equation*}
C^c(x_1^I+x_2^I+n^c) = D^a(x_1^I+x_2^I+n^c).
\end{equation*}
As a result, there exists at most two distinct values of $r\in\{1,\ldots, m-1\}$ that correspond to the frequency of cooperating imitators at a mixed equilibrium. Note that the multiplicity of mixed equilibria with a specific number of cooperating imitators is either zero or at least two. If, say, $(x_1^I, n^a, x_2^I, 0)$ is a mixed equilibrium and $x_1^I+x_2^I = r$, then any state $(z_1^I, n^a, z_2^I, 0)$, where $z_1^I+z_2^I = r$, $z_1^I\in\{0,\ldots,m^a\}$, and $z_2^I\in\{0,\ldots,m^c\}$ is a mixed equilibrium. We call these mixed equilibria \emph{congruent}.  The situation is similar with mixed equilibria of type $(x_1^I,0,x_2^I, n^c)$. According to~\cref{defect_equil,coop_equil}, every extreme equilibrium takes one of the forms $(0, 0, 0, 0)$, $(0, n^a, 0, 0)$, $(m^a, n^a, m^c, 0)$, $(0, 0, 0, n^c)$, $(m^a, 0, m^c, n^c)$, or $(m^a, n^a, m^c, n^c)$, depending on the parameters of the population and utility functions. 

We will show that the stochastic stability of a mixed equilibrium transmits to that of its ``neighboring'' extreme equilibrium defined as follows. 
\begin{definition}\label{def:corresponding_extreme}
Given the mixed equilibrium $(x_1^I, n^a, x_2^I, 0)$ (resp. $(x_1^I, 0, x_2^I, n^c)$), we define its corresponding extreme state as $(0, n^a, 0, 0)$ (resp. $(m^a, 0, m^c, n^c)$).
\end{definition}

Depending on the dominant utility functions around a mixed equilibrium, its corresponding extreme state may be an equilibrium. If this is the case, ~\cref{thm:stochastically_stable_equilibria} states that the stochastic stability of the mixed equilibrium implies the stochastic stability of its corresponding extreme state. In~\cref{fig:schematic_utility_lines} the utility lines in dynamics with non-congruent mixed equilibria are depicted. In this figure, it is supposed that the corresponding extreme state of each mixed equilibrium is  an equilibrium. The position of utility lines around each mixed equilibrium is the same as what is shown in~\cref{fig:schematic_utility_lines} even if we relax the constraint of the existence of the other non-congruent mixed equilibrium. Note that $\tau^a\neq\tau^c$ and $0<\tau^a,\tau^c<n$ necessarily hold in such dynamics. 
        \begin{figure*}[h!]
            \begin{subfigure}[t]{0.49\textwidth}
            \centering
            \includegraphics[trim= {2.5cm 6cm 2cm 6cm}, clip, width=\textwidth]{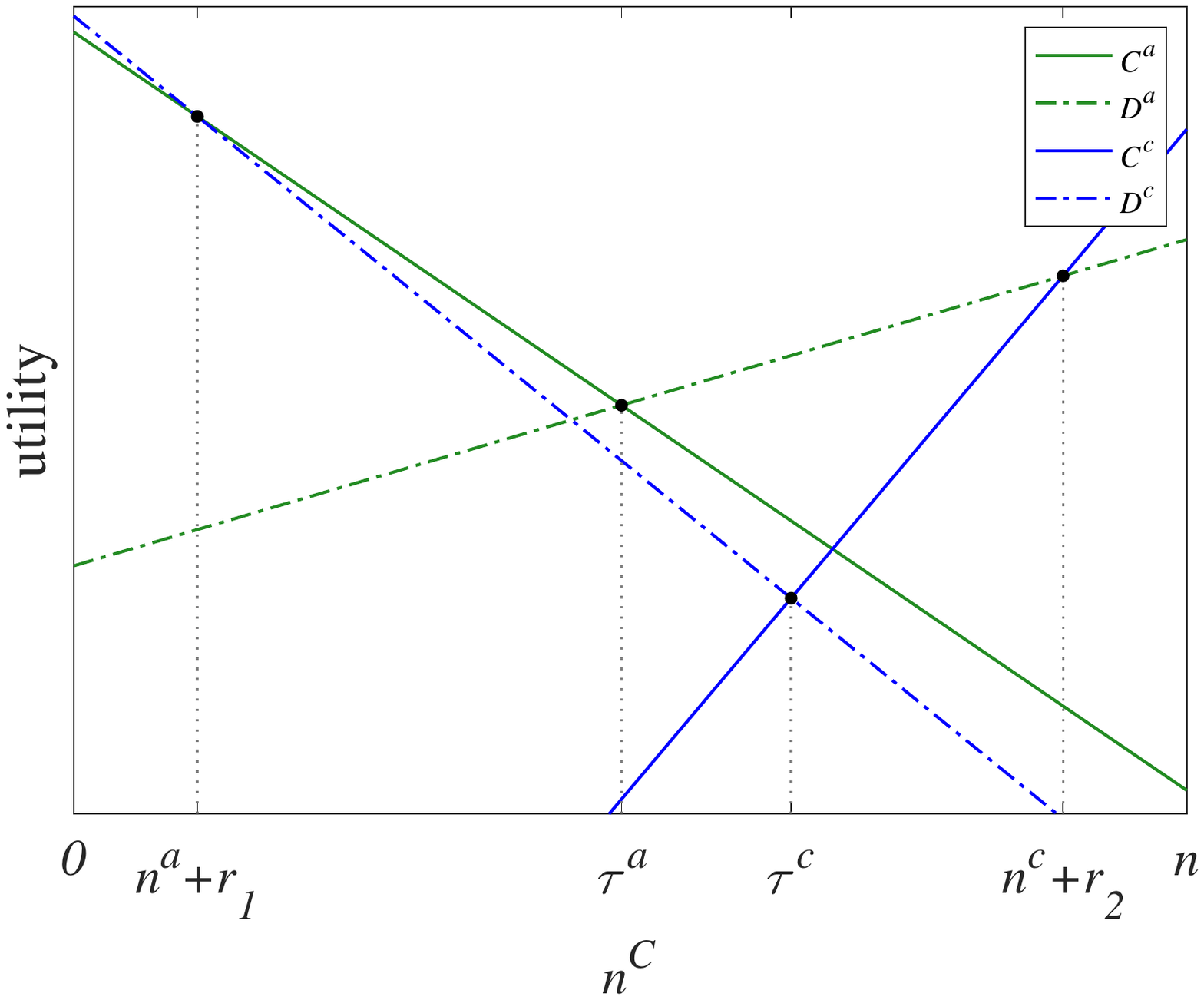}
            \caption{
           {\boldsymbol{$0<\tau^a<\tau^c<n$}}
            }
            \label{fig:schematic_utility_lines_left} 
        \end{subfigure}
        \hfill
        \begin{subfigure}[t]{0.49\textwidth}
            \centering
            \includegraphics[trim ={2.5cm 6cm 2cm 6cm}, clip, width=\textwidth]{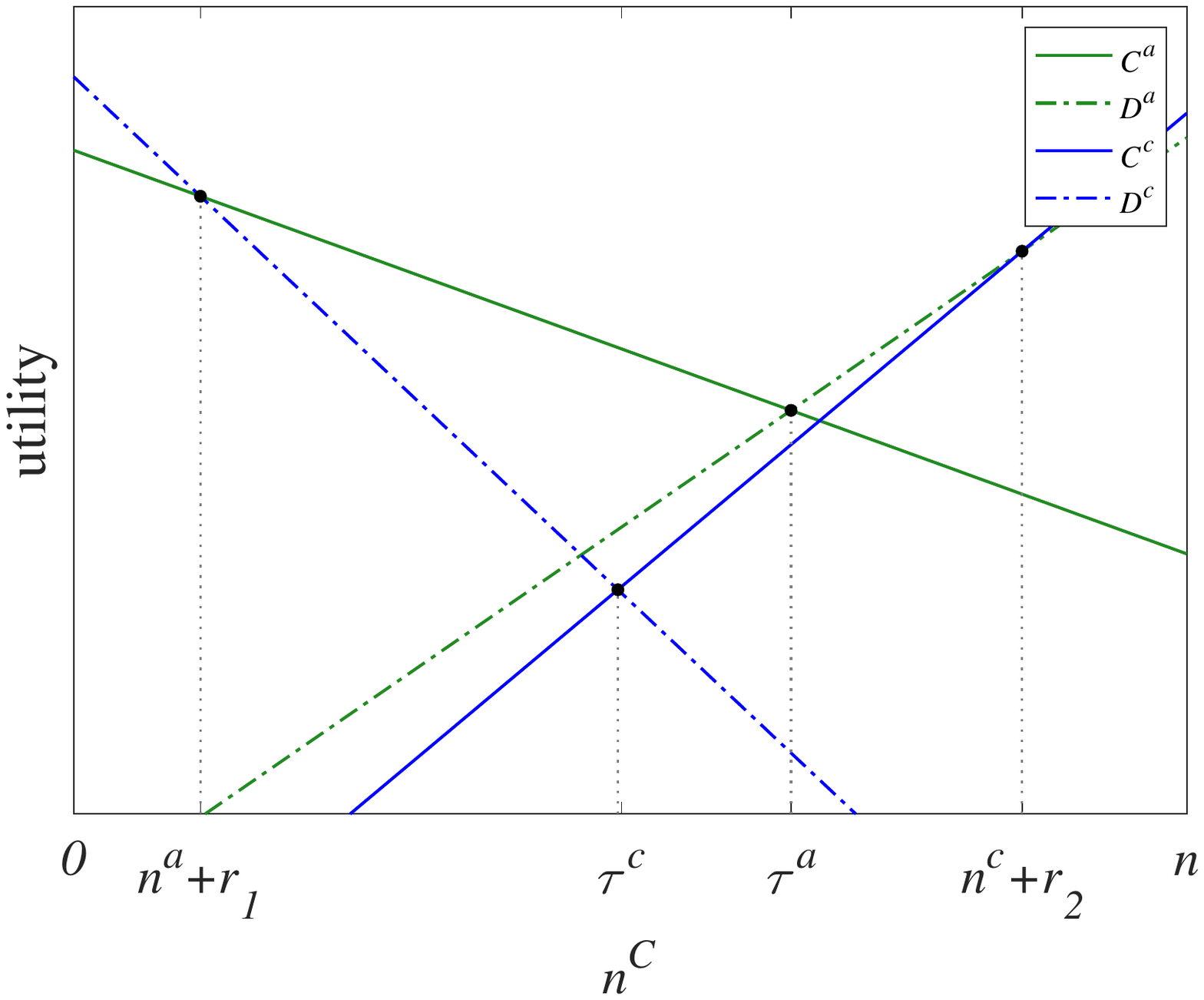}
            \caption{
            {$\boldsymbol{0<\tau^c<\tau^a<n}$}
              }
               \label{fig:schematic_utility_lines_right} 
               \end{subfigure}
        \caption{Utility lines of population dynamics with two mixed equilibria that their extreme states are equilibrium points.}
    \label{fig:schematic_utility_lines}
    \end{figure*}
        
\begin{theorem}\label{thm:stochastically_stable_equilibria}
Consider the unperturbed dynamics of a mixed binary-type population and the described regular perturbation. Suppose that in the unperturbed dynamics, the corresponding extreme state of each mixed equilibrium is an equilibrium. Then the set of stochastically stable equilibria is either empty or includes an extreme equilibrium.
\end{theorem}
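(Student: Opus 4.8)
The plan is to prove the following stronger local statement: \emph{if a mixed equilibrium is stochastically stable, then so is its corresponding extreme state}; since by hypothesis that extreme state is an equilibrium, the theorem follows at once. So assume the set of stochastically stable equilibria is non-empty and, towards a contradiction, contains no extreme equilibrium; then it contains a mixed equilibrium, say $\x^m=(x_1^I,n^a,x_2^I,0)$ with $C^a(N^m)=D^c(N^m)$, where $N^m=x_1^I+n^a+x_2^I$ (the form $(x_1^I,0,x_2^I,n^c)$ is entirely symmetric, with the roles of $C/D$ and of $a/c$ interchanged and the cascade below running upward). Let $\Omega=(0,n^a,0,0)$ be its corresponding extreme state (\cref{def:corresponding_extreme}); by hypothesis $\Omega$ is an equilibrium, hence a singleton recurrent class of $P$, and $\x^m\notin\{\Omega\}$. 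I will apply \cref{lem:c_instead_of_c*} to $\Omega$ and $\x=\x^m$, so I need $c(\x^m,\Omega)\le R(\Omega)$, which I get from $c(\x^m,\Omega)\le1$ and $R(\Omega)\ge1$.

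For $R(\Omega)\ge1$: a zero-cost step is exactly a positive-probability one-step transition of the \emph{unperturbed} chain, so it suffices that the basin of attraction $\mathcal D(\Omega)$ be forward-invariant under the unperturbed dynamics. This is a one-step argument: if $\x\in\mathcal D(\Omega)$ and $P_{\x\y}>0$ with $\y\notin\mathcal D(\Omega)$, then with positive probability the chain steps to $\y$ and thereafter fails to reach $\Omega$, contradicting $\x\in\mathcal D(\Omega)$. As $\Omega$ is a fixed point of $P$, $\Omega\in\mathcal D(\Omega)$, so no zero-cost path leaves $\mathcal D(\Omega)$ and $R(\Omega)=c(\Omega,\X\setminus\mathcal D(\Omega))\ge1$.

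For $c(\x^m,\Omega)\le1$: since $\x^m$ is mixed it has a cooperating imitator; flip her to $D$ (one mistake), reaching a state with $N^m-1$ cooperators. The hypothesis enters here through the geometry of the utility lines on $[n^a,N^m]$: at $N^m$ the top cooperator line is $C^a$ and the top defector line is $D^c$, while at $\Omega$ a defecting best-responder must earn the most, forcing $D^c(n^a)\ge C^a(n^a)$; since $C^a$ and $D^c$ are lines agreeing at $N^m$, linearity gives $D^c(N)>C^a(N)$ for every $N\in[n^a,N^m)$ (the coincidence case $C^a\equiv D^c$ would produce a continuum of mixed equilibria and is excluded). Moreover, because a nonconformist is always cooperating and a conformist always defecting and because $C^a\ge C^c$, $D^c\ge D^a$ throughout $[n^a,N^m]$, we have $C(N)=C^a(N)$ and $D(N)=D^c(N)$ on that range. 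Hence, starting from $N^m-1$ and activating cooperating imitators, each switches to $D$ (as $D(N)>C(N)$) while no best-responder moves (as $N<N^m<\min\{\tau^a,\tau^c\}$); the unperturbed dynamics thus reach $\Omega$, giving a cost-$1$ path and $c(\x^m,\Omega)\le1$.

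Combining, $c(\x^m,\Omega)\le1\le R(\Omega)$. If $R(\Omega)>c(\x^m,\Omega)$, then \cref{lem:c_instead_of_c*} gives $\bm{\mu}^*(\x^m)=0$, contradicting the stochastic stability of $\x^m$; hence $R(\Omega)=c(\x^m,\Omega)$ and the second clause of the lemma yields $\bm{\mu}^*(\x^m)>0\Rightarrow\bm{\mu}^*(\Omega)>0$, so $\Omega$ is stochastically stable. As $\Omega$ is an extreme equilibrium, this contradicts our assumption, completing the proof. The main obstacle is the geometric step in the third paragraph: turning ``$\Omega$ is an equilibrium'' (together with the defining equality of $\x^m$) into the strict inequality $D^c(N)>C^a(N)$ on all of $[n^a,N^m)$, so that a single downward tremble provably triggers a full cascade to $\Omega$ — and carrying out the mirror-image argument for mixed equilibria of the other form, whose corresponding extreme state is $(m^a,0,m^c,n^c)$ and whose cascade runs upward.
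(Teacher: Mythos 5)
Your proposal is correct and follows essentially the same route as the paper: one tremble by a cooperating (resp.\ defecting) imitator followed by a costless imitation cascade yields $c(\x^m,\Omega)\le 1\le R(\Omega)$, and \cref{lem:c_instead_of_c*} then transmits stochastic stability from the mixed equilibrium to its corresponding extreme equilibrium, exactly as in the paper's proof of \cref{thm:stochastically_stable_equilibria}. Your explicit linearity argument for the strict dominance of $D^c$ below $N^m$ and your forward-invariance justification of $R(\Omega)\ge 1$ merely spell out what the paper reads off \cref{fig:schematic_utility_lines}, including the same implicit exclusion of the degenerate case $C^a\equiv D^c$.
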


\begin{proof}
The result is trivial should the dynamics be empty of equilibrium. Assume that there exists a mixed equilibrium $\x$. According to the assumptions, its corresponding extreme state $\y$ is an extreme equilibrium. Since $\{\y\}$ is an invariant set (a recurrent class) of the unperturbed dynamics, the Markov chain may not leave it unless an agent makes a mistake, resulting in $R(\y)\geq 1$. We prove that $c(\x,\y)=1$, and thus,     
\begin{align*}
    R(\y)\geq c(\x,\y).
\end{align*}
    Then by means of~\cref{lem:c_instead_of_c*}, we conclude that either $\x$ is not stochastically stable, or both $\x$ and $\y$ are. 
    
    Note that since $\x$ is an equilibrium, $c(\x,\y)\geq 1$. So to show $c(\x,\y)=1$, it suffices to introduce a path of cost $1$ from $\x$ to $\y$.
        
    \emph{Case 1:} $\x=(x_1^I,n^a,x_2^I,0)$, and hence, $\y = (0,n^a,0,0)$. Since $r = x_1^I+x_2^I\geq1$, without loss of generality assume that $x_1^I\geq 1$. A mistake by an active anticoordinating imitator who is cooperating, leads to the state $(x_1^I-1,n^a,x_2^I,0)$. Since $C^a(r+n^a) = D^c(r+n^a)$, and $\y$ is an equilibrium, $D^c$ is the dominant utility line at all states in the form of $(z_1^I, n^a, z_2^I, 0)$ with $0\leq z_1^I+z_2^I\leq r-1$ (\cref{fig:schematic_utility_lines}). Therefore, there is a costless path from $(x_1^I-1,n^a,x_2^I,0)$ to $\y$, which can be obtained by activating cooperating imitators successively. So the total cost of the given path from $\x$ to $\y$ is one.
    
    \emph{Case 2:} $\x=(x_2^I,0,x_2^I,n^c)$, and hence, $\y = (m^a, 0, m^c, n^c)$.  Since $r = x_1^I+x_2^I\leq m-1$,  we assume without loss of generality that $x_I^I\leq m^a-1$. A mistake by an active anticoordinating imitator which is defecting, changes the state of the system to $(x_1^I+1,0,x_2^I,n^c)$. $C^c(r+n^c) = D^a(r+n^c)$, and at $\y$ the dominant utility function is $C^c$. Therefore, at all states in form of $(z_1^I, 0, z_2^I, n^c)$, with $r+1\leq z_1^I+z_2^I\leq m$, $C^c$ is the dominant utility line (\cref{fig:schematic_utility_lines}). Hence, there exists a costless path from $(x_1^I+1, 0, x_2^I, n^c)$ to $\y$, which can be obtained by activating defecting imitators one after another. So we have a path of cost $1$ from $\x$ to $\y$.
\end{proof}

\begin{corollary}\label{cor:stochastically_stable_equilibria}
Suppose that the mixed binary-type population dynamics admit the extreme equilibria $(0, n^a, 0, 0)$ and $(m^a, 0, m^c, n^c)$. Then, the maximal stochastically stable set of the corresponding regular perturbation is a non-empty union of the non-singleton minimal invariant sets of the unperturbed dynamics or includes an extreme equilibrium.
\end{corollary}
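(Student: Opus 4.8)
The plan is to read this off from two ingredients that are already available: the structural fact, recalled after \cref{def:stoch_stable}, that the maximal stochastically stable set is a non-empty union of recurrent classes of $P$ — which for our dynamics are exactly the minimal invariant sets of the unperturbed system — together with \cref{thm:stochastically_stable_equilibria}. So the whole argument is a short case split once we have checked that the hypothesis of \cref{thm:stochastically_stable_equilibria} is in force here.

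First I would verify that hypothesis. The assumption of the corollary is that $(0,n^a,0,0)$ and $(m^a,0,m^c,n^c)$ are equilibria. By the classification of mixed equilibria recalled just before \cref{def:corresponding_extreme}, every mixed equilibrium has the form $(x_1^I,n^a,x_2^I,0)$ or $(x_1^I,0,x_2^I,n^c)$, and \cref{def:corresponding_extreme} assigns to these the corresponding extreme states $(0,n^a,0,0)$ and $(m^a,0,m^c,n^c)$, respectively. Hence, under the corollary's hypothesis, the corresponding extreme state of every mixed equilibrium is an equilibrium (vacuously so if no mixed equilibrium exists), which is precisely the standing assumption of \cref{thm:stochastically_stable_equilibria}.

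Next comes the case split on the maximal stochastically stable set $\mathcal M$. Each minimal invariant set of the unperturbed dynamics is either a singleton $\{\x^*\}$, in which case $\x^*$ is an equilibrium, or a non-singleton set, and $\mathcal M$ is a non-empty union of such sets. If every minimal invariant set contained in $\mathcal M$ is non-singleton, then $\mathcal M$ is a non-empty union of non-singleton minimal invariant sets, which is the first alternative. Otherwise $\mathcal M$ contains a singleton $\{\x^*\}$, so $\x^*$ is an equilibrium with $\bm{\mu}^*(\x^*)>0$, i.e., a stochastically stable equilibrium. Then the set of stochastically stable equilibria is non-empty, and by \cref{thm:stochastically_stable_equilibria} it includes an extreme equilibrium $\y$; since $\{\y\}$ is a recurrent class of $P$ with $\bm{\mu}^*(\y)>0$, we get $\y\in\mathcal M$, which is the second alternative. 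This completes the argument.

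I do not expect a genuine obstacle: the statement is essentially a packaging of \cref{thm:stochastically_stable_equilibria} with the recurrent-class description of stochastic stability. The only points requiring care are the bookkeeping in the first step — confirming that the two named states are the only possible corresponding extreme states of mixed equilibria, so that the corollary's hypothesis really does imply the theorem's — and being explicit that singleton recurrent classes of $P$ coincide with equilibria, so that ``a singleton component of $\mathcal M$ that is an equilibrium'' and ``a stochastically stable equilibrium'' are the same notion.
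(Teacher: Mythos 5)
Your proposal is correct and follows essentially the same route as the paper's own proof: verify that the corollary's hypothesis makes the corresponding extreme state of every mixed equilibrium an equilibrium, then split on whether the maximal stochastically stable set contains an equilibrium, invoking \cref{thm:stochastically_stable_equilibria} in the latter case. The extra bookkeeping you add (identifying singleton recurrent classes with equilibria and checking the two named states exhaust the possible corresponding extreme states) is implicit in the paper's terser argument but not a different approach.
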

\begin{proof}
By assumption, the corresponding extreme state of each mixed equilibrium of the population dynamics is an equilibrium. If the maximal stochastically stable set is empty of equilibria, it is a non-empty union of the non-singleton minimal invariant sets of the unperturbed dynamics. Otherwise, by~\cref{thm:stochastically_stable_equilibria}, at least one of the extreme equilibria is stochastically stable.
\end{proof}
\begin{remark}
Some facts justify our choices in~\cref{def:corresponding_extreme} for assigning an extreme state to each mixed equilibrium. For instance, $(0, 0, 0, 0)$ and $(m^a, n^a, m^c, n^c)$ are not ``in the neighborhood'' of any mixed equilibrium. Moreover, assuming that $(x_1^I, n^a, x_2^I, 0)$ is a mixed equilibrium, if $(m^a, n^a, m^c, 0)$ is an extreme equilibrium, then so is $(0, n^a, 0, 0)$ (\cref{fig:schematic_utility_lines}). Hence, assigning $(m^a, n^a, m^c, 0)$ to $(x_1^I, n^a, x_2^I, 0)$ as its corresponding extreme state would yield a weaker result compared to~\cref{thm:stochastically_stable_equilibria}.
\end{remark}

\subsubsection{Examples}\label{subsec:SS_examples}
The following three examples illustrate some of the possible configurations of the maximal stochastically stable set for mixed binary-type population dynamics satisfying the assumptions of~\cref{thm:stochastically_stable_equilibria}.

\begin{example}{\emph{[A single extreme equilibrium]}}\label{example:7_1}
    Assume that $(\tr{m^a},\tg{n^a},\tr{m^c},\tb{n^c})=(\tr{2},\tg{1},\tr{1},\tb{5})$, and the payoff matrices are set so that the utility functions are the followings:
  \begin{align*}
  &C^a(n^C) =-2n^C+\frac{51}{5},&&D^a(n^C)=5n^C-\frac{37}{2};\\
  &C^c(n^C) = \frac{33}{5}n^C-\frac{297}{10},&&D^c(n^C) = -\frac{14}{5}n^C+\frac{63}{5}. 
  \end{align*}
Hence, the tempers are $(\tg{\tau^a},\tb{\tau^c}) = (\tg{4.1}, \tb{4.5})$, and it can be shown that the system has the eight equilibria
\begin{align*}
&(\tr{0},\tg{1},\tr{0},\tb{0}), (\tr{1},\tg{1},\tr{1},\tb{0}),(\tr{2},\tg{1},\tr{0},\tb{0}),(\tr{2},\tg{1},\tr{1},\tb{0}),\\
&(\tr{0},\tg{0},\tr{0},\tb{5}), (\tr{1},\tg{0},\tr{1},\tb{5}),(\tr{2},\tg{0},\tr{0},\tb{5}),(\tr{2},\tg{0},\tr{1},\tb{5}),
\end{align*}    
and no non-singleton minimal invariant set. From~\cref{thm:stochastically_stable_equilibria} and the fact that the maximal stochastically stable set is nonempty, we know that at least one of $(\tr{0},\tg{1},\tr{0},\tb{0})$ or $(\tr{0},\tg{0},\tr{0},\tb{5})$ must be stochastically stable.  We claim that $(\tr{0},\tg{1},\tr{0},\tb{0})$ is the only stochastically stable state.
\begin{figure}[!h]
\centering
\includegraphics[trim ={2.3cm 3.7cm 2.3cm 3.3cm}, clip, width=.7\textwidth]{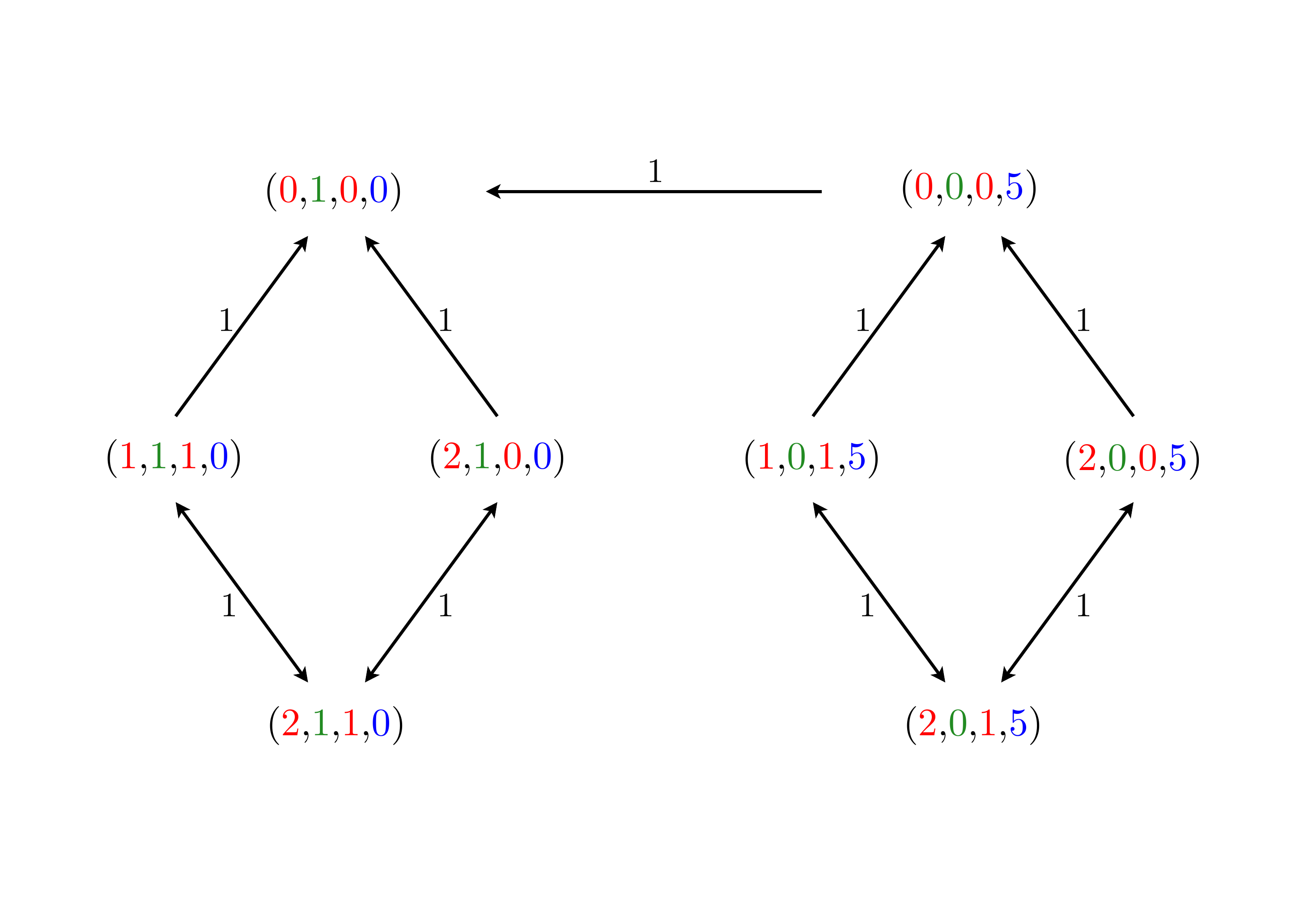}
\caption{Costs of some shortest paths between equilibrium points of the system in~\cref{example:7_1}.}
\label{fig:example_7_1}
\end{figure}

\cref{fig:example_7_1} shows a weighted digraph that is a subgraph of $\mathcal G(P^\varepsilon)$, introduced in~\cref{subsec:preliminaries}. Note that $\x\stackrel{c}{\longleftrightarrow}\y$ represents $\x\stackrel{c}{\longrightarrow}\y$ and $\y\stackrel{c}{\longrightarrow}\x$ simultaneously. To verify $(\tr{0},\tg{0},\tr{0},\tb{5})\stackrel{1}{\longrightarrow}(\tr{0},\tg{1},\tr{0},\tb{0})$, consider the following path:
\begin{align*}
(\tr{0},\tg{0},\tr{0},\tb{5})\stackrel{1}{\to}(\tr{0},\tg{0},\tr{0},\tb{4})\stackrel{0}{\to}(\tr{0},\tg{0},\tr{0},\tb{3})\stackrel{0}{\to}(\tr{0},\tg{0}&,\tr{0},\tb{2})\\
&\hspace*{-.13cm}\downarrow{\scriptstyle{0}}\\
(\tr{0},\tg{0}&,\tr{0},\tb{1})\stackrel{0}{\to}(\tr{0},\tg{0},\tr{0},\tb{0})\stackrel{0}{\to}(\tr{0},\tg{1},\tr{0},\tb{0}).\label{eq:example_7_1_3}
\end{align*}
Numbers on top of or next to arrows denote the cost of the corresponding transition. The first transition needs exactly one mistake by an active conformist. Therefore,
\begin{align*}
c((\tr{0},\tg{0},\tr{0},\tb{5}),(\tr{0},\tg{1},\tr{0},\tb{0}))=1.
\end{align*}
Other costs in~\cref{fig:example_7_1} are straightforward to calculate. 

We show that $R((\tr{0},\tg{1},\tr{0},\tb{0}))>1$. Indeed, setting $\x = (\tr{0},\tg{1},\tr{0},\tb{0})$, we show that for a state $\y$, if $c(\x,\y)= 1$, then $\y\in \mathcal D(\x)$. So starting from $\x$, more than one tremble is needed to leave $\mathcal D(\x)$. Suppose there is a path of cost $1$ from $\x$ to a state $\y$.  We assume, without loss of generality, that the second state on this path is not $\x$ itself. Hence, the first step needs exactly one mistake, while other steps must be costless. Therefore, the second state would be one of the following points:
\begin{equation*}
(\tr{1},\tg{1},\tr{0},\tb{0}), (\tr{0},\tg{0},\tr{0},\tb{0}), (\tr{0},\tg{1},\tr{1},\tb{0}), (\tr{0},\tg{1},\tr{0},\tb{1}).
\end{equation*}
It is straightforward to see that each of these states and consequently $\y$ belong to $\mathcal D(\x)$ (for instance, activation of any agent at $(\tr{1},\tg{1},\tr{0},\tb{0})$ leads to either the same state or $\x$ under the unperturbed dynamics). Now, by~\cref{lem:c_instead_of_c*}, $(\tr{0},\tg{0},\tr{0},\tb{5})$ is not stochastically stable, since 
\begin{align*}
R((\tr{0},\tg{1},\tr{0},\tb{0})) > c((\tr{0},\tg{0},\tr{0},\tb{5}), (\tr{0},\tg{1},\tr{0},\tb{0})).
\end{align*}
Moreover, the weighted edge $(\tr{0},\tg{0},\tr{0},\tb{5})\stackrel{1}{\longrightarrow}(\tr{0},\tg{1},\tr{0},\tb{0})$ implies $R((\tr{0},\tg{0},\tr{0},\tb{5})) \leq1$. But it is impossible to leave an equilibrium without bearing a positive cost. Hence, $R((\tr{0},\tg{0},\tr{0},\tb{5})) = 1$. So, by~\cref{lem:c_instead_of_c*}, stochastic stability of $(\tr{1},\tg{0},\tr{1},\tb{5})$ results in stochastic stability of $(\tr{0},\tg{0},\tr{0},\tb{5})$, which we rejected earlier. Thus $(\tr{1},\tg{0},\tr{1},\tb{5})$ is not stochastically stable. 

Similar arguments show that neither of $(\tr{2},\tg{0},\tr{0},\tb{5})$, $(\tr{2},\tg{0},\tr{1},\tb{5})$, $(\tr{2},\tg{1},\tr{0},\tb{0})$, $(\tr{1},\tg{1},\tr{1},\tb{0})$, nor $(\tr{2},\tg{1},\tr{1},\tb{0})$ is stochastically stable. Since the maximal stochastically stable set is non-empty, its only element is $(\tr{0},\tg{1},\tr{0},\tb{0})$.
\end{example}

\begin{example}{\emph{[No equilibria]}}\label{example:7_2}
  Suppose $(\tr{m^a},\tg{n^a},\tr{m^c},\tb{n^c})=(\tr{2},\tg{1},\tr{2},\tb{3})$, and the utility functions are as follows:
  \begin{align*}
  &C^a(n^C) =6,&&D^a(n^C) = 3n^C-\frac{81}{10};\\
  &C^c(n^C) = 20n^C-122,&&D^c(n^C) = -\frac{60}{41}n^C+\frac{366}{41}. 
  \end{align*}
  It follows that $(\tg{\tau^a},\tb{\tau^c}) = (\tg{4.7}, \tb{6.1})$. All equilibrium states of the dynamics together with its only non-singleton minimal invariant set, $\Omega$, are shown in~\cref{fig:example_7_2}. We show that $R(\Omega)>1$. By~\cref{lem:c_instead_of_c*}, this inequality in addition to the given costs in~\cref{fig:example_7_2}, implies that the maximal stochastically stable set is
 \begin{align*}
  \Omega = \left\{(\tr{2},\tg{0},\tr{2},\tb{0}),(\tr{2},\tg{1},\tr{2},\tb{0})\right\}.
 \end{align*}
  
\begin{figure}[!h]
\centering
\includegraphics[trim ={3.4cm 2.4cm 3.4cm 2.7cm}, clip, width=.7\textwidth]{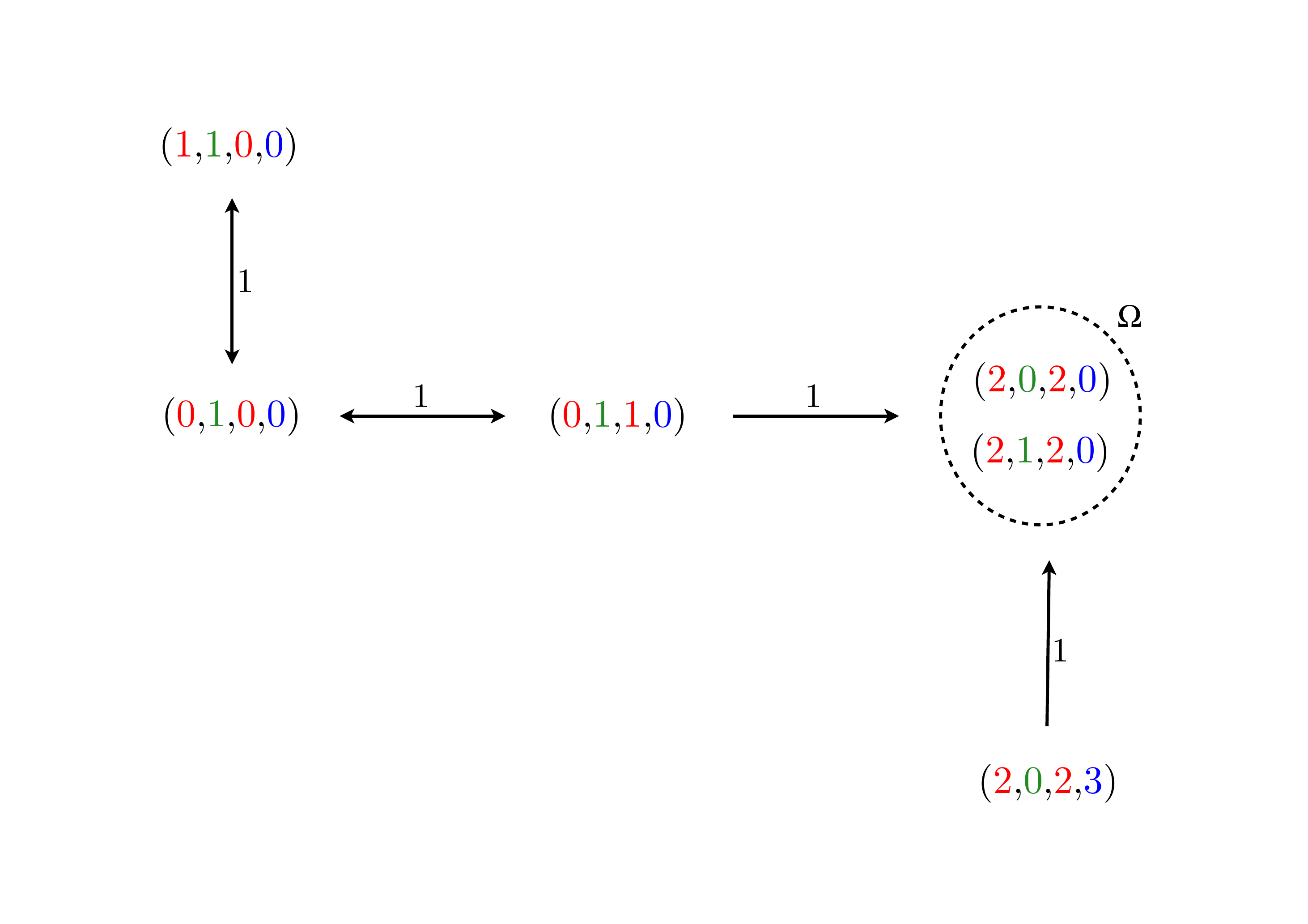}
\caption{Costs of some shortest paths between minimal invariant sets of the population dynamics of~\cref{example:7_2}.}
\label{fig:example_7_2}
\end{figure}
It is straightforward to show that 
\begin{align*}
\mathcal D(\Omega) =& \left\{(\tr{2},\tg{0},\tr{2},\tb{0}),(\tr{2},\tg{1},\tr{2},\tb{0}), (\tr{1},\tg{1},\tr{1},\tb{0}), (\tr{0},\tg{1},\tr{2},\tb{0}), (\tr{1},\tg{0},\tr{2},\tb{0}), (\tr{2},\tg{0},\tr{1},\tb{0}),\right.\\
&\ \ (\tr{2},\tg{1},\tr{0},\tb{0}), (\tr{1},\tg{1},\tr{2},\tb{0}), (\tr{2},\tg{1},\tr{1},\tb{0}), (\tr{1},\tg{1},\tr{1},\tb{1}), (\tr{0},\tg{1},\tr{2},\tb{1}),(\tr{1},\tg{0},\tr{2},\tb{1}),\\
&\ \ \!\!\left.(\tr{2},\tg{1},\tr{0},\tb{1}), (\tr{2},\tg{0},\tr{1},\tb{1}), (\tr{1},\tg{1},\tr{2},\tb{1}), (\tr{2},\tg{1},\tr{1},\tb{1}), (\tr{2},\tg{0},\tr{2},\tb{1}), (\tr{2},\tg{1},\tr{2},\tb{1})\right\},
\end{align*}
and that
\begin{align*}
\{\x: c(\Omega,\x) = 1\} = &\ \mathcal D(\Omega)\setminus\Omega\\
\subseteq &\ \mathcal D(\Omega).
\end{align*}
Hence, $R(\Omega) >1$.
\end{example}
\begin{example}{\emph{[Union of all minimal invariant sets]}}\label{example:7_3}
    Suppose $(\tr{m^a},\tg{n^a},\tr{m^c},\tb{n^c})=(\tr{1},\tg{2},\tr{2},\tb{4})$, and the utility functions are
 \begin{align*}
  &C^a(n^C) =-n^C+\frac{109}{10},&&D^a(n^C)=3n^C-\frac{87}{10};\\
  &C^c(n^C) = \frac{105}{31}n^C-\frac{693}{62},&&D^c(n^C) = -\frac{79}{3}n^C+\frac{869}{10}. 
  \end{align*}
As a result, we have $(\tg{\tau^a},\tb{\tau^c}) = (\tg{4.9}, \tb{3.3})$. All equilibria of the dynamics and its unique minimal invariant set of multiple elements, $\Omega$, are depicted in~\cref{fig:example_7_3}.
\begin{figure}[!h]
\centering
\includegraphics[trim ={3cm 4cm 3cm 4cm}, clip, width=.7\textwidth]{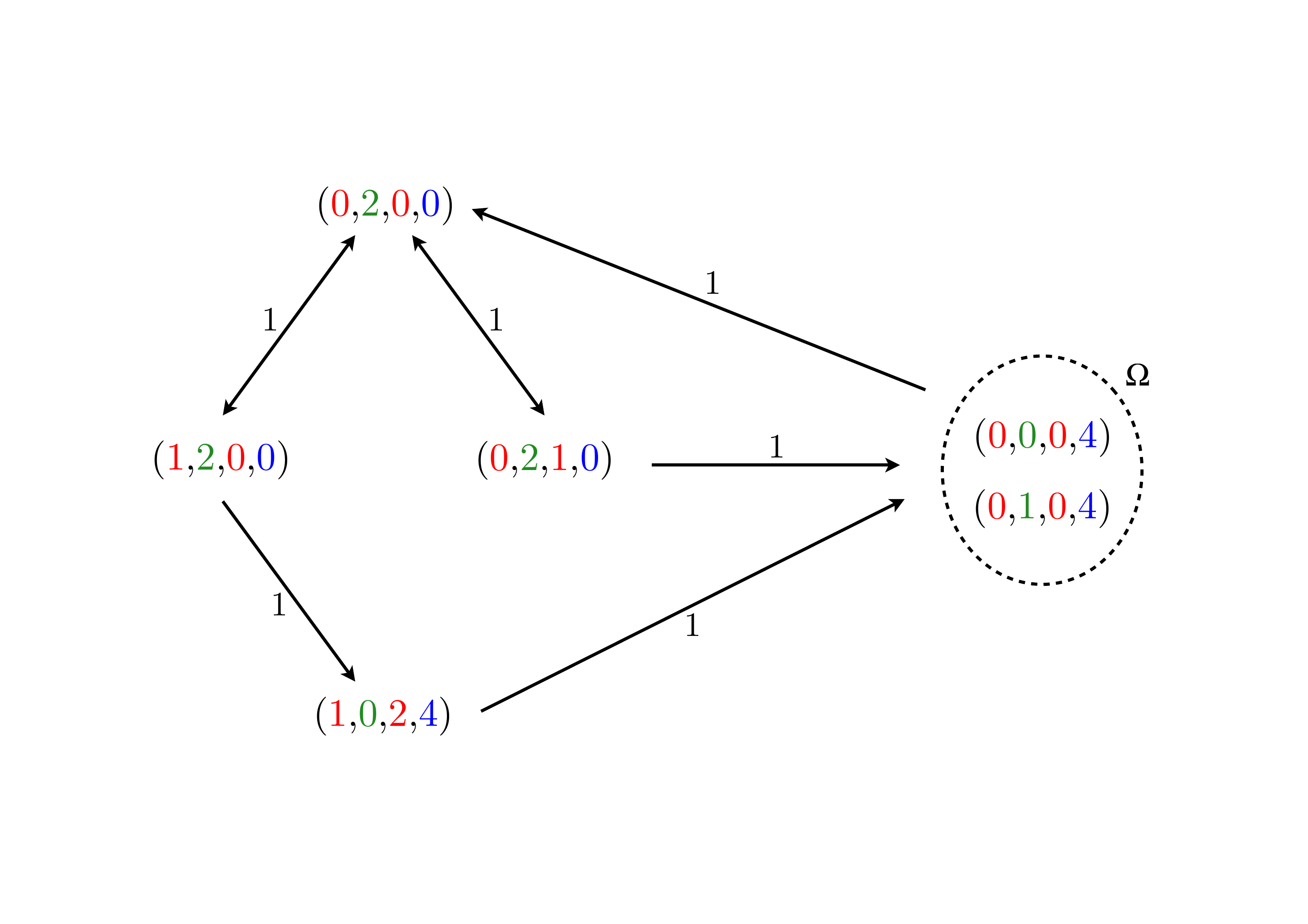}
\caption{Costs of some shortest paths between minimal invariant sets of the population dynamics of~\cref{example:7_3}.}
\label{fig:example_7_3}
\end{figure}
We prove that the maximal stochastically stable set is the union of all five minimal invariant sets. Indeed, assuming the costs in~\cref{fig:example_7_3} and by multiple usage of~\cref{lem:c_instead_of_c*}, we deduce that stochastic stability of each minimal invariant set results in the stochastic stability of the other four sets. For instance, if $(\tr{0},\tg{2},\tr{0},\tb{0})$ is stochastically stable, then so is each of the two mixed equilibria. By $(\tr{1},\tg{0},\tr{2},\tb{4})\stackrel{1}{\longrightarrow}\Omega$, we have $R((\tr{1},\tg{0},\tr{2},\tb{4})) = 1$. So because of the edge $(\tr{1},\tg{2},\tr{0},\tb{0})\stackrel{1}{\longrightarrow}(\tr{1},\tg{0},\tr{2},\tb{4})$, stochastic stability of $(\tr{1},\tg{2},\tr{0},\tb{0})$ yields stochastic stability of $(\tr{1},\tg{0},\tr{2},\tb{4})$, which in turn, by a similar argument yields stochastic stability of $\Omega$. 

Finally, we provide some minimum cost paths corresponding to the less trivial costs in~\cref{fig:example_7_3}:
\begin{align*}
c((\tr{0},\tg{2},\tr{1},\tb{0}),\Omega) = 1:\hspace*{4cm}&\\
(\tr{0},\tg{2},\tr{1},\tb{0})\stackrel{1}{\to}(\tr{0},\tg{2},\tr{1},\tb{1})\stackrel{0}{\to}(\tr{0},\tg{2},\tr{1},\tb{2})\stackrel{0}{\to}(\tr{0},\tg{2}&,\tr{1},\tb{3})\\
&\hspace*{-.13cm}\downarrow{\!\scriptstyle0}\\
(\tr{0},\tg{2}&,\tr{1},\tb{4})\stackrel{0}{\to}
(\tr{0},\tg{1},\tr{1},\tb{4})\stackrel{0}{\to}(\tr{0},\tg{1},\tr{0},\tb{4})\in\Omega.
\end{align*}

\begin{align*}
c((\tr{1},\tg{2},\tr{0},\tb{0}),(\tr{1},\tg{0},\tr{2},\tb{4})) = 1:\hspace*{3.2cm}&\\
(\tr{1},\tg{2},\tr{0},\tb{0})\stackrel{1}{\to}(\tr{1},\tg{2},\tr{0},\tb{1})\stackrel{0}{\to}(\tr{1},\tg{2},\tr{0},\tb{2})\stackrel{0}{\to}(\tr{1},\tg{2}&,\tr{0},\tb{3})\\
&\hspace*{-.13cm}\downarrow{\!\scriptstyle0}\\
(\tr{1},\tg{2}&,\tr{0},\tb{4})\\
&\hspace*{-.13cm}\downarrow{\!\scriptstyle0}\\
(\tr{1},\tg{2}&,\tr{1},\tb{4})\\
&\hspace*{-.13cm}\downarrow{\!\scriptstyle0}\\
(\tr{1},\tg{2}&,\tr{2},\tb{4})\stackrel{0}{\to}(\tr{1},\tg{1},\tr{2},\tb{4})\stackrel{0}{\to}(\tr{1},\tg{0},\tr{2},\tb{4}).
\end{align*}
\begin{flalign*}
\ \, c((\tr{1},\tg{0},\tr{2},\tb{4})&,\Omega) = 1:&&\\
&\hspace{.7cm}(\tr{1},\tg{0},\tr{2},\tb{4})\stackrel{1}{\to}(\tr{0},\tg{0},\tr{2},\tb{4})\stackrel{0}{\to}(\tr{0},\tg{0},\tr{1},\tb{4})\stackrel{0}{\to}(\tr{0},\tg{0},\tr{0},\tb{4})\in\Omega.&&
\end{flalign*}
\end{example}

The following example demonstrates that~\cref{thm:stochastically_stable_equilibria} cannot be generalized to all mixed binary-type populations.
\begin{example}\label{example:7_4}
  Consider a population with $(\tr{m^a},\tg{n^a},\tr{m^c},\tb{n^c})=(\tr{2},\tg{1},\tr{2},\tb{3})$. The payoff matrices are set to result in the following utility functions (\cref{fig:example_7_4}):
%
  \begin{figure}[!h]
    \centering
    \includegraphics[trim ={2.1cm 6cm 2cm 7.2cm}, clip, width=.7\textwidth, height = .35\textheight]{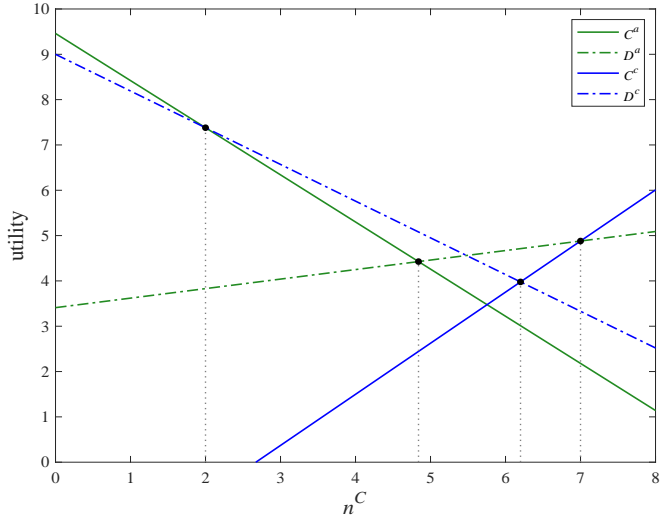}
    \caption{Utility functions of the conformists and nonconformists in~\cref{example:7_4}.}
    \label{fig:example_7_4}
\end{figure}
    \begin{align*}
        &C^a(n^C) = -\frac{26}{25} n^C+\frac{473}{50}, &&D^a(n^C) = \frac{21}{100} n^C+\frac{341}{100};\\
        &C^c(n^C) = \frac{451}{400} n^C-\frac{241}{80}, &&D^c(n^C) = -\frac{81}{100} n^C +9.
    \end{align*}
    Therefore, the tempers are $(\tg{\tau^a},\tb{\tau^c}) = (\tg{4.84}, \tb{6.2})$. The dynamics have three equilibria
    \begin{equation*}
\x = (\tr{1},\tg{1},\tr{0},\tb{0}),\quad \y=(\tr{0},\tg{1},\tr{1},\tb{0}),\quad \z=(\tr{2},\tg{0},\tr{2},\tb{3}).
\end{equation*}
    It is straightforward to see that there is no other minimal invariant set. Indeed, starting from each of the $72$ different states, we can find an activation sequence leading to one of the equilibria under the unperturbed dynamics. We estimate the minimum cost of transition from each of these equilibria to the others. 

\emph{Case 1:} Transition between two mixed equilibria. Consider the following path from $\x$ to $\y$:
\begin{align*}
\x\to(\tr{0},\tg{1},\tr{0},\tb{0})\to\y.
\end{align*}
The cost of the first step equals $1$ as it requires an anticoordinating imitator that is cooperating to become active and make a mistake. However, the second step is costless, for at $(\tr{0},\tg{1},\tr{0},\tb{0})$, cooperation is the optimal strategy for imitators (\cref{fig:example_7_4}). Hence, $c(\x, \y)=1$. Similarly, $c(\y,\x)=1$.

\emph{Case 2:} Transition between a mixed and an extreme equilibrium. Consider the following path from $\z$ to $\x$:
\begin{align*}
\z\stackrel{1}{\to}(\tr{1},\tg{0},\tr{2},\tb{3})\stackrel{0}{\to}(\tr{1},\tg{0},\tr{2},\tb{2})\stackrel{0}{\to}(\tr{1},\tg{0}&,\tr{2},\tb{1})\\
&\hspace*{-.13cm}\downarrow{\!\scriptstyle0}\\
(\tr{1},\tg{0}&,\tr{2},\tb{0})\stackrel{0}{\to}
(\tr{1},\tg{0},\tr{1},\tb{0})\stackrel{0}{\to}(\tr{1},\tg{1},\tr{1},\tb{0})\stackrel{0}{\to}\x.
\end{align*}
Since at $\z$, cooperation and defection are both optimal, the first step of this path takes place only if an anticoordinating imitator that is cooperating becomes active and trembles. All the later steps are costless (steps 5 and 7 are costless because of the optimality of defecting at their initial states). Hence, $c(\z,\x)=1$. Similarly, it can be shown that $c(\z,\y)=1$. We show that $c(\x,\z)\geq 2$. Suppose not. Then there is a path of cost $1$ from $\x$ to $\z$.  We assume, without loss of generality, that the second state on this path is not $\x$ itself. Hence, the first step needs exactly one mistake, while other steps are all costless. The second state would be one of the following points:
\begin{equation*}
(\tr{0},\tg{1},\tr{0},\tb{0}), (\tr{2},\tg{1},\tr{0},\tb{0}), (\tr{1},\tg{0},\tr{0},\tb{0}), (\tr{1},\tg{1},\tr{1},\tb{0}), (\tr{1},\tg{1},\tr{0},\tb{1}).
\end{equation*}
However, no costless path starting from one of these states leads to $\z$. To see this, again without loss of generality, we limit ourselves to paths without the same consecutive states. If the initial state is either $(\tr{0},\tg{1},\tr{0},\tb{0})$ or $(\tr{1},\tg{1},\tr{1},\tb{0})$, the second step would be one of $\x$ and $\y$. These are equilibrium states, and leaving them is impossible unless the active agent trembles. Starting from $ (\tr{2},\tg{1},\tr{0},\tb{0})$, the next state is $\x$. A costless path with the initial state $(\tr{1},\tg{0},\tr{0},\tb{0})$, has either $\x$, or $(\tr{1},\tg{0},\tr{1},\tb{0})$ as the second state. In the latter case, the third step is $(\tr{1},\tg{1},\tr{1},\tb{0})$, which was proved to lead to $\x$ or $\y$. Finally, if the path starts at $(\tr{1},\tg{1},\tr{0},\tb{1})$, in one step it goes to one of $\x$ and $(\tr{0},\tg{1},\tr{0},\tb{1})$. In the latter case, the next step will be $(\tr{0},\tg{1},\tr{0},\tb{0})$ that was investigated before. Similarly, it can be shown that $c(\y,\z)\geq 2$.

Case 1 together with Case 2 implies (see~\eqref{eq:gamma})
\begin{align*}
\gamma(\x)=\gamma(\y)=2\qquad\text{and} \qquad \gamma(\z)\geq 3.
\end{align*}
According to~\cref{prop:young_class_stability}, the maximal stochastically stable set is  $\{\x,\y\}$, which is empty of extreme equilibria.
\end{example}

\section{Concluding Remarks}    \label{sec_concludingRemarks}
We have studied mixed populations of imitators, conformists and nonconformists who asynchronously decide between cooperation and defection over time, resulting in the population dynamics. 
We have found the necessary and sufficient condition for a population state to be an equilibrium.
The equilibria take a similar structure to those of exclusive populations of conformists and exclusive populations of nonconformists.
That is, one can find an anticoordinating benchmark type $j_1$ and a coordinating benchmark type $j'_1$ such that at equilibrium, all nonconformists with thresholds higher than or equal to that of $j_1$ and conformists with thresholds lower than or equal to that of $j'_1$ cooperate and all other best-responders defect \cite{ramazi2018asynchronous, ramazi2020convergence}. The structure is also similar to those of exclusive populations of imitators, where either all imitators cooperate, all defect, or there are both a cooperating and defecting highest earner in the population. 

The stability results imply that in general populations, the extreme equilibria where the imitators either all cooperate or all defect are most likely the only equilibria that are stable against changes in strategies. Furthermore, the stochastic stability results imply that at least in the binary-type case, if the extreme state corresponding to each mixed equilibrium is an equilibrium, then at least one extreme equilibrium is stochastically stable; unless no other equilibrium is stochastically stable.
That is, under the stated condition, if the population dynamics are slightly noisy, where individuals, with a low frequency, play the opposite strategy than what their update rules imply, an extreme equilibrium will be visited infinitely often by the solution trajectory, and the frequency of this visitation does not vanish as the noise converges to zero.
These results highlight the robustness of extreme equilibria.

We have partially characterized the minimal invariant sets by finding necessary conditions. 
We have established the existence of anticoordinating benchmark types $j_1, j_2$ and coordinating benchmark types $j'_1,j'_2$ such that at the minimal invariant set, all nonconformists with thresholds higher than that of $j_1$ and all conformists with thresholds lower than that of $j'_1$ cooperate and all nonconformists with thresholds lower than that of $j_2$ and all conformists with thresholds higher than that of $j'_2$ defect.
If the minimum and maximum numbers of cooperators in the minimal  invariant set are known, then we know the benchmarks exactly, i.e., which agents will fix their strategies in the long run and which others will keep wandering. 
Particularly, the wandering conformists will infinitely often all simultaneously cooperate and infinitely often all simultaneously defect, when the solution trajectory enters the invariant set. 
The wandering nonconformists are less harmonious and seem to be the key in finding the extremum number of cooperators in the minimal  invariant set, 
which in turn may lead to an explicit formulation of the set and is left as future work.

 \section*{Acknowledgments}
  This work was funded in part by Alberta Environment and Parks.
 We would like to thank Prof. Russell Greiner for a generous funding and Prof. John Bowman for his support. 

\bibliographystyle{siamplain}
\bibliography{references}

\appendix
\section{Modified cost function and proof of~\cref{lem:c_instead_of_c*}}\label{app:modified_cost}
The proof of~\cref{lem:c_instead_of_c*}, as mentioned before, is based on~\cite[Theorem 3]{ellison2000basins}. Indeed, the two propositions are almost the same, with the distinction that in~\cite[Theorem 3]{ellison2000basins} the cost function, defined in~\cref{subsec:preliminaries}, is replaced with its modified version. This modification is done by Ellison as follows.

For an arbitrary state $\x$ and a recurrent class $\Omega$, consider the path $(\z_1\ldots,\z_T)\in\Pi(\x,\Omega)$, and suppose that $\Omega_{i_1}$,  $\Omega_{i_2}$, $\ldots$, $\Omega_{i_r}=\Omega$ is the sequence of recurrent classes through which the path passes consecutively (with the convention that a recurrent class can appear on the list multiple times but not successively). We define $c^*$ by 
 \begin{align}\label{eq:path_modified_cost}
 c^*(\z_1,\ldots,\z_T) = c(\z_1,\ldots,\z_T) - \sum_{j=2}^{r-1}R(\Omega_{i_j}),
 \end{align}
and set 
\begin{align*}
c^*(\x,\Omega) = \min_{(\z_1,\ldots,\z_T)\in\Pi(\x,\Omega)}c(\z_1,\ldots,\z_T).
\end{align*}
 
 For us, in spite of~\cite{ellison2000basins}, $c^*$ is not an essential object. So we will not try to justify its definition, and refer the reader to~\cite{ellison2000basins}. However, we provide an evolutionary intuition given in the same paper. As Ellison explains, the given modification of $c$ ``formalizes the observation that large evolutionary changes will occur more rapidly if it is possible for the change to be effected via a series of more gradual steps between nearly stable states''. 
 
Ellison proves the following result \cite[Theorem 3]{ellison2000basins}. 
 \begin{proposition}\label{prop:modified_cost}
 Let $P^\varepsilon$ be a regular perturbation of $P$.  For a recurrent class $\Omega$ of $P$, and a state $\x\not\in\Omega$, if we have $R(\Omega)>c^*(\x,\Omega)$, then $\bm{\mu}^*(\x)=0$. If $R(\Omega)=c^*(\x,\Omega)$, then $\bm{\mu}^*(\x)>0$ implies $\bm{\mu}^*(\Omega)>0$.
 \end{proposition}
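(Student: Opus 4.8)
The plan is to reduce the statement to a comparison of minimum rooted-tree weights among the recurrent classes and then to settle that comparison by a tree-rerouting argument in the class-digraph $\mathcal G(P^\varepsilon)$. First I would dispose of the case in which $\x$ lies in no recurrent class: by \cite[Theorem 4]{young1993evolution} the support of $\bm{\mu}^*$ is a union of recurrent classes, so $\bm{\mu}^*(\x)=0$ and both assertions hold trivially. Hence I may assume $\x\in\Omega'$ for some recurrent class $\Omega'\neq\Omega$. Since any two states of a recurrent class are joined by a zero-cost path, $c^*(\x,\Omega)$ is independent of the choice of $\x\in\Omega'$, and I write it $c^*(\Omega',\Omega)$. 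By \cref{prop:young_class_stability}, $\bm{\mu}^*(\x)>0$ is equivalent to $\gamma(\Omega')=\min_j\gamma(\Omega_j)$, and $\bm{\mu}^*(\Omega)>0$ to $\gamma(\Omega)=\min_j\gamma(\Omega_j)$. Thus both conclusions follow once I establish the single inequality
\begin{equation*}
\gamma(\Omega)\;\leq\;\gamma(\Omega')-R(\Omega)+c^*(\Omega',\Omega).
\end{equation*}

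The central step is to prove this inequality by rerouting a minimum $\Omega'$-tree into an $\Omega$-tree. I would first record the auxiliary fact that $c(\Lambda,\Lambda')\geq R(\Lambda)$ for any two distinct recurrent classes: the basin $\mathcal D(\Lambda)$ contains $\Lambda$ as its only recurrent class, so every path from $\Lambda$ to $\Lambda'$ must leave $\mathcal D(\Lambda)$ and therefore costs at least $R(\Lambda)=c(\Lambda,\X\setminus\mathcal D(\Lambda))$. Now let $T$ be a minimum $\Omega'$-tree, of weight $\gamma(\Omega')$, and let a path realizing $c^*(\Omega',\Omega)$ visit the recurrent classes $\Omega'=\Omega_{i_1},\Omega_{i_2},\ldots,\Omega_{i_r}=\Omega$ in order. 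I would add to $T$ the class-edges $\Omega_{i_1}\to\Omega_{i_2}\to\cdots\to\Omega_{i_r}$, whose total weight $\sum_{j=1}^{r-1}c(\Omega_{i_j},\Omega_{i_{j+1}})$ is at most the $c$-cost of the realizing path, and delete the now-redundant outgoing edges of $\Omega$ and of the strictly intermediate classes $\Omega_{i_2},\ldots,\Omega_{i_{r-1}}$. Bounding each deleted edge below by the corresponding radius via the auxiliary fact, and invoking the definition \eqref{eq:path_modified_cost} of the modified cost, the weight of the resulting object is at most
\begin{equation*}
\gamma(\Omega')+\sum_{j=1}^{r-1}c(\Omega_{i_j},\Omega_{i_{j+1}})-\sum_{j=2}^{r-1}R(\Omega_{i_j})-R(\Omega)\;\leq\;\gamma(\Omega')+c^*(\Omega',\Omega)-R(\Omega).
\end{equation*}
Since this object is an $\Omega$-tree, its weight is at least $\gamma(\Omega)$, which yields the desired inequality.

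With the inequality in hand the two conclusions are immediate. If $R(\Omega)>c^*(\x,\Omega)$, then $\gamma(\Omega)<\gamma(\Omega')$, so $\Omega'\notin\argmin_j\gamma(\Omega_j)$; hence $\Omega'$ is not stochastically stable and $\bm{\mu}^*(\x)=0$. If instead $R(\Omega)=c^*(\x,\Omega)$, then $\gamma(\Omega)\leq\gamma(\Omega')$; assuming $\bm{\mu}^*(\x)>0$ forces $\gamma(\Omega')=\min_j\gamma(\Omega_j)$, whence $\gamma(\Omega)=\min_j\gamma(\Omega_j)$ as well, i.e.\ $\bm{\mu}^*(\Omega)>0$.

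I expect the main obstacle to lie not in the cost accounting but in verifying that the rerouted subgraph is genuinely an $\Omega$-tree. The realizing path may re-enter recurrent classes or run along edges already present in $T$, so the deletion of the redundant outgoing edges must be arranged so that no directed cycle is created and every class other than $\Omega$ retains exactly one outgoing edge whose iterates reach $\Omega$. Controlling this requires the convention, built into \eqref{eq:path_modified_cost}, that each recurrent class occurs at most once and never successively along the path, together with the standard Freidlin--Wentzell tree surgery; it is precisely this combinatorial step that carries the weight of \cite[Theorem 3]{ellison2000basins}.
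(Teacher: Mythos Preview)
The paper does not prove this proposition at all: it is stated with the attribution ``Ellison proves the following result \cite[Theorem 3]{ellison2000basins}'' and then used as a black box to derive \cref{lem:c_instead_of_c*}. Your proposal therefore goes well beyond what the paper does, by attempting to reconstruct Ellison's argument itself.

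As a sketch of that argument your architecture is correct: dispose of transient $\x$ via \cite[Theorem 4]{young1993evolution}, reduce to the inequality $\gamma(\Omega)\leq\gamma(\Omega')-R(\Omega)+c^*(\Omega',\Omega)$ via \cref{prop:young_class_stability}, and prove the inequality by rerouting a minimum $\Omega'$-tree along the class-sequence of an optimal path. The auxiliary bound $c(\Lambda,\Lambda')\geq R(\Lambda)$ and the cost bookkeeping are right, and you correctly identify the verification that the rerouted graph is an $\Omega$-tree as the load-bearing combinatorial step.

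One correction, though. You write that the convention in \eqref{eq:path_modified_cost} is ``that each recurrent class occurs at most once and never successively along the path''. The paper states the opposite: a recurrent class \emph{can} appear multiple times, just not successively. This matters for your surgery, since with repetitions you would try to delete the same outgoing edge more than once while the subtracted sum $\sum_{j=2}^{r-1}R(\Omega_{i_j})$ counts the corresponding radius each time. The standard repair is to observe that an optimal path realizing $c^*(\Omega',\Omega)$ can always be short-circuited so that each intermediate class is visited exactly once without increasing the modified cost (the excised loop has $c$-cost at least the sum of the radii it contributes, by your own auxiliary bound, and the boundary terms cancel). With that preprocessing your tree surgery goes through as written.
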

 
\subsection{Proof of~\cref{lem:c_instead_of_c*}}
\begin{proof}
According to~\eqref{eq:c_dominance}, for a state $\x$ and a recurrent class $\Omega$, we have 
\begin{align}\label{eq:c_dominance}
c(\x,\Omega)\geq c^*(\x,\Omega).
\end{align}
Hence, assuming $\x\not\in\Omega$, if $R(\Omega)>c(\x,\Omega)$, it is concluded that $R(\Omega)>c^*(\x,\Omega)$, and~\cref{prop:modified_cost} yields $\mu^*(\x) = 0$. On the other hand, If $R(\Omega) = c(\x,\Omega)$ and $\mu^*(\x) >0$, then, by the first part of~\cref{prop:modified_cost}, in~\eqref{eq:c_dominance} the equality must hold. So $R(\Omega) = c^*(\x,\Omega)$, and according to the second part of~\cref{prop:modified_cost}, it holds that $\mu^*(\Omega) >0$.
\end{proof}

\end{document}